\documentclass[12pt]{amsart}
\usepackage{amsthm}
\usepackage[utf8]{inputenc}
\usepackage{appendix}
\usepackage{hyperref}
\usepackage{graphicx}
\usepackage{cite}
\usepackage{amssymb,amsmath,amsthm}
\usepackage{hyperref}
\usepackage[dvipsnames]{xcolor}
\usepackage{url}



\addtolength{\oddsidemargin}{-.800in}
	\addtolength{\evensidemargin}{-.800in}
	\addtolength{\textwidth}{1.25in}

	\addtolength{\topmargin}{-.875in}
	\addtolength{\textheight}{1.75in}

\newtheorem{theorem}{Theorem}[section]
\theoremstyle{plain}
\newtheorem{prop}[theorem]{Proposition}
\newtheorem{corollary}[theorem]{Corollary}
\newtheorem{lemma}[theorem]{Lemma}

\theoremstyle{definition}

\newtheorem{remark}{Remark}[section]
\theoremstyle{remark}

\def\rn{M}

\def\n{\mathbb{N}}

\def\o{\Omega}

\def\hat{\widehat}
\def\tilde{\widetilde}

\def\cC{\mathcal{C}}

\def\cH{\mathcal{H}}

\def\cN{\mathcal{N}}
\def\cP{\mathcal{P}}
\def\cJ{\mathcal{J}}
\def\cT{\mathcal{T}}
\def\cU{\mathcal{U}}

\def\bf{\mathbf}
\def\N{\mathbb{N}}
\def\R{\mathbb{R}}

\def\cP{\mathcal{P}}

\def\cC{\mathcal{C}}

\def\r{\mathbb{R}}


\newcommand{\hess}{\mathrm{Hess}}

\title[Equivariant solutions to optimal partition problem]{Equivariant solutions to the optimal partition problem for the prescribed $Q$-curvature equation}

\author[J.~C. ~FERNÁNDEZ]{Juan Carlos Fernández$^{\ast}$} 
\address[J.~C.~FERNÁNDEZ]{Facultad de Ciencias, Universidad Na\-cio\-nal Au\-tó\-no\-ma de México (UNAM), Ciudad de Mé\-xi\-co, Mé\-xi\-co.}
\email{\href{mailto:jcfmor@ciencias.unam.mx}{jcfmor@ciencias.unam.mx}}
\thanks{$^{\ast}$Partially supported by CONACYT A1-S-10457 grant and by UNAM-DGAPA-PAPIIT IN101322 grant.}

\author[O.~PALMAS]{Oscar Palmas$^{\ast\ast}$} 
\address[O.~PALMAS]{Facultad de Ciencias, Universidad Na\-cio\-nal Au\-tó\-no\-ma de México (UNAM), Ciudad de Mé\-xi\-co, Mé\-xi\-co.}
\email{\href{mailto:oscar.palmas@ciencias.unam.mx}{oscar.palmas@ciencias.unam.mx}}
\thanks{$^{\ast\ast}$ Supported by UNAM-DGAPA-PAPIIT IN101322 grant.}

\author[J.~TORRES OROZCO]{Jonatán Torres Orozco$^{\ast\ast\ast}$} 
\address[J.~TORRES OROZCO]{Facultad de Ciencias, Universidad Na\-cio\-nal Au\-tó\-no\-ma de México (UNAM), Ciudad de Mé\-xi\-co, Mé\-xi\-co.}
\email{\href{mailto:jonatan.tto@gmail.com}{jonatan.tto@gmail.com}}
\thanks{$^{\ast\ast\ast}$ Supported by a CONACyT postdoctoral fellowship (Mexico) and partially supported by UNAM-DGAPA-PAPIIT IN101322 grant.}
\date{}

\begin{document}

\maketitle


\begin{abstract}
We study the optimal partition problem for the prescribed constant $Q$-curvature equation induced by the higher order conformal operators under the effect of cohomogeneity one actions on Einstein manifolds with positive scalar curvature. This allows us to give a precise description of the solution domains and their boundaries in terms of the orbits of the action. We also prove the existence of least energy symmetric solutions to a weakly coupled elliptic system of prescribed $Q$-curvature equations under weaker assumptions and conclude a multiplicity result of sign-changing solutions to the prescribed constant $Q$-curvature problem induced by the Paneitz-Branson operator. Moreover, we study the coercivity of $GJMS$-operators on Ricci solitons, compute the $Q$-curvature of these manifolds and give a multiplicity result for the sign-changing solutions to the Yamabe problem with prescribed number of nodal domains on the Koiso-Cao Ricci soliton.  
\end{abstract}

\tableofcontents

















\section{Introduction}

A milestone in Geometric Analysis and Conformal Geometry is the Yamabe problem, which consists in finding a conformal metric on a given Riemannian manifold in order to obtain constant scalar curvature (see, for instance, the book \cite{AubinBook} or the classic survey \cite{LePa}, and the references therein). There are many generalizations of this problem. One direction is to consider higher-order operators generalizing the conformal Laplacian. For instance, a fourth-order operator on Riemannian manifolds of dimension four, which is the analogue to the bilaplacian on the Euclidean space, was discovered by S. Paneitz \cite{Paneitz1983} and later generalized to higher dimension by T.P Branson in \cite {Branson1995}, and by Branson and B. \O{}rsted in \cite{BransonOrsted1991}. A systematic construction of conformally invariant operators of higher order was given by C. R. Graham, R. Jenne, J. Mason and G. A. J. Sparling in \cite{GJMS} ($GJMS$-operators for short) and, more recently extended to fractional order conformally invariant operators by S.-Y. A. Chang and M. d. M. González in \cite{ChangGonzalez2011}. This paper deals with optimal partition problems related to equations given by GJMS-operators.

\smallskip
In order to briefly describe these operators, let $M$ be a closed (compact without boundary) smooth manifold of dimension $N$ and let $m\in\N$ such that $2m< N$.  For any Riemannian metric $g$ for $M$, there exists an operator $P_g:\mathcal{C}^\infty(M)\rightarrow \mathcal{C}^\infty(M)$, satisfying the following properties.
\begin{enumerate}
	\item[($P_1$)] $P_{g}$ is a differential operator and $P_{g}=(-\Delta_g)^m + \emph{\text{lower order terms}}$, where $\Delta=\text{div}_g(\nabla_g)$ is the Laplace Beltrami operator on $(M,g)$.
	\item[($P_2$)]\label{Propoerty:Pg naturality} $P_g$ is natural, i.e., for every diffeomorphism $\varphi:M\rightarrow M$,  $\varphi^\ast P_{g}=P_{\varphi^\ast g}\circ\varphi^\ast$, where ``$\!\!\!\phantom{K}^\ast\;$'' denotes the pullback of a tensor. 
	\item[($P_3$)] $P_{g}$ is self-adjoint with respect to the $L^2$-scalar product.
	\item[($P_4$)]\label{Property:Pg conformal invaiance} $P_g$ is conformally invariant, that is, given any function $\omega\in\mathcal{C}^\infty(M)$, if we define the conformal metric $\widetilde{g}=e^{2\omega} g$, then the following identity holds true:
	\begin{equation}\label{Eq:GJMS-conformal invariance}
		P_{\widetilde{g}}(f)=e^{-\frac{N+2m}{2}\omega}P_{g}\left(e^{\frac{N-2m}{2}\omega}f\right), \text{ for all }f\in\mathcal{C}^\infty(M).   
	\end{equation}
\end{enumerate}
\medskip

There is a natural conformal invariant associated with $P_g$ given by
\[
Q_g:=\frac{2}{N-2m}P_g(1),
\]
called the {\em Branson $Q$-curvature}, after Branson-{\O}rsted \cite{BransonOrsted1991} and Branson \cite{BransonBook,Branson1995}, or simply the {\em $Q$-curvature}. When $m=1$, $P_g$ is the conformal Laplacian and $Q_g$ is the scalar curvature $R_g$, while for $m=2$, $P_g$ is the Paneitz-Branson operator and $Q_g$ is the usual $Q$-curvature \cite{DjadliHebeyLedoux2000}. 
\newline

When considering conformal metrics $\tilde{g}=u^{4/(N-2m)}g$ with $u>0$ in $\mathcal{C}^\infty(M)$, equation \eqref{Eq:GJMS-conformal invariance} is written as
\begin{equation}\label{Eq:GJMS-Identity}
	P_{\widetilde{g}}\phi = u^{1-2_m^\ast}P_g(u\phi),
\end{equation}
where $2_m^\ast:=\frac{2N}{N-2m}$ is the critical Sobolev exponent of the embedding $H_g^m(M)\hookrightarrow L^p(M,g)$. Here $H_g^m(M)$ denotes the Sobolev space of order $m$, which is the closure of $\mathcal{C}^\infty(M)$ in $L^2_g(M)$ under the norm \begin{equation}\label{Eq:StandardSobolevNorm}
    \Vert u\Vert_{H^m}:=\left(\sum_{i=1}^m \int_M\vert \nabla_g^{i}u\vert^2 dV_g\right)^{1/2}.
\end{equation}
Taking $\phi\equiv 1$ in (\ref{Eq:GJMS-Identity}), one obtains the \emph{prescribed $Q$-curvature equation}
\begin{equation}\label{Eq:Q-curvature equation}
	P_g u = \frac{N-2m}{2}Q_{\tilde{g}}u^{2_m^\ast-1},\quad \text{ on } M.
\end{equation}
For $m=1$ and $Q_{\tilde{g}}$ constant, we recover the Yamabe Problem. Some results about the existence and multiplicity of solutions to the prescribed $Q$-curvature problem can be found in \cite{AzaizBoughazi2020,BaScWe,BenaliliBoughazi2016,DjadliHebeyLedoux2000,Ro,Tahri2020}. 
\newline
\smallskip 

Given $\ell\in\mathbb{N}$, we will deal with the \emph{$\ell$-partition problem} associated with \eqref{Eq:Q-curvature equation} in the presence of symmetries. In order to describe the problem, let $\Gamma$ be a closed subgroup of Isom$(M,g)$ under suitable conditions (see hypotheses (\hyperref[Gamma:Cohomogeneity]{$\Gamma1$}) and (\hyperref[Gamma:DimensionOrbits]{$\Gamma2$}) below), and let $\Omega\subset M$ be an open and $\Gamma$-invariant subset, namely, if $x\in\Omega$, then $\gamma x\in\Omega$ for every isometry $\gamma\in\Gamma$. In what follows, $H_{0,g}^m(\Omega)$ denotes the closure of $\mathcal{C}_c^\infty(\Omega)$ under the Sobolev norm given by (\ref{Eq:StandardSobolevNorm}).
\newline

We consider the symmetric Dirichlet boundary problem
\begin{equation} \label{eq:dirichlet}
	\begin{cases}
		P_g u = |u|^{2_m^*-2}u, & \text{ in } \Omega,\\
		\nabla^{i}u =0, i=0,1,\ldots,2m-1, & \text{ on } \partial\Omega, \\
		u \text{ is }\Gamma\text{-invariant},
	\end{cases}
\end{equation}
where $u:\Omega\rightarrow\mathbb{R}$ is said to be \emph{$\Gamma$-invariant} if $u(x)=u(\gamma x)$ for every $x\in M$ and any $\gamma\in\Gamma$. 

Denote by $c_\Omega^\Gamma$ the least energy among the nontrivial solutions, that is,
\[
c^\Gamma_\Omega:=\inf\left\{\frac{m}{N}\int_{\Omega}\vert u\vert^{2^\ast_m} dV_g:u\neq 0\text{ and } u\text{ solves }\eqref{eq:dirichlet}\right\}.
\]
In the absence of symmetries, the lack of compactness due to the critical Sobolev exponent nonlinearity in equation \eqref{eq:dirichlet}, implies that this number is not achieved in general  \cite[Chapter III]{StruweBook}.  However, when the domain is smooth, $\Gamma$-invariant and the $\Gamma$-orbits have positive dimension, this number is attained (see Proposition \ref{Proposition:ExistenceLeastEnergySolution} below). The $\ell$-partition problem consists in finding mutually disjoint and non empty $\Gamma$-invariant open subsets $\Omega_1,\ldots,\Omega_\ell$ such that
\begin{equation}\label{Problem:PartitionProblem}
\sum_{i=1}^\ell c_{\Omega_i}^\Gamma \leq \inf_{(\Phi_1,\ldots,\Phi_\ell)\in \mathcal{P}_\ell^\Gamma} \sum_{i=1}^\ell c_{\Phi_i}^\Gamma \end{equation}
where
\begin{align*}	\mathcal{P}_\ell^\Gamma:=\{\{\Omega_1,\ldots,\Omega_\ell\}:&\,\Omega_i\neq\emptyset \text{ is a }\Gamma\text{-invariant open subset of }M\text{ and }\Omega_i\cap \Omega_j=\emptyset\text{ if }i\neq j \}.
\end{align*}
\medskip

The aim of this paper is to show that this problem has a solution on Einstein manifolds  with positive scalar curvature, for every $m\geq 2$, and with less restrictive hypotheses on the metric, when $m=1$. 
\newline

In order to state our main result, we need to impose some conditions over $(M,g)$ and its isometry group (conditions \emph{\hyperref[Gamma:Cohomogeneity]{$(\Gamma1)$}} to \emph{\hyperref[Gamma:MetricDecomposition]{$(\Gamma3)$}} below). For the reader convenience, we recall some basic facts about isometric actions (see  Chapter 3 and Section 6.3 in \cite{AlexBettiol} for a detailed exposition). For any $x\in M$, the $\Gamma$ orbit of $x$ is the set $\Gamma x:=\{\gamma x\;:\;\gamma\in\Gamma\}$, and the isotropy subgroup of $x$ is defined as $\Gamma_x:=\{\gamma\in\Gamma \;:\; \gamma x = x\}$. When $\Gamma$ is a closed subgroup of $\text{Isom}(M,g)$, the $\Gamma$-orbits are closed submanifolds of $M$ which are \emph{$\Gamma$-diffeomorphic} to the homogeneous space $\Gamma/\Gamma_x$, meaning the existence of a $\Gamma$-invariant diffeomorphism between these two manifolds. An orbit $\Gamma x$ is called \emph{principal}, if there exists a neighborhood $V$ of $x$ in $M$ such that for each $y\in V$, $\Gamma_x\subset \Gamma_{\gamma y}$ for some $\gamma\in\Gamma$. All points lying in a principal orbit have, up to conjugacy, the same isotropy group. Denote this group by $K$, called the \emph{principal isotropy group.} We say that $\Gamma$ induces a \emph{cohomogeneity one action} on $M$ if the principal orbits have codimension one. In presence of a cohomogeneity one action, each principal $\Gamma$-orbit is a closed hypersurface in $M$ diffeomorphic to $\Gamma/K$, and there are exactly two orbits of bigger codimension, called \emph{singular orbits}. Denote them by $M_-$ and $M_+$. $M_\pm$ are closed submanifolds of codimension $\geq 2$, and all points lying in $M_\pm$ have, up to conjugacy, the same isotropy group. 
Denoting these groups by $K_\pm$, we have that $M_\pm$ is  $\Gamma$-diffeomorphic to $\Gamma/K_\pm$.
\newline

We can now state the conditions on the group $\Gamma$. In what follows, $d$ will denote the geodesic distance between $M_+$ and $M_-$, that is, $d:=\text{dist}_g(M_-,M_+)$.

 \begin{enumerate}
    \item[$(\Gamma 1)$] \label{Gamma:Cohomogeneity} $\Gamma$ is a closed subgroup of $\text{Isom}(M,g)$ inducing a cohomogeneity one action on $M$. 
    \item[$(\Gamma 2)$] \label{Gamma:DimensionOrbits} $1\leq \dim \Gamma x\leq N-1$ for every $x\in M$.
    
    \item[$(\Gamma3)$] \label{Gamma:MetricDecomposition} 
    The metric $g$ on $M$ is a $\Gamma$-invariant metric of the form:
    \begin{equation*}
    g=dt^2+\sum_{j=1}^k f_j^2(t)\ g_j,    \end{equation*}
    where $g_t:=\sum_{j=1}^k f_j^2(t)\ g_j$ is one parameter family of metrics on the principal orbit, for some positive smooth functions defined on the interval $I=[0, d]$, with suitable asymptotic conditions at $0$ and $d$. More precisely, the (smooth) metrics $g_j$ are defined on the principal orbit at $t$, for $t\in(0, d)$, and are defined around the singular orbits in such a way that $f_j$, $j=1, \dots, k$, satisfy appropriate smoothness conditions at the endpoints $t_-=0$ and $t_+=d$ of $I$, that ensure that $g$ can be extended to the singular orbits:    
    \begin{equation}
    \label{SmoothnessCond}
f_1(t_{\pm})=0, f_1'(t_{\pm})=1;\ f_j(t_{\pm})>0, f_j'(t_{\pm})=0\ \mbox{for $1<j\leq k$}.    
    \end{equation}    
     See \cite[Section 1]{EscWang00}.
\end{enumerate}
\medskip

In Section \ref{Section:Examples}, we give concrete examples where these hypotheses hold true.
\newline

We are now ready to state our main result, which describes the solution to the $\ell$-optimal partition problem in terms of the principal orbits of cohomogeneity one actions. The symbol ``$\approx$'' will stand for ``$\Gamma$-diffeomorphic to''. 

\begin{theorem}\label{Theorem:OptimalPartitionSymmetry}
Let $m\in\mathbb{N}$ and let $(M,g)$ be a closed Riemannian manifold of dimension $N>2m$ with scalar curvature $R_g$. If 
\begin{itemize}
    \item $R_g>0$ for $m=1$; or
    \item $(M,g)$ is Einstein with $R_g>0$,
\end{itemize}
then for any $\ell\in \mathbb{N}$ and any subgroup $\Gamma$ of Isom$(M,g)$ satisfying  \emph{(\hyperref[Gamma:Cohomogeneity]{$\Gamma1$})}, \emph{(\hyperref[Gamma:DimensionOrbits]{$\Gamma2$})} and \emph{(\hyperref[Gamma:MetricDecomposition]{$\Gamma3$})}, the $\Gamma$-invariant $\ell$-partition problem \eqref{Problem:PartitionProblem} has a solution $(\Omega_1,\ldots,\Omega_\ell)\in\mathcal{P}_\Omega^\Gamma$ such that:
\begin{enumerate}
    \item $\Omega_i$ is connected for every $i=1,\ldots, \ell$, $\overline{\Omega}_i\cap\overline{\Omega}_{i+1}\neq\emptyset$, $\Omega_i\cap\Omega_j=\emptyset$ if $\vert i-j \vert\geq 2$ and $\overline{\Omega_1\cup\ldots\cup\Omega_\ell }= M$.
    \item The sets $\Omega_1$ and $\Omega_\ell$ are $\Gamma$-diffeomorphic to disk bundles at $M_{-}$ and $M_{+}$, respectively. More precisely, 
    \[
    \Omega_1\approx \Gamma\times_{K-}D_{-},\quad \Omega_\ell\approx \Gamma\times_{K+}D_{+}, \quad \partial \Omega_1\approx\partial\Omega_\ell\approx \Gamma/K,
    \]
    where $\Gamma\times_{K_{\pm}}D_{\pm}$ are normal bundles over the singular orbits $M_{\pm}$. See Section \ref{Section:Segregation} for details.
    
    \item For each $i\neq 1,\ell$, $\Omega_i\approx \Gamma/K\times(0,1)$, $\overline{\Omega}_i\cap\overline{\Omega}_{i+1}\approx\Gamma/K$ and $\partial\Omega_i\approx \Gamma/K \sqcup \Gamma/K$, where $\sqcup$ denotes the disjoint union of sets.\end{enumerate}
\end{theorem}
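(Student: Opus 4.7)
The plan is to use the cohomogeneity one hypothesis to collapse the partition problem to a finite-dimensional minimization on a simplex of division points, and then read off the geometric structure from the slice theorem. Under $(\Gamma 1)$--$(\Gamma 3)$ the orbit space $M/\Gamma$ is homeomorphic to the closed interval $I=[0,d]$, with $0$ and $d$ corresponding to the singular orbits $M_\pm$ and the interior parametrizing principal orbits via the distance function $t:M\to I$. Every $\Gamma$-invariant open subset of $M$ has the form $t^{-1}(U)$ for some open $U\subset I$, every $\Gamma$-invariant solution of \eqref{eq:dirichlet} is a function of $t$ alone, and Proposition~\ref{Proposition:ExistenceLeastEnergySolution} guarantees that $c_\Omega^\Gamma$ is attained on any smooth $\Gamma$-invariant domain. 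I first restrict the infimum in \eqref{Problem:PartitionProblem} to partitions whose elements are connected intervals exhausting $M$: if some $\Omega_i$ has several $\Gamma$-invariant components, any ground state concentrates on a single one, so $c_{\Omega_i}^\Gamma=\min_j c_{A_i^j}^\Gamma$; the inactive components together with any unused gap $t^{-1}(U)\subset M\setminus\bigcup_k \overline{\Omega_k}$ can be absorbed into an adjacent $\Omega_j$, and by monotonicity of $c^\Gamma$ under inclusion the total energy only decreases. This reduces the problem to choosing $0\le t_1\le\cdots\le t_{\ell-1}\le d$ and setting $\Omega_1=t^{-1}[0,t_1)$, $\Omega_i=t^{-1}(t_{i-1},t_i)$ for $1<i<\ell$, and $\Omega_\ell=t^{-1}(t_{\ell-1},d]$.

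Define $F:\overline{\Sigma}\to[0,+\infty]$ on the simplex $\Sigma=\{0\le t_1\le\cdots\le t_{\ell-1}\le d\}$ by $F(t_1,\ldots,t_{\ell-1}):=\sum_{i=1}^\ell c^\Gamma_{\Omega_i(t)}$. Continuity of $F$ on the open simplex $\Sigma^\circ$ follows from a standard Hausdorff-perturbation argument for least energy levels of Nehari-type functionals. On $\partial\Sigma$ at least one $\Omega_i$ collapses to a codimension-one subset, and the aim is to show $c^\Gamma_{\Omega_i}\to+\infty$ in this limit: the curvature hypothesis enters here, furnishing coercivity of $P_g$ either directly from $R_g>0$ when $m=1$, or via the factorization of the GJMS operator into coercive second-order factors on Einstein manifolds with $R_g>0$ when $m\ge 2$, which combined with a concentration-compactness argument forces the ground energy to blow up. Hence $F$ attains its infimum at some interior point $t^*\in\Sigma^\circ$, yielding the optimal partition with all $\Omega_i$ nonempty. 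The geometric description in (2) and (3) is then a direct consequence of the slice theorem for cohomogeneity one actions: $\Omega_1=t^{-1}[0,t_1^*)$ is a $\Gamma$-invariant tubular neighborhood of $M_-$, hence $\Gamma$-equivariantly diffeomorphic to $\Gamma\times_{K_-}D_-$, and analogously for $\Omega_\ell$; each intermediate slab $t^{-1}(t_{i-1}^*,t_i^*)$ is trivialized as $\Gamma/K\times(0,1)$ by the flow of $\partial_t$, with boundary two principal orbits $\Gamma/K\sqcup\Gamma/K$.

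The hardest step is the energy blow-up on $\partial\Sigma$: in the critical regime the Sobolev embedding is scale invariant, so naive shrinking of a Euclidean ball does not inflate the ground energy. The required blow-up must instead be extracted from the positive dimension of the orbits---which, by $(\Gamma 2)$, improves the effective critical exponent for $\Gamma$-symmetric functions and renders the restricted embedding compact---combined with the coercivity of $P_g$ furnished by the curvature hypothesis. A secondary technical point is to justify the rearrangement rigorously when several components of an $\Omega_i$ achieve the common minimum $c^\Gamma$; this is handled by absorbing inactive components one at a time and appealing to continuity of $c^\Gamma$ under monotone inclusions.
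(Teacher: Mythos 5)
Your proposal takes a genuinely different route from the paper. The paper follows the Conti--Terracini--Verzini segregation strategy: one studies the weakly coupled competitive system \eqref{Eq:Q Systems}, takes least-energy fully nontrivial $\Gamma$-invariant solutions for a sequence $\eta_{ij,k}\to-\infty$ (Theorem \ref{Th:MainQSystems}), and shows that the components segregate in the limit, their supports giving the optimal partition; only afterwards, in Proposition~\ref{prop: solOPchone}, is the interval structure of the solution sets extracted from the one-dimensional reduction and a unique-continuation argument (Lemma~\ref{Lemma:UniqueContinuation}). You instead propose to \emph{start} from the reduction to interval partitions, collapse the infimum over $\mathcal{P}_\ell^\Gamma$ to a function $F$ on the $(\ell-1)$-simplex of division points in $[0,d]$, and look for an interior minimum by showing $F$ is continuous in $\Sigma^\circ$ and blows up on $\partial\Sigma$. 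This avoids the coupled-system machinery entirely, but shifts the work to two nontrivial analytic facts about the scalar function $F$, neither of which the paper needs in the form you use them.

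The preliminary reduction to interval partitions is essentially sound: on a domain with several $\Gamma$-invariant components a ground state, being a genuine critical point, is either zero or a ground state on each component, so $c_\Omega^\Gamma=\min_j c_{A^j}^\Gamma$; monotonicity of $c^\Gamma$ under inclusion then allows absorbing inactive components and gaps (this mirrors the content of Proposition~\ref{prop: solOPchone}, though the paper proves it a posteriori). The slice-theorem identification of the optimal domains with $\Gamma\times_{K_\pm}D_\pm$ and $\Gamma/K\times(0,1)$ is the same in both approaches.

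The real gap is in the two properties of $F$ that carry the whole weight of your argument, both of which you assert rather than prove. Continuity of $t\mapsto c^\Gamma_{\Omega(t)}$ under perturbation of interval endpoints is not immediate: upper semicontinuity is easy by cutoff/extension, but lower semicontinuity needs the compact embedding of Lemma~\ref{Lemma:Sobolev} plus a uniform lower bound on Nehari levels that is stable under domain perturbation, and near the singular orbits the weight $\beta(t)=\mathrm{Vol}(M_{d/2},g_t)$ degenerates, which has to be handled. The boundary blow-up is the more serious issue: your own remark that ``naive shrinking of a Euclidean ball does not inflate the ground energy'' identifies the danger, and the resolution you sketch --- that $(\Gamma 2)$ lowers the effective dimension and makes the restricted embedding compact --- is correct in spirit, but to make it quantitative you would have to prove a weighted one-dimensional Sobolev--Poincar\'e inequality for $\|\cdot\|_{\beta,\mathbf a}$ on shrinking subintervals of $[0,d]$ (including near the endpoints where $\beta$ vanishes) and show the best constant in $L^{2_m^*}$ degenerates, so that $c^\Gamma_{\Omega_i(t)}\to\infty$ uniformly as $t$ approaches any boundary face. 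None of this is carried out. Until these two estimates are proved, your argument identifies a plausible alternative strategy but does not constitute a proof; by contrast, the segregation route in the paper sidesteps both difficulties by producing the optimal partition together with its minimizers as a strong limit of system solutions.
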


In Section \ref{Section:Examples} we give several examples of Einstein manifolds with positive scalar curvature satisfying conditions (\hyperref[Gamma:Cohomogeneity]{$\Gamma1$}), (\hyperref[Gamma:DimensionOrbits]{$\Gamma2$}) and (\hyperref[Gamma:MetricDecomposition]{$\Gamma3$}). This result extends Theorem 1.1 in \cite{CSS21} and Theorem 1.2 in \cite{ClappFernandezSaldana2021} to more general manifolds and actions than the sphere with the $O(n)\times O(k)$-action, $n+k=N+1$.  In the case of $m=1$, M. Clapp and A. Pistoia \cite{ClappPistoia21} solved the $\Gamma$-invariant $\ell$-partition problem for actions with higher cohomogeneity and in \cite{ClappPistoiaTavares21}, the authors solved the $\ell$-partition problem without any symmetry assumption. However, neither the structure nor the domains solving the problem are explicit in these works. 

Notice that in case $m=1$, the scalar curvature is $\Gamma$-invariant, since we are regarding isometric actions. An interesting application of Theorem \ref{Theorem:OptimalPartitionSymmetry} in this case, is the following result for the Yamabe problem.

\begin{corollary}\label{Corollary:YamabeProblem}
Let $(M,g)$ be a closed Riemannian manifold of dimension $N\geq3$ and let $\Gamma$ be a closed subgroup of $\text{Isom}(M,g)$ satisfying \emph{\hyperref[Gamma:Cohomogeneity]{$(\Gamma1)$}} to \emph{\hyperref[Gamma:MetricDecomposition]{$(\Gamma3)$}}. If $R_g>0$, then for any $\ell\in\mathbb{N}$, the Yamabe  problem 
\[
-\Delta_g u + \frac{N-2}{4(N-1)}R_g u = \vert u\vert^{2_1^\ast-2}u,\quad \text{on } M,
\]
admits a $\Gamma$-invariant sign changing solution with exactly $\ell$-nodal domains. Moreover, it has least energy among all such solutions and the nodal set is a disjoint union of $\ell-1$ principal orbits.
\end{corollary}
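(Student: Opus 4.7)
\medskip

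\noindent\textbf{Proof proposal.} The plan is to apply Theorem \ref{Theorem:OptimalPartitionSymmetry} with $m=1$ (which is available because $R_g>0$) and assemble a sign-changing Yamabe solution by gluing the Dirichlet ground states on the pieces of the optimal partition, with alternating signs.

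First I would invoke Theorem \ref{Theorem:OptimalPartitionSymmetry} with $m=1$ to obtain an optimal $\Gamma$-invariant partition $(\Omega_1,\ldots,\Omega_\ell)$ of $M$, together with the structural information about the $\Omega_i$'s: they are connected, $\Gamma$-invariant, their closures cover $M$, consecutive closures meet in a single principal orbit $\Gamma/K$, and non-adjacent pieces are disjoint. On each $\Omega_i$, Proposition \ref{Proposition:ExistenceLeastEnergySolution} (mentioned in the introduction as guaranteeing attainment of $c^\Gamma_{\Omega_i}$ for smooth $\Gamma$-invariant domains with positive-dimensional orbits) yields a least energy $\Gamma$-invariant solution $u_i\in H^1_{0,g}(\Omega_i)$ of
\[
P_g u_i=|u_i|^{2^{\ast}_1-2}u_i\quad\text{in }\Omega_i,\qquad u_i=0\text{ on }\partial\Omega_i,
\]
where $P_g=-\Delta_g+\tfrac{N-2}{4(N-1)}R_g$. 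Since $|u_i|$ is also a minimizer (same energy, same $L^{2^\ast_1}$ norm, and $\int |\nabla |u_i||^2\leq \int|\nabla u_i|^2$), a standard strong maximum principle argument lets me choose $u_i>0$ in $\Omega_i$.

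Next I would define $u:=\sum_{i=1}^{\ell}(-1)^{i+1}\tilde u_i$, where $\tilde u_i$ denotes the extension of $u_i$ by zero to all of $M$. This function is $\Gamma$-invariant (sum of $\Gamma$-invariant functions), sign-changing, lies in $H^1_g(M)$ (each $\tilde u_i\in H^1_g(M)$ because $u_i\in H^1_{0,g}(\Omega_i)$), and has open nodal domains $\Omega_1,\ldots,\Omega_\ell$. To verify that $u$ solves the Yamabe equation on all of $M$, I test it against an arbitrary $\varphi\in\mathcal{C}^\infty(M)$: splitting $\int_M P_g u\cdot\varphi\,dV_g$ as a sum over the $\Omega_i$'s, each summand equals $\int_{\Omega_i}(-1)^{i+1}|u_i|^{2^\ast_1-2}u_i\varphi\,dV_g$ by the Dirichlet equation (the boundary terms vanish because $u_i\in H^1_{0,g}(\Omega_i)$ and the interfaces $\overline\Omega_i\cap\overline\Omega_{i+1}\approx \Gamma/K$ are smooth hypersurfaces of measure zero). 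Reassembling, this equals $\int_M|u|^{2^\ast_1-2}u\,\varphi\,dV_g$, so $u$ is a weak, hence classical by elliptic regularity, solution on $M$. Its nodal set is precisely $\bigcup_{i=1}^{\ell-1}(\overline\Omega_i\cap\overline\Omega_{i+1})$, a disjoint union of $\ell-1$ principal orbits by Theorem \ref{Theorem:OptimalPartitionSymmetry}(3).

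Finally, the least energy property among $\Gamma$-invariant sign-changing solutions with $\ell$ nodal domains follows from the optimality of the partition. Indeed, if $v$ is any such solution and $\Phi_1,\ldots,\Phi_\ell$ are its nodal domains, then each $\Phi_i$ is open, non-empty, $\Gamma$-invariant (since $v$ is), and $v|_{\Phi_i}$ (extended by zero) solves the Dirichlet problem on $\Phi_i$; therefore $\frac{1}{N}\int_{\Phi_i}|v|^{2^\ast_1}dV_g\geq c^\Gamma_{\Phi_i}$. Summing and using \eqref{Problem:PartitionProblem},
\[
\tfrac{1}{N}\int_M|v|^{2^\ast_1}dV_g=\sum_{i=1}^{\ell}\tfrac{1}{N}\int_{\Phi_i}|v|^{2^\ast_1}dV_g\geq\sum_{i=1}^{\ell}c^\Gamma_{\Phi_i}\geq\sum_{i=1}^{\ell}c^\Gamma_{\Omega_i}=\tfrac{1}{N}\int_M|u|^{2^\ast_1}dV_g,
\]
which is exactly the energy of $u$ (since for critical points of the Yamabe functional the energy reduces to $\frac{1}{N}\|u\|_{2^\ast_1}^{2^\ast_1}$). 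The only delicate point in the whole argument, which I would flag as the main thing to justify carefully, is the verification that the glued function $u$ is indeed a weak solution across the interfaces $\overline\Omega_i\cap\overline\Omega_{i+1}$; the appeal to $u_i\in H^1_{0,g}(\Omega_i)$ combined with the smoothness of these interfaces (principal orbits) handles it cleanly.
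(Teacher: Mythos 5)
Your overall strategy is sound and runs parallel to the paper's, but there is a genuine gap at the gluing step, and the justification you offer for it is not correct.

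To check that $u=\sum_i(-1)^{i+1}\tilde u_i$ is a weak solution on $M$, one must verify
$\int_M \nabla u\cdot\nabla\varphi + \tfrac{N-2}{4(N-1)}R_g u\varphi\,dV_g = \int_M |u|^{2^\ast_1-2}u\varphi\,dV_g$
for every $\varphi\in\mathcal{C}^\infty(M)$. Splitting over the $\Omega_i$ and integrating by parts on each piece produces boundary integrals $\int_{\partial\Omega_i}(\partial_\nu u_i)\,\varphi\,dS$, and these do \emph{not} vanish: $u_i\in H^1_{0,g}(\Omega_i)$ forces the trace of $u_i$ to vanish on $\partial\Omega_i$, but says nothing about the trace of $\partial_\nu u_i$, which by Hopf's lemma is in fact strictly nonzero. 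The interfaces having measure zero in $M$ is likewise irrelevant: the obstruction is a codimension-one boundary term, not a bulk integral. What one actually needs is that the outward normal derivatives of $u_i$ and $u_{i+1}$ cancel (with the alternating signs) across each interface $\overline{\Omega}_i\cap\overline{\Omega}_{i+1}$. This free-boundary matching condition is a consequence of the optimality of the partition, not of the individual Dirichlet problems, and it requires a separate argument — either a first-variation/domain-derivative computation, or, as in the paper, the segregation limit.

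The paper's route sidesteps this issue precisely by taking $u_{\infty,i}$ to be the segregation limits of components of solutions to the weakly coupled system \eqref{Eq:Q Systems}, obtained in Theorem \ref{Theorem:OptimalPartition}: because $m=1$ and the maximum principle holds, those components may be taken nonnegative (so $u_{\infty,i}\geq 0$), and the fact that $u_\ell:=\sum_i(-1)^i u_{\infty,i}$ solves the Yamabe equation across the interfaces (with the right $C^1$ matching) is inherited from the limiting procedure, cf.\ the one-dimensional reduction of Section \ref{Section:OneDimensionalReduction} and \cite[Theorem 4.1(iii)]{CSS21}. So the substance is the same — ground states on the optimal pieces, alternating signs, and the energy comparison you correctly carry out at the end — but the key analytic input (derivative matching at interfaces) is not established in your proposal, and the sentence you flag as ``the only delicate point'' is in fact the part that is missing.
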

This was proven in the case $\ell=2$ in \cite{ClappPistoia21} for general manifolds in the presence of symmetries, and in \cite{ClappPistoiaTavares21} in the non-symmetric case; for any $\ell\in\mathbb{N}$, the only manifold where this was known is  the round sphere in the presence of symmetries given by isoparametric functions \cite{FdzPetean20,CSS21}.

This corollary clearly holds on Einstein manifolds with positive scalar curvature, for which the scalar curvature is constant. Then, a natural setting for extending its applications is Ricci soliton metrics. Recall that a {\em Ricci soliton} on a closed smooth manifold $M$ is a Riemannian metric $g$ satisfying the equation
\begin{equation}
\label{eqn: RicciSoliton}
Ric(g)+{\rm Hess}_g(f)=\mu g.
\end{equation}
for some constant $\mu=-1, 0,$ or $1$, and some smooth function $f$, called the \emph{Ricci potential}. Here, as usual, $Ric(g)$ denotes the Ricci curvature of $g$, and ${\rm Hess}_g(f)$ denotes the Hessian of $f$ with respect to $g$. A Ricci soliton is called \emph{steady, shrinking} or \emph{expanding} according to $\mu = 0$, $\mu>0$ or $\mu<0$, respectively. 

In these terms, a nontrivial explicit example that fulfills the hypotheses of Corollary \ref{Corollary:YamabeProblem} and that does not reduce to an Einstein metric, is the {\em Koiso-Cao Ricci soliton} \cite{Koiso, Cao1}. It is known that this metric is a cohomogeneity one Kähler metric on  $\mathbb{CP}^2\#\overline{\mathbb{CP}^2}$, with respect to the action of U$(2)$, the unitary group of dimension $2$. Here, as usual, $\# $ denotes the connected sum of smooth manifolds  and $\overline{M}$ denotes a smooth manifold $M$ taken with reverse orientation.  It is also known that it is  non-Einstein with positive Ricci curvature. The singular orbits of the U$(2)$-action are both diffeomorphic to $\mathbb{S}^2$, and the principal orbits are diffeomorphic to $\mathbb{S}^3$.  The associated fibrations, as a cohomogeneity one manifold, are both given by the Hopf fibration:
\[
{\rm U}(1)\to {\rm SU}(2)\to {\rm SU}(2)/{\rm U}(1).
 \]
Here, for an orientable manifold $M$, we will denote by $\overline M$ to refer to the opposite orientation. 

See \cite[Section 2]{TOR17} for details on the construction of the Koiso-Cao soliton from the Hopf fibration. 
\newline

In order to establish the existence of a solution to the optimal partition problem \eqref{Problem:PartitionProblem}, we follow the approach introduced in \cite{ContiTerraciniVerzini2002}, consisting in the study of a weakly coupled competitive system together with a segregation phenomenon. To this end, we will study the existence of $\Gamma$-invariant solutions to the following weakly coupled competitive $Q$-curvature system

\begin{equation}\label{Eq:Q Systems}
	P_g u_i = \nu_i\vert u_i\vert^{2_m^\ast-2}u_i + \sum_{i\neq j}\eta_{ij}\beta_{ij}\vert u_j\vert^{\alpha_{ij}} \vert u_i\vert^{\beta_{ij}-2}u_i,\quad\text{ on }M,\ \   i,j=1,\ldots,\ell,
\end{equation}
where $\nu_i>0$, $\eta_{ij}=\eta_{ji}<0$ and $\alpha_{ij},\beta_{ij}>1$ are such that $\alpha_{ij}=\beta_{ji}$ and $\alpha_{ij}+\beta_{ij}=2_m^\ast$. We will say that a solution $\overline{u}=(u_1,\ldots,u_\ell)$ to the system \eqref{Eq:Q Systems} is \emph{fully nontrivial}, if, for each $i=1,\ldots,\ell$, $u_i$ is non trivial.

To assure the existence of a fully nontrivial least energy $\Gamma$-invariant solution to the system \eqref{Eq:Q Systems} we will only assume that $\Gamma$ satisfies (\hyperref[Gamma:DimensionOrbits]{$\Gamma2$}), allowing actions with bigger codimension of its principal orbits, and also we will assume that the operator $P_g$ is \emph{coercive}, meaning the existence of a constant $C>0$ such that
\begin{equation*}
	\int_M uP_gu\; dV_g \geq C\Vert u\Vert_{H^m}^2,\qquad\text{ for every }u\in\mathcal{C}^\infty(M).
\end{equation*}
We will say that a sequence of fully nontrivial elements $\overline{u}_k$ in the Sobolev space
\[
H_g^m(M)^\ell:=\underbrace{H_g^m(M)\times\cdots\times H_g^m(M)}_{\ell\text{ times}}
\]
is \emph{unbounded} if $\Vert u_{k,i}\Vert_{H^m}\rightarrow\infty,$ as $k\rightarrow\infty$,  for every $i=1,\ldots,\ell.$ In this direction, we state our next multiplicity result.

\begin{theorem}\label{Th:MainQSystems}
	If the operator $P_g$ is coercive and \emph{(\hyperref[Gamma:DimensionOrbits]{$\Gamma2$})} holds true, then the system \eqref{Eq:Q Systems} admits an unbounded sequence of $\Gamma$-invariant fully nontrivial solutions. One of them has least energy among all fully nontrivial $\Gamma$-invariant solutions. 
\end{theorem}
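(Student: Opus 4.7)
The proof is variational. Define on $(H^m_g(M))^\ell$ the functional
\[
\mathcal{J}(\overline{u}) = \frac{1}{2}\sum_{i=1}^\ell \int_M u_i\, P_g u_i \, dV_g - \sum_{i=1}^\ell \frac{\nu_i}{2_m^*}\int_M |u_i|^{2_m^*} dV_g - \frac{1}{2}\sum_{i\neq j}\eta_{ij}\int_M |u_j|^{\alpha_{ij}}|u_i|^{\beta_{ij}} dV_g,
\]
whose critical points are precisely the solutions of \eqref{Eq:Q Systems}. Coercivity of $P_g$ ensures that $u\mapsto \int_M u P_g u\, dV_g$ is equivalent to $\|u\|_{H^m}^2$, so $\mathcal{J}$ is well-defined and of class $C^1$. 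Denoting by $\mathcal{H}^\Gamma\subset (H^m_g(M))^\ell$ the closed subspace of tuples of $\Gamma$-invariant functions, Palais' principle of symmetric criticality reduces the task to finding critical points of $\mathcal{J}|_{\mathcal{H}^\Gamma}$.

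The decisive ingredient is compactness. Under \emph{(\hyperref[Gamma:DimensionOrbits]{$\Gamma 2$})} every $\Gamma$-orbit has dimension at least one, so a Hebey--Vaugon type improvement of the Sobolev embedding yields the compact inclusion $H^m_g(M)^\Gamma \hookrightarrow L^{2_m^*}(M,g)$ (since $2_m^*$ is strictly below the effective critical exponent $2(N-T)/(N-T-2m)$ whenever the minimal orbit dimension $T$ is at least one). I would then introduce the Nehari-type set
\[
\mathcal{N}^\Gamma = \left\{\overline{u}\in \mathcal{H}^\Gamma : u_i\not\equiv 0 \text{ and } \partial_i \mathcal{J}(\overline{u})[u_i]=0 \text{ for each } i=1,\ldots,\ell\right\}.
\]
The competitive sign $\eta_{ij}<0$ combined with $\alpha_{ij}+\beta_{ij}=2_m^*$ implies, by a standard fibering argument on the positive orthant, that every tuple with all $u_i\not\equiv 0$ admits a unique positive rescaling $(t_1 u_1,\ldots, t_\ell u_\ell)\in\mathcal{N}^\Gamma$ which realizes the maximum of $\mathcal{J}$ along the corresponding cone. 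This makes $\mathcal{N}^\Gamma$ a $C^1$-manifold, homeomorphic to a product of unit spheres in $H^m_g(M)^\Gamma$, and a natural constraint for $\mathcal{J}$ bounded away from zero. Minimizing $\mathcal{J}$ on $\mathcal{N}^\Gamma$, the compact symmetric embedding allows one to extract a strongly convergent subsequence from any minimizing sequence, yielding the desired least energy fully nontrivial $\Gamma$-invariant solution.

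For the unbounded sequence I would exploit the $(\mathbb{Z}/2)^\ell$-symmetry of $\mathcal{J}$ under the sign changes $u_i\mapsto -u_i$, which acts freely on $\mathcal{N}^\Gamma$. Applying an equivariant Ljusternik--Schnirelmann / pseudo-index scheme adapted to competitive systems (in the spirit of \cite{ClappFernandezSaldana2021}), together with the symmetric compactness from \emph{(\hyperref[Gamma:DimensionOrbits]{$\Gamma 2$})}, produces a sequence of critical levels $c_k\to\infty$ of $\mathcal{J}|_{\mathcal{N}^\Gamma}$ and corresponding $\Gamma$-invariant fully nontrivial solutions whose norms diverge. The main obstacle throughout is verifying the Palais--Smale condition at every level despite the critical exponent: concentration along a PS sequence would have to be $\Gamma$-invariant, forcing any limiting bubble to be constant along the positive-dimensional orbit of its blow-up point, which is incompatible with the pointwise profile of an entire bubble on $\mathbb{R}^N$. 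Ruling out this defect of compactness, which is exactly the geometric content of \emph{(\hyperref[Gamma:DimensionOrbits]{$\Gamma 2$})}, is the technical heart of the proof.
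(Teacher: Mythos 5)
Your overall strategy matches the paper's: both proofs follow the Clapp--Szulkin variational scheme (restriction to $\Gamma$-invariant spaces via symmetric criticality, a Nehari-type constraint, compactness from the improved Sobolev embedding under positive-dimensional orbits, and a genus/index multiplicity argument). However, one central claim in your proposal is wrong, and it is precisely the point that forces the paper's more elaborate machinery.

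You assert that ``every tuple with all $u_i\not\equiv 0$ admits a unique positive rescaling $(t_1u_1,\ldots,t_\ell u_\ell)\in\mathcal{N}^\Gamma$,'' and hence that $\mathcal{N}^\Gamma$ is homeomorphic to a product of unit spheres. This fails for competitive systems. Writing $\overline{s}\,\overline{u}=(s_1u_1,\ldots,s_\ell u_\ell)$, the fibering map $\overline{s}\mapsto\mathcal{J}(\overline{s}\,\overline{u})$ has the coupling contribution $-\tfrac12\sum_{i\neq j}\eta_{ij}s_j^{\alpha_{ij}}s_i^{\beta_{ij}}\int|u_j|^{\alpha_{ij}}|u_i|^{\beta_{ij}}$, which is \emph{positive} (since $\eta_{ij}<0$) and scales with the same homogeneity $\alpha_{ij}+\beta_{ij}=2_m^\ast$ as the $\nu_i$ terms. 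When the supports of the components overlap and $|\eta_{ij}|$ is large, the positive coupling term can dominate, so the fibering map need not attain an interior maximum and no Nehari projection exists. This is exactly why the paper works with the set $\mathcal{U}$, which is only a \emph{nonempty open subset} of the product of spheres $\mathcal{T}$ (Lemma~\ref{lem:U}), and why it introduces the reduced functional $\Psi=\mathcal{J}\circ\rho$ together with the blow-up property $\Psi(\overline{u}_n)\to\infty$ as $\overline{u}_n\to\partial\mathcal{U}$ (Lemma~\ref{Lemma:Psi}(iv)). That last property is not cosmetic: it guarantees that the negative pseudogradient flow of $\Psi$ leaves $\mathcal{U}$ positively invariant, so the deformation lemma and the genus/Ljusternik--Schnirelmann scheme can be run on the non-closed manifold $\mathcal{U}$. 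Your proposal, taking $\mathcal{N}^\Gamma\cong\mathcal{T}$ as given, bypasses this issue and so would not close the argument without the additional boundary control. Your discussion of how $(\Gamma 2)$ restores compactness is conceptually correct but remains heuristic; the paper packages it cleanly as the compact embedding of $H^m_{0,g}(\Omega)^\Gamma$ into $L^r$ for $r<2^\ast_{m,\Gamma}$ (Lemma~\ref{Lemma:Sobolev}), after which the $(PS)_c$ verification for $\Psi$ is routine.
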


When $\ell=1$ and $m=1$, by a well-known argument given in \cite[Proof of Theorem A]{BenciCerami1991}, the least energy solutions are positive and, hence, they give rise to a $\Gamma$-invariant solution to the Yamabe problem. Seeking for this kind of metrics is a classical problem posed by E. Hebey and M. Vaugon in \cite{HebeyVaugon1993}. However, there is a gap in Hebey and Vaugon's proof, for they used Schoen's Weyl vanishing conjecture, which turns out to be false in higher dimensions by Brendle's counterexample in \cite{Brendle2008}. F. Madani solved the equivariant Yamabe problem in \cite{Madani2012} assuming the positive mass theorem to construct good test functions. Here, the positive dimension of the orbits given by hypothesis (\emph{\hyperref[Gamma:DimensionOrbits]{$\Gamma2$}}), avoids these problems in higher dimensions. For $m\geq 2$, the Maximum Principle for the operator $P_g$ is not true in general, and the least energy solutions may change sign. In fact, it is not clear whether the components of the solutions to the system \eqref{Eq:Q Systems} are sign-changing or not. In case $m=2$ and $\ell=1$, by a recent result by J. Vétois \cite[Theorem 2.2]{Vetois2022}, we can assure that an infinite number of the solutions to
\begin{equation}\label{Equation:Paneitz-Branson}
P_g u = \vert u\vert^{2_2^\ast - 4}u\qquad \text{in } M,
\end{equation}
must change sign when considering $(M,g)$ to be Einstein with positive scalar curvature, as we next state.

\begin{corollary}\label{Corollary:NodalSolutionsPaneitz}
Let $m=2$, $(M,g)$ be an Einstein manifold of dimension $N>4$, with positive scalar curvature  and not conformally diffeomorphic to the standard sphere. If $\Gamma$ is a closed subgroup of $\text{Isom}(M,g)$ satisfying \emph{(\hyperref[Gamma:DimensionOrbits]{$\Gamma2$})}, then the problem with Paneitz-Branson operator \eqref{Equation:Paneitz-Branson}
admits an unbounded sequence of $\Gamma$-invariant sign-changing solutions.
\end{corollary}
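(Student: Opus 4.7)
The plan is to derive Corollary \ref{Corollary:NodalSolutionsPaneitz} as a direct consequence of Theorem \ref{Th:MainQSystems} in the scalar case $\ell=1$, combined with the compactness result of Vétois quoted immediately above. First, I would verify that the Paneitz--Branson operator $P_g$ is coercive on $(M,g)$: because $(M,g)$ is Einstein with positive scalar curvature, $P_g$ reduces to a polynomial in $\Delta_g$ with constant positive coefficients determined by $N$ and $R_g$, and after integration by parts the resulting quadratic form $\int_M u P_g u\,dV_g$ can be bounded below by a multiple of $\Vert u\Vert_{H^2}^2$ using $R_g>0$ together with the Lichnerowicz--Obata spectral gap for $\Delta_g$.

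Once coercivity is in place, I would apply Theorem \ref{Th:MainQSystems} with $\ell=1$, $m=2$ and $\nu_1=1$. In this degenerate case the coupling sum in \eqref{Eq:Q Systems} is empty and the system collapses to the single equation \eqref{Equation:Paneitz-Branson}, so the theorem yields an unbounded sequence $\{u_k\}\subset H_g^2(M)$ of $\Gamma$-invariant solutions with $\Vert u_k\Vert_{H^2}\to\infty$.

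The remaining step is to rule out that all but finitely many of the $u_k$ have constant sign. By \cite[Theorem 2.2]{Vetois2022}, on an Einstein manifold of dimension $N>4$ with positive scalar curvature that is not conformally diffeomorphic to the round sphere, the set of positive smooth solutions of \eqref{Equation:Paneitz-Branson} is compact in $C^4(M)$, hence bounded in $H^2_g(M)$; since \eqref{Equation:Paneitz-Branson} is odd in $u$, the same bound applies to negative solutions. Therefore only finitely many $u_k$ can have constant sign, and discarding them produces the desired unbounded sequence of $\Gamma$-invariant sign-changing solutions. The main obstacle I anticipate is not in the logical architecture, which is essentially mechanical, but in confirming that Vétois's compactness statement transfers to an $H^2$-bound of the required form and that the coercivity inequality really holds in the norm \eqref{Eq:StandardSobolevNorm} used in Theorem \ref{Th:MainQSystems}.
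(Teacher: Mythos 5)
Your proposal is structurally correct and is essentially the same route as the paper: apply Theorem \ref{Th:MainQSystems} with $\ell=1$, observe that the resulting sequence is unbounded in $H^2_g(M)$, and invoke Vétois to conclude that only finitely many members can have constant sign. Two remarks on the details. First, for coercivity you invoke a Lichnerowicz--Obata spectral gap, but this is unnecessary: on an Einstein manifold with $R_g>0$ the Paneitz--Branson operator factorizes as $P_g=(-\Delta_g+c_1)(-\Delta_g+c_2)$ with $c_1,c_2>0$ (see \cite{Gover06,Juhl13}), so $\int_M uP_gu = \int_M (\Delta_g u)^2 + (c_1+c_2)\int_M|\nabla u|^2 + c_1c_2\int_M u^2$ is manifestly bounded below by a multiple of $\Vert u\Vert_{H^2}^2$; the paper simply cites \cite[Proposition 4]{Ro}. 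Second, you describe Vétois's Theorem 2.2 as a compactness statement (the set of positive solutions is compact in $C^4$), but the actual result is a rigidity statement: on an Einstein manifold not conformal to the round sphere, every positive solution of \eqref{Equation:Paneitz-Branson} is constant, and that constant is determined by $N$ and $R_g$, so there is exactly one positive and one negative non-sign-changing solution. The conclusion you draw (positive solutions are bounded in $H^2$, hence only finitely many $u_k$ can be of constant sign) still follows from the correct statement, so the argument goes through; you just worked from weaker information than is actually available.
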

\medskip

There are many examples of manifolds admitting an Einstein metric $g$ with positive scalar curvature, in order to ensure the coercivity of $P_g$. In Section \ref{Section:Examples}, we describe explicit examples and some construction that provide large classes of examples. However, it is difficult to find nontrivial examples of non-Einstein manifolds for which the operator $P_g$ is coercive. Towards this direction, the next result for the Paneitz-Branson operator gives a sufficient condition for this to happen.

\begin{prop}\label{Proposition:Coercivity}
For $m=2$ and $N\geq 6$, if $Q_g>0$ and $R_g>0$, then $P_g$ is coercive.
\end{prop}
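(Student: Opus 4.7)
The plan is to expand the Dirichlet form $u\mapsto \int_M u\,P_g u\, dV_g$ explicitly, use the integrated Bochner identity to trade the negative Ricci term for a combination of $\int(\Delta_g u)^2\,dV_g$ and $\int|\nabla^2 u|^2\,dV_g$, and then check that the hypothesis $N\ge 6$ forces every surviving coefficient to be non-negative. Concretely, for $N\ge 5$ the Paneitz--Branson operator takes the form
\[
P_g u = \Delta_g^2 u - \mathrm{div}_g\!\bigl((a_N R_g\, g - b_N\,\mathrm{Ric}_g)(\nabla u)\bigr) + \tfrac{N-4}{2}\,Q_g\,u,
\]
with $a_N=\tfrac{(N-2)^2+4}{2(N-1)(N-2)}>0$ and $b_N=\tfrac{4}{N-2}>0$, so integration by parts on the closed manifold $M$ yields
\[
\int_M u\,P_g u\, dV_g = \int_M\!\bigl[(\Delta_g u)^2 + a_N R_g\,|\nabla u|^2 - b_N\,\mathrm{Ric}_g(\nabla u,\nabla u) + \tfrac{N-4}{2}Q_g\,u^2\bigr] dV_g.
\]

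Second, I would apply the integrated Weitzenb\"ock/Bochner identity
\[
\int_M \mathrm{Ric}_g(\nabla u,\nabla u)\,dV_g = \int_M (\Delta_g u)^2\,dV_g - \int_M |\nabla^2 u|^2\,dV_g,
\]
which follows by integrating the pointwise formula $\tfrac12\Delta_g|\nabla u|^2 = |\nabla^2 u|^2 + \langle\nabla u,\nabla\Delta_g u\rangle + \mathrm{Ric}_g(\nabla u,\nabla u)$ and using the divergence theorem. Substituting and collecting the $(\Delta_g u)^2$ terms produces
\[
\int_M u\,P_g u\, dV_g = \tfrac{N-6}{N-2}\!\int_M\!(\Delta_g u)^2\,dV_g + \tfrac{4}{N-2}\!\int_M\!|\nabla^2 u|^2\,dV_g + a_N\!\!\int_M\! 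R_g|\nabla u|^2\,dV_g + \tfrac{N-4}{2}\!\!\int_M\! Q_g\,u^2\,dV_g.
\]

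Third, for $N\ge 6$ every prefactor in this identity is non-negative, while compactness of $M$ together with $R_g>0$ and $Q_g>0$ forces uniform positive lower bounds $R_g\ge r_0$ and $Q_g\ge q_0$. The last three terms therefore control positive multiples of $\|\nabla^2 u\|_{L^2}^2$, $\|\nabla u\|_{L^2}^2$ and $\|u\|_{L^2}^2$, which is enough to conclude $\int_M u\,P_g u\,dV_g \ge C\|u\|_{H^2}^2$. The delicate point of the argument is the algebra of the second step: Bochner converts $-b_N\int\mathrm{Ric}_g(\nabla u,\nabla u)$ into $-b_N\int(\Delta_g u)^2 + b_N\int|\nabla^2 u|^2$, so that the net coefficient of $\int(\Delta_g u)^2$ becomes $1-b_N=\tfrac{N-6}{N-2}$, which flips from negative to non-negative exactly at $N=6$. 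For $N\le 5$ a negative multiple of $\int(\Delta_g u)^2$ is left over and it cannot be absorbed by $\int|\nabla^2 u|^2$, since the pointwise inequality $(\Delta_g u)^2\le N|\nabla^2 u|^2$ runs the wrong way; thus the dimensional restriction is intrinsic to this approach rather than an artefact of the proof.
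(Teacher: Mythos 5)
Your proof is correct and rests on exactly the same key identity as the paper's: the Gursky--Malchiodi expansion of the Paneitz quadratic form into $\tfrac{N-6}{N-2}\int(\Delta_gu)^2+\tfrac{4}{N-2}\int|\nabla^2u|^2+a_N\int R_g|\nabla u|^2+\tfrac{N-4}{2}\int Q_gu^2$, all of whose coefficients are non-negative precisely when $N\geq 6$. The only difference is that you derive this identity from the explicit divergence form of $P_g$ together with the integrated Bochner formula, whereas the paper simply cites it as equation (2.18) of \cite{GurskyMal03}; your derivation is a sound and self-contained reconstruction of that reference's computation.
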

\begin{proof}
It is a direct consequence of a calculation obtained in \cite{GurskyMal03}. It appears as equation (2.18): 
\[
\begin{split}
&\int_M \phi P_g \phi\\
&= \int_M\left[\frac{N-6}{N-2}(\Delta \phi)^2+\frac{4|{\rm Hess}(\phi)|^2 }{N-2}+\frac{(N-2)^2+4}{2(N-1)(N-2)}R_g|\nabla \phi|^2+\frac{N-4}{2} Q_g\phi^2 \right] d{\rm vol}(g).
\end{split}
\]
\end{proof}

Again, it is complicated to compute the $Q$-curvature of an arbitrary manifold and the literature lacks examples, different from Einstein manifolds.  We explore the possibility of obtaining positive $Q$-curvature and coercivity for more general manifolds, such as Ricci solitons.  This is difficult to check because almost nothing is known about the Ricci curvature tensor in Ricci solitons. Our next result gives an explicit formula of the $Q$-curvature of these metrics.

\begin{theorem}\label{Theorem:Q-curvatureRicciSolitons}
The $Q$-curvature of a shrinking Ricci soliton $(M, g)$, with Ricci potential $f$, is given by:
\[ 
Q_g=(2{\bf a}-{\bf c})|Ric_g|_g^2+{\bf b}R_g^2-2{\bf a}R-2{\bf a}\ Ric_g(\nabla f, \nabla f).
\]

where
\[
{\bf a}=\frac{1}{2(N-1)}, \quad {\bf b}=\frac{N^3-4N^2+16N-16}{8(N-1)^2(N-2)^2},\quad  {\bf c}=\frac{2}{(N-2)^2}.
\]
If $(M, g)$ has radially positive Ricci curvature, i.e., $Ric_g(\nabla f, \nabla f)>0$, and
\begin{itemize}
\item for $N=4$, $R^2>|Ric_g|_g^2+2R_g+2 Ric_g(\nabla f, \nabla f)$,

\item or for $N>4$, $(2{\bf a}-{\bf c})|Ric_g|_g^2>2{\bf a}\ Ric_g(\nabla f, \nabla f)>0$ and $R_g> 2{\bf a}/{\bf b}$,

\end{itemize}
then $Q_g>0$. 
\end{theorem}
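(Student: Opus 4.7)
My plan is to start from the standard Paneitz--Branson expression for the $Q$-curvature in dimension $N$ (for $m=2$),
\[
Q_g = -{\bf a}\,\Delta_g R_g - {\bf c}\,|Ric_g|_g^2 + {\bf b}\,R_g^2,
\]
whose coefficients are precisely those in the statement, and then reduce $\Delta_g R_g$ to the three quantities $R_g$, $|Ric_g|_g^2$, and $Ric_g(\nabla f,\nabla f)$ by exploiting the shrinking-soliton equation $Ric_g+\hess_g(f)=g$. Three classical identities will do the job: taking the trace yields $\Delta_g f=N-R_g$; taking one divergence of the soliton equation and applying the contracted second Bianchi identity produces $Ric_g(\nabla f,\cdot)=\tfrac12\nabla R_g$, so that $\langle\nabla f,\nabla R_g\rangle = 2\,Ric_g(\nabla f,\nabla f)$; and Hamilton's identity, obtained by a further divergence and retracing, reads $\Delta_g R_g - \langle\nabla f,\nabla R_g\rangle = 2R_g - 2|Ric_g|_g^2$. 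Combining the last two gives $\Delta_g R_g = 2R_g - 2|Ric_g|_g^2 + 2\,Ric_g(\nabla f,\nabla f)$, and substituting this into the Paneitz--Branson formula and collecting terms yields exactly the claimed expression.

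For the positivity half I would split on dimension. In dimension four the constants collapse to ${\bf a}={\bf b}=1/6$ and ${\bf c}=1/2$, so $2{\bf a}-{\bf c}=-1/6$ and a direct substitution gives
\[
6\,Q_g = R_g^2 - |Ric_g|_g^2 - 2R_g - 2\,Ric_g(\nabla f,\nabla f),
\]
which is positive exactly under the inequality stated for $N=4$. For $N>4$ an elementary computation shows
\[
2{\bf a}-{\bf c}=\frac{N^2-6N+6}{(N-1)(N-2)^2}>0,
\]
so the curvature-tensor part $(2{\bf a}-{\bf c})|Ric_g|_g^2 - 2{\bf a}\,Ric_g(\nabla f,\nabla f)$ is positive by the first hypothesis, while the scalar-curvature part $R_g\bigl({\bf b}R_g-2{\bf a}\bigr)$ is positive because $R_g>2{\bf a}/{\bf b}>0$. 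Adding the two positive pieces gives $Q_g>0$.

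The only real delicacy lies in correctly deriving Hamilton's identity (most transparently phrased with the drift Laplacian as $\Delta_f R_g=2\mu R_g-2|Ric_g|_g^2$, here with $\mu=1$) and in matching the sign and normalization of the coefficient ${\bf b}$ of $R_g^2$ against the standard Paneitz--Branson formula; once those are in place the first half is just substitution and sign tracking, and the positivity half is entirely algebraic manipulation of a quadratic in the curvature invariants.
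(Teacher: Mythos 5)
Your proposal is correct and follows essentially the same route as the paper: start from the Paneitz–Branson expression $Q_g=-{\bf a}\,\Delta_g R_g+{\bf b}\,R_g^2-{\bf c}\,|Ric_g|_g^2$, replace $\Delta_g R_g$ via Hamilton's identity $\Delta_g R_g = 2R_g - 2|Ric_g|_g^2 + 2\,Ric_g(\nabla f,\nabla f)$ (the paper quotes this directly, citing Petersen–Wylie, rather than re-deriving it from Bianchi as you do), and then for positivity split on the sign of $2{\bf a}-{\bf c}$, treating $N=4$ by an explicit rescaling and $N>4$ by the two stated inequalities. The only difference is cosmetic: the paper records the positivity step as a separate proposition, while you fold it into one argument.
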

\medskip

\begin{remark}
The condition of having radially positive Ricci curvature is plausible, due to the fact that if $Ric_g(\nabla f, \nabla f)\leq 0$ everywhere, then the Ricci soliton is trivial \cite[Theorem 1.1]{PetWyl09a}. Moreover, if it is radially flat, i.e., $Ric_g(\nabla f, \nabla f)\equiv 0$, then it is also trivial \cite[Proposition 2]{PetWyl09b}). If $(M, g)$ is a cohomogeneity one Kähler-Ricci soliton under the action of $\Gamma$, then one may average over $\Gamma$ to obtain a Ricci potential that is a $\Gamma$-invariant function on $M$. 
Then the following identity holds true:
\[
Ric_g(\nabla f, \nabla f)=Ric_g\left(f'\frac{\partial }{\partial t}, f'\frac{\partial }{\partial t} \right)=(f')^2 Ric_g\left(\frac{\partial }{\partial t}, \frac{\partial }{\partial t}\right).
\]
Since a non-trivial shrinking Kähler-Ricci soliton has positive Ricci curvature, then under a cohomogeneity one action, it also has radially positive scalar curvature. Even more, all the known compact non-Einstein Ricci solitons are Kähler\cite{Cao2010}. \qed
\end{remark}
\bigskip

We have the following immediate consequence of Theorems \ref{Th:MainQSystems} and \ref{Theorem:Q-curvatureRicciSolitons}, together with Proposition \ref{Proposition:Coercivity}.

\begin{corollary}
	Let $(M,g)$ be Einstein with $R_g>0$  or a shrinking Ricci soliton with radially positive Ricci curvature of dimension $N>2m$. If \emph{(\hyperref[Gamma:DimensionOrbits]{$\Gamma2$})} holds true, then the system \eqref{Eq:Q Systems} admits an unbounded sequence of fully nontrivial $\Gamma$-invariant solutions, one of them with least energy among all fully nontrivial $\Gamma$-invariant solutions.
\end{corollary}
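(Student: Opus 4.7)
The strategy is to reduce the corollary to Theorem \ref{Th:MainQSystems} by verifying its hypotheses in each geometric setting. The symmetry hypothesis $(\Gamma 2)$ is directly assumed in the corollary, so the only real task is to establish coercivity of the GJMS operator $P_g$ in both cases. Once coercivity is in hand, Theorem \ref{Th:MainQSystems} produces the unbounded sequence of $\Gamma$-invariant fully nontrivial solutions (with one of least energy) as a black box.

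\textbf{Einstein case.} For an Einstein metric $(M,g)$ with $R_g>0$, I would invoke the well-known factorization of the GJMS operators on Einstein backgrounds (going back to Branson \cite{Branson1995} and extended by Fefferman--Graham and Gover): on such a manifold one has
\[
P_g \;=\; \prod_{k=1}^{m}\left(-\Delta_g + \alpha_k(N,m)\, R_g\right),
\]
where the constants $\alpha_k(N,m)$ are strictly positive whenever $2m<N$. Because each second-order factor is a Schrödinger-type operator with a strictly positive constant potential, each is coercive on $H^1_g(M)$. Since the factors all commute (they are polynomial expressions in $-\Delta_g$), one can diagonalize $P_g$ in an eigenbasis of $\Delta_g$ and read off the lower bound $\int_M u\, P_g u\, dV_g \geq C\|u\|_{H^m}^2$ directly; alternatively, apply the coercivity of each factor iteratively together with elliptic regularity. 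Either route yields coercivity of $P_g$ on $H^m_g(M)$.

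\textbf{Shrinking Ricci soliton case.} Here the relevant order is $m=2$, so that Proposition \ref{Proposition:Coercivity} applies; the dimension hypothesis $N>2m$ combined with the $N\geq 6$ requirement of that proposition fixes the range. I would check the two inputs of Proposition \ref{Proposition:Coercivity} in turn. The scalar curvature $R_g$ of a compact nontrivial shrinking Ricci soliton is strictly positive: tracing the soliton equation \eqref{eqn: RicciSoliton} and applying the strong maximum principle to the resulting equation for $R_g$ is a standard argument. For the sign of $Q_g$, I would appeal directly to Theorem \ref{Theorem:Q-curvatureRicciSolitons}: under the radially positive Ricci curvature hypothesis, combined with the curvature inequalities built into its statement (the case $N=4$ is excluded here since we need $N\geq 6$, so only the $N>4$ clause is invoked), Theorem \ref{Theorem:Q-curvatureRicciSolitons} gives $Q_g>0$. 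Feeding $R_g>0$ and $Q_g>0$ into Proposition \ref{Proposition:Coercivity} yields coercivity of the Paneitz--Branson operator.

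With coercivity established in both geometric settings and with $(\Gamma 2)$ assumed, Theorem \ref{Th:MainQSystems} applies directly to produce the claimed unbounded sequence of $\Gamma$-invariant fully nontrivial solutions to \eqref{Eq:Q Systems}, including one of least $\Gamma$-invariant energy. The main potential obstacle is the verification of positivity of the constants $\alpha_k(N,m)$ in the Einstein factorization for all $k=1,\dots,m$ and all relevant $(N,m)$ with $2m<N$; this is standard in conformal geometry but is the one nontrivial computation in the argument. The Ricci soliton case, by contrast, is just a matter of chaining previously stated results and the maximum principle for $R_g$.
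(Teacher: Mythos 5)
Your proof follows the same route as the paper's: the paper presents this corollary without a written proof, declaring it an immediate consequence of Theorem \ref{Th:MainQSystems}, Proposition \ref{Proposition:Coercivity}, and Theorem \ref{Theorem:Q-curvatureRicciSolitons}, and your write-up fills in exactly those intermediate steps (the Einstein factorization you cite is the content behind \cite[Proposition 4]{Ro}, which is the reference the paper uses for coercivity on Einstein manifolds elsewhere, e.g.\ in the proof of Corollary \ref{Corollary:NodalSolutionsPaneitz}).

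One caveat worth spelling out, which you brush against but do not resolve: to conclude $Q_g>0$ from Theorem \ref{Theorem:Q-curvatureRicciSolitons} you need not only radially positive Ricci curvature but also the additional pointwise curvature inequalities in that theorem's hypotheses (the $N>4$ clause: $(2\mathbf{a}-\mathbf{c})|Ric_g|_g^2>2\mathbf{a}\,Ric_g(\nabla f,\nabla f)$ and $R_g>2\mathbf{a}/\mathbf{b}$), and to invoke Proposition \ref{Proposition:Coercivity} you need $m=2$ and $N\geq 6$. None of these restrictions appear in the corollary's stated hypotheses, which ask only for radial positivity, arbitrary $m$, and $N>2m$. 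So, as literally stated, the Ricci-soliton branch of the corollary is not an immediate consequence of the cited results; your chain of implications is sound, but it proves a version of the corollary with the hypotheses of Theorem \ref{Theorem:Q-curvatureRicciSolitons} and Proposition \ref{Proposition:Coercivity} imported. This is a defect in the paper's formulation rather than a flaw introduced by you, but a careful write-up should make the hypotheses explicit rather than letting them silently pass through.
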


\smallskip 

Our work is organized as follows. In Section \ref{Sec:Variational Setting}, we describe the variational setting in order to prove the existence of least energy $\Gamma$-invariant solutions to the problem \eqref{eq:dirichlet}. Next, in Section \ref{sec:system}, we give the variational setting to study system \eqref{Eq:Q Systems} and prove Theorem \ref{Th:MainQSystems} and Corollary \ref{Corollary:NodalSolutionsPaneitz}. In Section \ref{Section:OneDimensionalReduction}, we describe how hypotheses \emph{\hyperref[Gamma:Cohomogeneity]{$(\Gamma1)$}} and \emph{\hyperref[Gamma:MetricDecomposition]{$(\Gamma3)$}} allow us to reduce the partition problem to a simpler one dimensional problem. In Section \ref{Section:Segregation}, we study the segregation phenomenon that gives the description of the domains and $\Gamma$-invariant functions that solve the optimal partition problem, showing Theorem \ref{Theorem:OptimalPartitionSymmetry}. As an application of Theorem \ref{Theorem:OptimalPartitionSymmetry} with $m=1$, we prove Corollary \ref{Corollary:YamabeProblem}. We give several examples where hypotheses \emph{\hyperref[Gamma:Cohomogeneity]{$(\Gamma1)$}} to \emph{\hyperref[Gamma:MetricDecomposition]{$(\Gamma3)$}} hold true in Section \ref{Section:Examples}. Finally, in Section \ref{Section:Q-curvatureRicciSolitons}, we compute the $Q$-curvature of shrinking Ricci solitons, proving Theorem \ref{Theorem:Q-curvatureRicciSolitons}.

\section{Symmetries and least energy solutions}\label{Sec:Variational Setting}

In this section, we study the existence of least energy  solutions to problem \eqref{eq:dirichlet}.

From now on, $\Omega$ will denote either $M$ or an open, connected $\Gamma$-invariant subset of $M$ with smooth boundary, $\Vert\cdot\Vert_p$ will denote the usual norm in $L_g^p(\Omega)$, $p\geq1$. For $u\in \mathcal{C}^\infty(M)$, the $k$-th covariant derivative of $u$ will be denoted by $\nabla^ku$ and we define its norm as the function $\vert \nabla^ku\vert_g:M\rightarrow\mathbb{R}$ given by
\[
\vert \nabla^ku\vert_g^2 := \nabla^{\alpha_1}\cdots\nabla^{\alpha_k}\nabla_{\alpha_1}\cdots\nabla_{\alpha_k}u, 
\]
where we used the Einstein notation convention.

The Sobolev space $H_{0,g}^m(\Omega)$ is the closure of $\mathcal{C}_c^\infty(\Omega)$ under the norm
\[
\Vert u\Vert_{H^{m}}:=\left(\sum_{i=0}^m \Vert\nabla^i u\Vert_2^2\right)^{1/2} = \left(\sum_{i=0}^m \int_\Omega \vert\nabla^i u\vert_g^2 dV_g\right)^{1/2},
\]
where, with some abuse of notation, $\Vert \nabla^{i}u\Vert_2:=\Vert \, \vert \nabla^{i}u\vert_g \Vert_2$
Notice that in case $\Omega=M$, then $\mathcal{C}_c^\infty(M)=\mathcal{C}^\infty(M)$ and $H_{0,g}^m(M)=H_g^m(M)$; if $\Omega\neq M$, then $H_{0,g}^m(\Omega)$ is a closed subspace of $H^m_g(\Omega)$. 
\newline

Let $P_g$ be the corresponding GJMS-operator in $(M,g)$ of order $m$. For each $k\in\{0,1,\ldots,m-1\}$, there exists a symmetric $T^0_{2k}$-tensor field on $M$, which we will denote by $A_{(k)}(g)$, such that the operator $P_g$ can be written as
\[
P_g = (-\Delta)^m_g + \sum_{k=0}^{m-1}(-1)^{k}\nabla^{j_k\cdots j_1}\left( A_{(k)}(g)_{i_k\cdots i_1 j_1\cdots j_k}\nabla^{i_1\cdots i_k} \right),
\]
where the indices are raised via the musical isomorphism. In particular, for any $u,v\in \mathcal{C}_c^\infty(\Omega)$, integration by parts yields that
\[
\begin{split}
&\int_\Omega uP_g vdV_g \\
&= \begin{cases}
	\int_\Omega\left[ \Delta_g^{m/2}u\Delta^{m/2}v + \sum_{k=0}^{m-1}A_k(g)(\nabla_g^k u,\nabla_g^k v) \right] dV_g, & m \text{ even}, \\
	\int_\Omega\left[\langle \nabla_g\Delta_g^{(m-1)/2}u, \nabla_g\Delta_g^{(m-1)/2}v\rangle_g + \sum_{k=0}^{m-1}A_k(g)(\nabla_g^k u,\nabla_g^k v) \right] dV_g, & m \text{ odd}.
\end{cases}
\end{split}
\]
See \cite[Proposition 1]{Ro} for the details.

As a consequence, the bilinear form $(u,v)\mapsto\int_{\Omega}uP_gv dV_g$ can be extended to a continuous symmetric bilinear form on $H^m_{0,g}(\Omega)$. {When $P_g$ is coercive, this bilinear form is actually a well-defined interior product on $H_{0,g}^m(\Omega)$ that induces a norm equivalent to $\Vert\cdot\Vert_{H^m}$ (see \cite[Proposition 2]{Ro}). We will denote this interior product and norm by $\langle\cdot,\cdot\rangle_{P_g}$ and $\Vert\cdot\Vert_{P_g}$ respectively, and endow $H_g^m(\Omega)$ with it in what follows. Notice that, by definition,
\[
\langle u,v \rangle_{P_g}:= \int_\Omega u P_g v dV_g, \quad\text{and}\quad \Vert u\Vert^2_{P_g}=\int_\Omega u P_g u dV_g, 
\]
for every $u,v\in\mathcal{C}^\infty(\Omega)$.
\newline

The group $\text{Isom}(M,g)$ acts on $H_g^m(M)$ in the usual way: 
\[
\gamma u:= u\circ\gamma^{-1},\qquad u\in H_g^m(M),\   \gamma\in\text{Isom}(M,g).
\]
Every element $\gamma\in\text{Isom}(M,g)$ induces a linear map
\[
\gamma:H_g^{m}(M)\rightarrow H_g^m(M),\quad u\mapsto u\circ\gamma^{-1}.
\]

We next show that the norm is invariant under the action of isometries.

\begin{lemma}\label{Lemma:Pg symmetry invariance}
	For every $\gamma\in\text{Isom}(M,g)$ and every $u\in\mathcal{C}^\infty(M)$
	\[
	P_g(u\circ\gamma)=(P_{g}u)\circ\gamma.
	\]
	In particular, $\gamma:H_g^{m}(M)\rightarrow H_g^m(M)$ is a linear isometry.
\end{lemma}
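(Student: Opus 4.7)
The plan is to derive both assertions from the naturality property $(P_2)$ together with the fact that an isometry preserves the Riemannian volume and the Levi-Civita connection.

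First, I would apply $(P_2)$ directly with $\varphi=\gamma\in\text{Isom}(M,g)$. Since $\gamma$ is an isometry, $\gamma^{\ast}g=g$, so $P_{\gamma^{\ast}g}=P_g$. For a scalar function $u$ the pullback reduces to composition, $\gamma^{\ast}u=u\circ\gamma$, so the naturality identity $\gamma^{\ast}(P_g u)=P_{\gamma^{\ast}g}(\gamma^{\ast}u)$ collapses to
\[
(P_g u)\circ\gamma = P_g(u\circ\gamma),
\]
which is the first claim. This proves the identity on $\mathcal{C}^\infty(M)$; by density of $\mathcal{C}^\infty(M)$ in $H_g^m(M)$ and continuity of $\gamma^{\ast}$ and $P_g$ on the appropriate spaces, the statement extends where needed.

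For the isometry statement, I would show that each summand defining the Sobolev norm \eqref{Eq:StandardSobolevNorm} is invariant under $u\mapsto \gamma u = u\circ\gamma^{-1}$. Because $\gamma$ is an isometry, it intertwines the Levi-Civita connections, and hence $\nabla^{i}(u\circ\gamma^{-1})$ is the pullback of $\nabla^{i}u$ by $\gamma^{-1}$. Taking pointwise $g$-norms and using that $\gamma^{-1}$ preserves $g$ yields the scalar identity
\[
|\nabla^{i}(u\circ\gamma^{-1})|_g(x) = |\nabla^{i}u|_g(\gamma^{-1}x), \qquad x\in M.
\]
Combined with $\gamma^{\ast}dV_g=dV_g$, the change of variables $y=\gamma^{-1}x$ gives $\int_M|\nabla^{i}(u\circ\gamma^{-1})|_g^2\, dV_g = \int_M |\nabla^{i}u|_g^2\, dV_g$ for every $i=0,\ldots,m$. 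Summing and taking square roots shows that $\gamma:H^m_g(M)\to H^m_g(M)$ is a linear isometry.

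I do not foresee a real obstacle: once the convention $\gamma^{\ast}u=u\circ\gamma$ for scalars is in place, the first identity is an immediate consequence of $(P_2)$, and the second is a standard transformation-of-integrals argument. The only point requiring a bit of care is justifying the pointwise equality $|\nabla^{i}(u\circ\gamma^{-1})|_g = |\nabla^{i}u|_g\circ\gamma^{-1}$, which is transparent in index notation since $\gamma$ preserves both the metric components and the Christoffel symbols at corresponding points.
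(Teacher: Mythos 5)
Your proof of the first identity is exactly the paper's argument: apply naturality $(P_2)$ with $\varphi=\gamma$ and use $\gamma^\ast g=g$ to cancel the metric change.

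For the ``linear isometry'' claim, your route differs from the paper's, and the difference matters. You establish invariance of the \emph{standard} Sobolev norm $\Vert\cdot\Vert_{H^m}$ of \eqref{Eq:StandardSobolevNorm}, via the pointwise identity $|\nabla^{i}(u\circ\gamma^{-1})|_g = |\nabla^{i}u|_g\circ\gamma^{-1}$ and change of variables. That computation is correct, but it is not the norm the paper has just endowed $H_g^m(M)$ with: in the paragraph preceding the lemma, $H_g^m$ is equipped with $\Vert u\Vert_{P_g}^2 = \int_M u\,P_g u\,dV_g$, and that is the norm the functional $\mathcal{J}$ in Section~\ref{sec:system} is built from and whose $\Gamma$-invariance the lemma is cited for. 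The two norms are equivalent but not proportional, so a map isometric for one need not be isometric for the other. The paper's proof gets the $\Vert\cdot\Vert_{P_g}$-isometry directly as a corollary of the first part, by writing
\[
\Vert \gamma u\Vert_{P_g}^2 =\int_M (u\circ\gamma^{-1})P_g(u\circ\gamma^{-1})\,dV_g =\int_M (u\circ\gamma^{-1})\bigl((P_g u)\circ\gamma^{-1}\bigr)\,dV_g = \int_M u\,P_gu\,dV_g =\Vert u\Vert^2_{P_g},
\]
using only the naturality identity you already proved and the invariance of integrals under isometries. You have all the ingredients; you just aimed the second half at the wrong norm. The $H^m$-norm invariance you computed is a true and useful auxiliary fact (it underlies the claim that $\gamma$ is also an isometry on $L^p_g$, which the paper records right after the lemma), but it should not replace the $P_g$-norm argument here.
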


\begin{proof}
	Let $u\in\mathcal{C}^\infty(M)$ and $\gamma\in\text{Isom}(M,g)$. Then $\gamma:M\rightarrow M$ is a diffeomorphism and, by the naturality of $P_g$, property \hyperref[Propoerty:Pg naturality]{($P_2$)} in the introduction, we obtain for any isometry $\gamma\in \text{Isom}_g(M,g)$ that
	\[
	P_g(u\circ\gamma) = (P_g\circ\gamma^\ast)(u)=P_{\gamma^\ast g}\circ\gamma^\ast(u)=(\gamma^\ast\circ P_g)(u)=P_g(u)\circ\gamma,
	\]
	where $\gamma^\ast g$ denotes the pullback metric.
	
	Now, to see that $\gamma$ induces a linear isometry, recall that if $\gamma\in\text{Isom}(M,g)$, then 
	\begin{equation}\label{Eq:Integral invariance isometries}
		\int_M u\circ\gamma\; dV_g = \int_M u\;  dV_g,
	\end{equation}
	(Cf. \cite[Théorème 4.1.2]{HebeyBook1997}).
	Then, for every $u\in\mathcal{C}^\infty(M)$,
	\[
	\Vert \gamma u\Vert_{P_g}^2 =\int_M (u\circ\gamma^{-1})P_g(u\circ\gamma^{-1}) dV_g =\int_M (u\circ\gamma^{-1})P_g(u)\circ\gamma^{-1} \,dV_g = \int_M uP_gu \,dV_g =\Vert u\Vert^2_{P_g}.
	\]
Density of $\mathcal{C}^\infty(M)$ in $H_g^m(M)$ yields the result.
\end{proof}
\bigskip

Every $\gamma\in \text{Isom}(M,g)$ also induces a linear map $\gamma:L^p_g(M)\rightarrow L^p_g(M)$, $p\geq 1$, given by $u\mapsto u\circ\gamma^{-1}$, which is also an isometry, thanks to \eqref{Eq:Integral invariance isometries}.

\bigskip

Let $\Gamma$ be any closed subgroup of Isom$(M,g)$ such that $\dim\Gamma x\leq N-1$ for any $x\in M$. From now on, suppose that $\overline{\Omega}$ is $\Gamma$-invariant, namely, if $x\in\overline{\Omega}$, then $\Gamma x\subset\overline{\Omega}$. In this way, for any $u\in \mathcal{C}_c^\infty(\Omega)$ and every isometry $\gamma\in\Gamma$, it follows that
\[
\int_\Omega  u\circ\gamma\; dV_g = \int_{\Omega}  u\; dV_g
\]
and $\gamma$ also induces  linear isometries
\begin{equation}\label{Eq:LinearIsometryGamma}
    \gamma:H_{0,g}^m(\Omega)\rightarrow H_{0,g}^m(\Omega), \quad \text{ and }\quad \gamma:L_g^p(\Omega)\rightarrow L_g^p(\Omega).
\end{equation}
for any $p\geq 1$.

We define the Sobolev space of $\Gamma$-invariant functions as
\[
H_{0,g}^m(\Omega)^\Gamma:=\{ u\in H_{0,g}^m(\Omega) \;:\; u \text{ is }\Gamma\text{-invariant} \}.
\]
This is a closed subspace of $H_{0,g}^m(\Omega)$. In fact, if $\mathcal{C}_c^\infty(\Omega)^\Gamma$ denotes the space of smooth functions with compact support in $\Omega$ which are $\Gamma$-invariant, then $H_{0,g}^m(\Omega)$ coincides with the closure of this space under the Sobolev norm $\Vert \cdot \Vert_{P_g}$. As the dimension of any $\Gamma$ orbit is strictly less than $N$, the space $H_{0,g}^m(\Omega)^\Gamma$ is infinite dimensional, thanks to the existence of $\Gamma$-invariant partitions of the unity (Cf. \cite[Theorem 4.3.1]{Palais1961} and also \cite[Claim 3.66]{AlexBettiol}).

We will need the following Sobolev embedding result. 

\begin{lemma}\label{Lemma:Sobolev}
	Let $\Gamma$ be a closed subgroup of $\text{Isom}(M,g)$, $\kappa:=\min\{\dim\Gamma x\; : \; x\in M\}$, and let $\Omega$ be a $\Gamma$-invariant domain. Define 
	\[
	2^\ast_{m,\Gamma} :=
	\begin{cases}
		\frac{2(N-\kappa)}{(N-\kappa)-2m}, & N-\kappa>2m,\\
		\infty, & N-\kappa\leq 2m.
	\end{cases}
	\]
	Then
	\[
	H_{0,g}^m(\Omega)^\Gamma\hookrightarrow L_g^r(\Omega)
	\]
	is continuous and compact for every $1\leq r < 2^\ast_{m,\Gamma}$. 
\end{lemma}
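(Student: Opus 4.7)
The plan is to reduce the equivariant Sobolev embedding to the classical Rellich--Kondrachov theorem on slices of dimension at most $N-\kappa$. Since $\overline{\Omega}$ is $\Gamma$-invariant and $H^m_{0,g}(\Omega)$ is defined as the closure of $\mathcal{C}_c^\infty(\Omega)$, extension by zero identifies $H^m_{0,g}(\Omega)^\Gamma$ isometrically with a closed subspace of $H^m_g(M)^\Gamma$ while preserving the $L^r_g$-norm. It therefore suffices to establish the embedding $H^m_g(M)^\Gamma\hookrightarrow L^r_g(M)$ and restrict back to $\Omega$.

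First I would invoke the slice theorem for proper isometric actions: around every $x\in M$ there is a $\Gamma$-invariant tubular neighborhood $U_x$ together with a $\Gamma$-equivariant diffeomorphism $U_x\cong \Gamma\times_{\Gamma_x} S_x$, where $S_x$ is an open ball in the slice at $x$ with $\dim S_x=N-\dim\Gamma x\leq N-\kappa$. Compactness of $M$ yields a finite subcover $U_1,\dots,U_J$, and Palais' theorem on smoothing of group actions furnishes a $\Gamma$-invariant partition of unity $\{\phi_j\}_{j=1}^J\subset \mathcal{C}_c^\infty(M)^\Gamma$ subordinate to it. Since $u\mapsto\phi_j u$ is bounded on $H^m_g(M)^\Gamma$, it is enough to prove the claim for $\Gamma$-invariant functions supported in a single tube $U_j$.

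Inside such a tube, any $\Gamma$-invariant function $u$ is completely determined by its restriction $\tilde{u}$ to the slice, which is $\Gamma_{x_j}$-invariant on $S_j$. Viewing $U_j$ as a bundle over the orbit $\Gamma x_j$ with typical fiber $S_j$, applying Fubini along the fibers, and using that $u$ is constant along $\Gamma$-orbits yields the equivalences
\[
\|u\|_{L^r_g(U_j)}^r\;\asymp\;\int_{S_j}|\tilde{u}(s)|^r\,ds_{S_j},\qquad \|u\|_{H^m_g(U_j)}^2\;\asymp\;\|\tilde{u}\|_{H^m(S_j)}^2,
\]
with constants depending only on $U_j$. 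Because $\dim S_j\leq N-\kappa$ and $d\mapsto 2d/(d-2m)$ is strictly decreasing on $(2m,\infty)$, the critical Sobolev exponent of $S_j$ is at least $2^\ast_{m,\Gamma}$ (and is infinite when $\dim S_j\leq 2m$). Hence the classical Rellich--Kondrachov theorem provides a compact embedding $H^m(S_j)\hookrightarrow L^r(S_j)$ for every $r<2^\ast_{m,\Gamma}$; reversing the equivalence establishes compactness on $U_j$, and reassembly via the partition of unity yields the global result.

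The main obstacle I foresee is justifying the $H^m$ norm equivalence in the third step, since expressing the full covariant derivatives $\nabla^i u$ on $U_j$ in terms of derivatives of $\tilde{u}$ on $S_j$ must account for the associated-bundle geometry, including the second fundamental form of the orbit $\Gamma x_j$ and the warping coefficients of the orbit metric. To handle this I would work in $\Gamma_{x_j}$-equivariant adapted frames, decomposing each frame into a block tangent to the orbit and a complementary block transverse to it; because $u$ is $\Gamma$-invariant, every directional derivative along an orbit vector vanishes, so only the transverse components of $\nabla^i u$ contribute, weighted by smooth coefficients whose sup norms are controlled uniformly on the compact closure $\overline{U_j}$. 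This delivers the required $H^m$ equivalence and closes the argument.
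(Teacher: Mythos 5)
Your proposal takes a genuinely different route from the paper. The paper simply cites the $m=1$ case from Ivanov--Nazarov (their Theorem 2.4) and then bootstraps to $m>1$ via Aubin's Proposition 2.11; you instead attempt a self-contained proof via the slice theorem, an invariant partition of unity, and a reduction to classical Rellich--Kondrachov on the slices. That overall architecture is sound (it is essentially how the cited theorem is proved), the reduction to $\Omega=M$ by zero extension is correct, and the dimensional count $\dim S_j = N - \dim\Gamma x_j \le N-\kappa$ is right.

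However, the step you yourself flag as delicate --- the equivalence $\|u\|_{H^m_g(U_j)} \asymp \|\tilde u\|_{H^m(S_j)}$ --- is justified by a claim that is false, and as stated it would not deliver the estimate. You assert that, because every first-order directional derivative of a $\Gamma$-invariant $u$ along an orbit vector vanishes, only the slice-transverse components of $\nabla^i u$ survive, with bounded coefficients. This fails already for $i=2$: if $X,Y$ are tangent to the orbit and $u$ is $\Gamma$-invariant, then
\[
\nabla^2 u(X,Y) \;=\; -\,(\nabla_X Y)(u) \;=\; -\,\mathrm{II}(X,Y)\!\cdot\!\nabla u ,
\]
which is generically nonzero, and near a singular orbit the relevant inverse-metric factors $g^{yy}$ blow up like (orbit radius)$^{-2}$. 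Concretely, for the cohomogeneity-one $T^2$-action on $\mathbb{S}^3$ with $g=d\rho^2+\sin^2\!\rho\,d\psi^2+\cos^2\!\rho\,d\beta^2$ and $u=v(\rho)$, the orbit--orbit component contributes $\tfrac{\cos^2\rho}{\sin^2\rho}\,(v')^2\sim\rho^{-2}(v')^2$ to $|\nabla^2u|_g^2$, which is neither a ``transverse'' component nor bounded. The norm equivalence you want is in fact true, but for a different reason: this unbounded orbit-tangent contribution matches exactly the angular part $\rho^{-2}(v')^2$ of $|\mathrm{Hess}_{S_j}\tilde u|^2$ for the $\Gamma_{x_j}$-radial function $\tilde u$, and the residual discrepancies are lower order and absorbable. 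To close the argument correctly you need to compare the full Christoffel-corrected expressions on both sides, term by term, acknowledging that both carry the same singular weights near the singular strata; the ``only transverse components contribute, with bounded coefficients'' shortcut does not work and, if taken at face value, would miss the dominant singular term in $\|\tilde u\|_{H^m(S_j)}$. Alternatively, you could sidestep the issue entirely by adopting the paper's strategy of invoking the known $m=1$ equivariant embedding and bootstrapping.
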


\begin{proof}
The case for $m=1$ is just Theorem 2.4 in  \cite{IvanovNazarov2007}.The case for $m>1$ follows from a bootstrap argument as in Proposition 2.11 in \cite{AubinBook}.
\end{proof}

In what follows, we will suppose that $\Gamma$ satisfies condition (\hyperref[Gamma:DimensionOrbits]{$\Gamma2$}). Under this hypothesis, the existence of least energy $\Gamma$-invariant solutions to  \eqref{eq:dirichlet} follows directly from standard variational methods using the Palais' Principle of Symmetric Criticality \cite{Palais1979} together with Lemma \ref{Lemma:Sobolev}. For the reader's convenience, we sketch the proof of a slightly more general result, namely, we  show the existence of $\Gamma$-invariant solutions to the problem
\begin{equation} \label{Problem:DirichletBoundary}
	\begin{cases}
		P_g u = |u|^{p-2}u, & \text{ in } \Omega,\\
		u\in H_{0,g}^m(\Omega)^\Gamma,
	\end{cases}
\end{equation}
where $p\in (2,2_m^\ast]$.

Consider the functional
\[
J_\Omega:H_{0,g}^m(\Omega)\rightarrow\mathbb{R}, \qquad J_\Omega(u):=\frac{1}{2}\Vert u\Vert_{P_g}^2 - \frac{1}{p}\int_\Omega \vert u\vert^p\; dV_g.
\]

Given that $\Vert \cdot\Vert_{P_g}$ is a well defined norm equivalent to the standard norm $\Vert \cdot \Vert_{H^m(\Omega)}$, Sobolev inequality yields that it is a $C^1$ functional for any $p\in(2,2_m^\ast]$. From \eqref{Eq:LinearIsometryGamma}, this functional is $\Gamma$-invariant, namely it satisfies that
\[
J_\Omega(u\circ\gamma^{-1})=J_\Omega(u).
\]
Hence, due to Palais' Principle of Symmetric Criticality \cite{Palais1979}, the critical points of $J_\Omega$ restricted to $H_{0,g}^m(\Omega)^\Gamma$ correspond to the $\Gamma$-invariant solutions to the problem \eqref{Problem:DirichletBoundary}. The nontrivial ones belong to the set
\[
\mathcal{M}_\Omega^\Gamma := \{u\in H_{0,g}^m(\Omega) \;:\; u\neq 0, J'_{\Omega}(u)u=0 \},
\]
which is a $C^1$ codimension one Hilbert manifold in $H_{0,g}^m(\Omega)^\Gamma$. Notice that
\[
J_{\Omega}(u) = \frac{m}{N}\Vert u\Vert_{P_g},\qquad u\in \mathcal{M}_\Omega^\Gamma.
\]
Thanks to the Sobolev inequalities \cite[Theorem 2.30]{AubinBook}, $\mathcal{M}_\Omega^\Gamma$ is closed and
\[
0<c_\Omega^\Gamma=\inf_{u\in\mathcal{M}_\Omega^\Gamma} J_\Omega(u).
\]

We say that $J_\Omega$ satisfies condition $(PS)_c^\Gamma$ in $H_{0,g}^m(\Omega)^\Gamma$ if every sequence $u_k\in H_{0,g}^m(\Omega)^\Gamma$ such that $J_\Omega(u_k)\rightarrow c$ and $\nabla J_{\Omega}(u_k)\rightarrow 0$ in $H_{0,g}^m(\Omega)^\Gamma$ as $k\rightarrow\infty$, has a convergent subsequence.

As $\Gamma$ satisfies (\hyperref[Gamma:DimensionOrbits]{$\Gamma2$}), then $\kappa\geq 1$ and $p\leq 2_m^\ast < 2_{m,\Gamma}^\ast$. Hence, by Lemma \ref{Lemma:Sobolev}, $J_\Omega$ satisfies condition $(PS)_{c_\Omega^\Gamma}^\Gamma$ and Theorem 7.12 in \cite{AmbrosettiMalchiodiBook} yields that $c_\Omega^\Gamma$ is attained. Thus, there exists a least energy $\Gamma$-invariant solution to the problem \eqref{Problem:DirichletBoundary}.

We summarize this analysis in the following proposition.

\begin{prop}\label{Proposition:ExistenceLeastEnergySolution}
If $\Gamma$ satisfies \emph{(\hyperref[Gamma:DimensionOrbits]{$\Gamma2$})} and if $P_g$ is coercive, then, for any $p\in(2,2_m^\ast]$ the problem \eqref{Problem:DirichletBoundary} admits a least energy $\Gamma$-invariant solution.
\end{prop}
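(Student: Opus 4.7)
My plan is to follow the standard direct variational scheme: associate a $C^1$ energy functional to \eqref{Problem:DirichletBoundary}, exploit the symmetry to gain compactness, and then minimize over the symmetric Nehari manifold. Since $P_g$ is coercive, $\Vert\cdot\Vert_{P_g}$ is an equivalent norm on $H_{0,g}^m(\Omega)$, so the functional
\[
J_\Omega(u) := \frac{1}{2}\Vert u\Vert_{P_g}^2 - \frac{1}{p}\int_\Omega |u|^p\, dV_g
\]
is well defined and of class $C^1$ on $H_{0,g}^m(\Omega)$ for every $p\in(2,2_m^\ast]$, with critical points precisely the weak solutions of \eqref{Problem:DirichletBoundary}. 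By Lemma \ref{Lemma:Pg symmetry invariance} and \eqref{Eq:LinearIsometryGamma}, $J_\Omega$ is $\Gamma$-invariant, so Palais' Principle of Symmetric Criticality reduces the problem to finding critical points of the restriction $J_\Omega|_{H_{0,g}^m(\Omega)^\Gamma}$.

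Next I would introduce the symmetric Nehari manifold
\[
\mathcal{M}_\Omega^\Gamma := \{u\in H_{0,g}^m(\Omega)^\Gamma : u\neq 0,\ J'_\Omega(u)u = 0\},
\]
on which $J_\Omega(u) = \bigl(\tfrac{1}{2}-\tfrac{1}{p}\bigr)\Vert u\Vert_{P_g}^2$, which reduces to $\tfrac{m}{N}\Vert u\Vert_{P_g}^2$ when $p=2_m^\ast$. Standard arguments using the continuous Sobolev embedding furnished by Lemma \ref{Lemma:Sobolev} show that $\mathcal{M}_\Omega^\Gamma$ is a $C^1$ codimension-one submanifold of $H_{0,g}^m(\Omega)^\Gamma$, that every nontrivial critical point of $J_\Omega|_{H_{0,g}^m(\Omega)^\Gamma}$ lies on it, and that $c_\Omega^\Gamma := \inf_{\mathcal{M}_\Omega^\Gamma} J_\Omega > 0$ by the Sobolev inequality.

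The crucial step is verifying the Palais-Smale condition $(PS)_{c_\Omega^\Gamma}^\Gamma$ for the restricted functional. This is where the symmetry hypothesis \emph{(\hyperref[Gamma:DimensionOrbits]{$\Gamma2$})} is decisive: because every orbit has dimension $\kappa\geq 1$, Lemma \ref{Lemma:Sobolev} yields $p\leq 2_m^\ast < 2_{m,\Gamma}^\ast$, so the embedding $H_{0,g}^m(\Omega)^\Gamma \hookrightarrow L_g^p(\Omega)$ is compact. A standard argument then shows that any Palais-Smale sequence in $H_{0,g}^m(\Omega)^\Gamma$ is bounded by the Nehari constraint, converges weakly in $H_{0,g}^m(\Omega)^\Gamma$ and strongly in $L_g^p(\Omega)$ along a subsequence, and the $L^p$-convergence together with the equation forces strong convergence in $H_{0,g}^m(\Omega)^\Gamma$. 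With $(PS)$ verified, Theorem 7.12 of \cite{AmbrosettiMalchiodiBook} delivers a minimizer of $J_\Omega$ on $\mathcal{M}_\Omega^\Gamma$, which is the desired least energy $\Gamma$-invariant solution.

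The main obstacle is, as usual, the critical exponent case $p=2_m^\ast$, where compactness fails in the ambient space due to concentration and Struwe-type bubbling. The subgroup condition \emph{(\hyperref[Gamma:DimensionOrbits]{$\Gamma2$})} is precisely what sidesteps this: it effectively raises the critical exponent to $2_{m,\Gamma}^\ast$ (strictly larger than $2_m^\ast$, and possibly infinite), making any minimizing sequence in the symmetric subspace relatively compact and restoring the direct variational method.
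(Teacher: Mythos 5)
Your proposal is correct and follows essentially the same route as the paper: same energy functional, reduction to the symmetric subspace via Palais' Principle of Symmetric Criticality, minimization over the Nehari manifold $\mathcal{M}_\Omega^\Gamma$, verification of $(PS)_{c_\Omega^\Gamma}^\Gamma$ via the compact embedding from Lemma \ref{Lemma:Sobolev} (which is where $(\Gamma2)$ enters), and the same citation of Theorem 7.12 in \cite{AmbrosettiMalchiodiBook} to conclude. The only discrepancy is cosmetic: the paper's displayed identity reads $J_\Omega(u)=\frac{m}{N}\Vert u\Vert_{P_g}$ on the Nehari manifold, whereas you correctly write $\frac{m}{N}\Vert u\Vert_{P_g}^2$; the missing exponent in the paper is a typo.
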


To our knowledge, this is the first existence result of symmetric least energy solutions to the homogeneous Dirichlet boundary problem \eqref{Problem:DirichletBoundary}. Another result for non-homogeneous Dirichlet boundary conditions to problems involving the $GJMS$-operators, in the absence of symmetries, can be found in \cite{BekiriBenalili2018,BekiriBenalili2019,BekiriBenalili2022}.

\begin{remark}
With slight modifications, the same result is true for operators given by a linear combination of Laplacians, i.e., for operators having the form
\begin{equation*}
\hat{P}_g = 
\begin{cases}
\sum_{j=0}^{m/2}a_i(-\Delta)^{j}, & m \text{ even}, \\
\sum_{j=0}^{(m-1)/2}a_i(-\Delta)^{j}, & m \text{ odd},
\end{cases}
\end{equation*}
where $a_0\in \mathcal{C}^\infty(M)^\Gamma$ is positive, $a_{m/2}>0$ and $a_j\geq 0$ are constants for $j=1,\ldots,m-1$ if $m$ is even, and $a_{(m-1)/2}>0$ and $a_j\geq 0$ are constants for $j=1,\ldots,(m-3)/2$ if $m$ is odd. This is true because the Principle of Symmetric Criticality can be applied by noticing that
\[
\Delta^{i+1}(u\circ\gamma) =  \Delta^{i+1}(u)\circ\gamma
\]
for any $u\in \mathcal{C}_c^\infty(\Omega)^\Gamma$ and every isometry $\gamma\in\Gamma$ (see, for instance, \cite[Remark 6.9 (c)]{AmannEscherBook}).\qed

\end{remark}


\section{The polyharmonic system}
\label{sec:system}

We next study the system \eqref{Eq:Q Systems}.   Fix $\ell\in\mathbb{N}$ and consider the product space $\left( H_g^m(M) \right)^\ell$
endowed with the norm
\begin{equation*}
	\Vert \overline{u}\Vert :=\Vert(u_1,\ldots,u_\ell)\Vert := \Big(\sum_{i=1}^\ell \Vert u_i \Vert_{P_g}^2\Big)^{1/2}.
\end{equation*}
Let $\mathcal{J}:\left( H_g^m(M) \right)^\ell\rightarrow\R$ be the functional given by
\[
\mathcal{J}(\overline{u}):=\frac{1}{2}\sum_{i=1}^\ell\Vert u_i\Vert_{P_g}^2 - \frac{1}{2^*_m}\sum_{i=1}^\ell\int_{M}\nu_i\vert u_i\vert^{2^*_m} - \frac{1}{2}\sum_{\substack{i,j=1 \\ j\neq i}}^\ell\int_{M}\eta_{ij}\vert u_j\vert^{\alpha_{ij}}\vert u_i\vert^{\beta_{ij}}.
\]
This is a $C^1$ functional and its partial derivatives are given by
\begin{align*}
	&\partial_i\mathcal{J}(\overline u)v_i=\langle u_i, v_i\rangle_{P_g} - \int_{M} \nu_i|u_i|^{2_m^*-2}u_iv_i - \sum_{\substack{j=1 \\ j\neq i}}^\ell\int_{M}\eta_{ij}\beta_{ij}|u_j|^{\alpha_{ij}}|u_i|^{\beta_{ij}-2}u_iv_i, 
\end{align*}
for every $\overline{v}\in \left( H_g^m(M) \right)^\ell$ and every $i=1,\ldots,\ell.$
Hence, solutions to the system \eqref{Eq:Q Systems} correspond to the critical points of $\mathcal{J}.$

Fix now a closed subgroup $\Gamma$ of isometries satisfying (\hyperref[Gamma:DimensionOrbits]{$\Gamma2$}) and define $\cH:=(H^m_g(M)^\Gamma)^\ell$. This is a closed subspace of $\left(H_g^1(M)\right)^\ell$.  By Lemma \ref{Lemma:Pg symmetry invariance}, $\mathcal{J}$ is $\Gamma$-invariant and by the Principle of Symmetric Criticality \cite{Palais1979}, the critical points of $\mathcal{J}$ restricted to $\mathcal{H}$ are the $\Gamma$-invariant solutions to the system \eqref{Eq:Q Systems}. Hence, we can restrict ourselves to seek critical points of $\mathcal{J}$ in $\mathcal{H}$. Observe that the fully nontrivial ones belong to the set
\[
\mathcal{N}:=\{\overline{u}\in\mathcal{H}\;:\; u_i\neq 0,\ \partial_i\mathcal{J}(\overline{u})u_i = 0, \text{ for each }i=1,\ldots,\ell\}.
\]

It is readily seen that 
\begin{equation} \label{eq:energy_nehari}
	\cJ(\overline{u})=\frac{m}{N}\|\overline{u}\|^2,\qquad\text{if \ }\overline{u}\in\cN.
\end{equation}
\medskip

\begin{lemma} \label{lem:away_froM_{d/2}}
	There exists $d_0>0$, independent of $\eta_{ij}$, such that $\min_{i=1,\ldots,\ell}\|u_i\|\geq d_0$ if $\overline{u}=(u_1,\ldots,u_\ell)\in \cN$. Thus, $\cN$ is a closed subset of $\cH$ and $\inf_\cN\cJ>0$.
\end{lemma}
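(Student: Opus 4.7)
The plan is to exploit the Nehari-type condition $\partial_i\cJ(\overline{u})u_i=0$ componentwise, together with the sign condition $\eta_{ij}<0$ and a Sobolev embedding, to force each $\|u_i\|_{P_g}$ away from $0$ by a quantity depending only on $\nu_i$ and the Sobolev constant.

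First I would write out, for each fixed $i$, the equation $\partial_i\cJ(\overline{u})u_i=0$ as
\[
\|u_i\|_{P_g}^2=\nu_i\int_M|u_i|^{2_m^\ast}\,dV_g+\sum_{j\neq i}\eta_{ij}\beta_{ij}\int_M|u_j|^{\alpha_{ij}}|u_i|^{\beta_{ij}}\,dV_g.
\]
Since $\eta_{ij}<0$ and $\beta_{ij}>0$, every term in the sum is non-positive, so it can be discarded to obtain the key inequality
\[
\|u_i\|_{P_g}^2\leq \nu_i\int_M|u_i|^{2_m^\ast}\,dV_g.
\]
Because $P_g$ is coercive, the norm $\|\cdot\|_{P_g}$ is equivalent to the standard $H^m$-norm on $H^m_g(M)^\Gamma$, so the Sobolev embedding $H^m_g(M)\hookrightarrow L^{2_m^\ast}_g(M)$ gives a constant $C_S>0$, independent of $\eta_{ij}$ and of $\overline{u}$, with
\[
\int_M|u_i|^{2_m^\ast}\,dV_g\leq C_S\|u_i\|_{P_g}^{2_m^\ast}.
\]
Combining the two displays and dividing by $\|u_i\|_{P_g}^2>0$ (valid since $u_i\neq 0$ on $\cN$) yields
\[
1\leq \nu_i C_S\,\|u_i\|_{P_g}^{\,2_m^\ast-2},
\]
hence $\|u_i\|_{P_g}\geq(\nu_iC_S)^{-1/(2_m^\ast-2)}$. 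Taking $d_0:=\min_{i}(\nu_iC_S)^{-1/(2_m^\ast-2)}>0$ gives a lower bound independent of the coupling parameters $\eta_{ij}$, as claimed.

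For closedness of $\cN$ in $\cH$, I would argue as follows. If $\overline{u}_k\in\cN$ converges to some $\overline{u}$ in $\cH$, then each coordinate satisfies $\|u_{k,i}\|_{P_g}\geq d_0$, so by continuity of the norm the limit satisfies $\|u_i\|_{P_g}\geq d_0>0$; in particular $u_i\neq 0$ for each $i$. Moreover, $\partial_i\cJ$ is continuous on $\cH$ and the pairing $v\mapsto\partial_i\cJ(\overline{u})v$ depends continuously on $\overline{u}$, so the identity $\partial_i\cJ(\overline{u}_k)u_{k,i}=0$ passes to the limit, giving $\partial_i\cJ(\overline{u})u_i=0$. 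Hence $\overline{u}\in\cN$. Finally, for $\overline{u}\in\cN$, the identity \eqref{eq:energy_nehari} gives
\[
\cJ(\overline{u})=\frac{m}{N}\|\overline{u}\|^2=\frac{m}{N}\sum_{i=1}^\ell\|u_i\|_{P_g}^2\geq\frac{m\ell}{N}d_0^2>0,
\]
so $\inf_\cN\cJ\geq\frac{m\ell}{N}d_0^2>0$.

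The only subtle point is verifying that the Sobolev constant $C_S$ does not secretly depend on $\eta_{ij}$; this is immediate because the embedding $H^m_g(M)\hookrightarrow L^{2_m^\ast}_g(M)$ is intrinsic to $(M,g)$, while the equivalence of $\|\cdot\|_{P_g}$ and $\|\cdot\|_{H^m}$ comes only from coercivity of $P_g$. Everything else is routine, and no symmetry input beyond the fact that $\cH$ is a closed subspace is needed for this lemma.
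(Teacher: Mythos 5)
Your proof is correct and follows exactly the same approach as the paper: you discard the non-positive interaction terms in the Nehari identity $\partial_i\cJ(\overline{u})u_i=0$, apply the Sobolev inequality (valid since $\|\cdot\|_{P_g}$ is equivalent to $\|\cdot\|_{H^m}$ by coercivity), and divide by $\|u_i\|_{P_g}^2$ to get the uniform lower bound. The paper states only the chain of inequalities $\|u_i\|_{P_g}^2\le\nu_i\int_M|u_i|^{2_m^\ast}\le C\|u_i\|_{P_g}^{2_m^\ast}$ and says "the result follows"; you have simply supplied the remaining (routine) details for $d_0$, closedness, and the positivity of $\inf_\cN\cJ$.
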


\begin{proof}
	Since $\eta_{ij}<0$ and as the norm $\Vert\cdot\Vert_{P_g}$ is equivalent to the standard norm in $H_g^m(M)^\Gamma$, for any $\overline{u}\in\mathcal{N}$, it follows from the Sobolev inequality the existence of a constant $C>0$ such that
	\begin{align*}
		\|u_i\|_{P_g}^2\leq \int_{M} \nu_i|u_i|^{2_m^*}\leq C\|u_i\|_{P_g}^{2_m^*}\quad \text{ for \ }\overline{u}\in \cN, \ i=1,\ldots,\ell.
	\end{align*}
	The result follows from this inequality.
\end{proof}
\medskip

A fully nontrivial solution $\overline{u}$ to the system \eqref{Eq:Q Systems} satisfying $\cJ(\overline{u})=\inf_\cN\cJ$ is called a \emph{$\Gamma$-invariant least energy solution}. To establish the existence of fully nontrivial critical points of $\cJ$, we follow the variational approach introduced in \cite{ClappSzulkin19}. The proof of Theorem \ref{Th:MainQSystems} is, up to minor modifications, the same as in \cite[Theorem 1.1]{ClappFernandezSaldana2021}, but we sketch it for the reader's convenience.

Given $\overline{u}=(u_1,\ldots,u_\ell)$ and $\overline{s}=(s_1,\ldots,s_\ell)\in(0,\infty)^\ell$, we write
\[
\overline{s}\,\overline{u}:= (s_1u_1,\ldots,s_\ell u_\ell).
\]
Let $\mathcal S:=\{u\in H_g^m(M)^\Gamma:\|u\|=1\}$, define  $\cT:=\mathcal S^\ell$ and 
$$\cU:=\{\overline{u}\in\cT:\overline{s}\,\overline{u}\in\cN\text{ \ for some \ }\overline{s}\in(0,\infty)^\ell\}.$$

The next result is proved exactly in the same way as \cite[Proposition 3.1]{ClappSzulkin19}.

\begin{lemma} \label{lem:U}
	\begin{itemize}
		\item[$(i)$] Let $\overline{u}\in\cT$. If there exists $\overline{s}_{\overline{u}}\in(0,\infty)^\ell$ such that $\overline{s}_{\overline{u}}\overline{u}\in\cN$, then $\overline{s}_{\overline{u}}$ is unique and satisfies
		$$\cJ(\overline{s}_{\overline{u}}\overline{u})=\max_{\overline{s}\in(0,\infty)^\ell}\cJ(\overline{s}\,\overline{u}).$$
		\item[$(ii)$] $\cU$ is a nonempty open subset of $\cT$, and the map $\cU\to(0,\infty)^\ell$ given by $\overline{u}\mapsto\overline{s}_{\overline{u}}$ is continuous.
		\item[$(iii)$] The map $\rho:\cU\to \cN$ given by $\overline{u}\mapsto\overline{s}_{\overline{u}}\overline{u}$ is a homeomorphism.
		\item[$(iv)$] If $(\overline{u}_n)$ is a sequence in $\cU$ and $\overline{u}_n\to\overline{u}\in\partial\cU$, then $|\overline{s}_{\overline{u}_n}|\to\infty$.
	\end{itemize}
\end{lemma}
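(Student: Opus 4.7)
The plan is to reduce all four assertions to a study of the fibering function $\psi_{\overline{u}}:(0,\infty)^\ell \to \mathbb{R}$ defined by $\psi_{\overline{u}}(\overline{s}) := \mathcal{J}(\overline{s}\,\overline{u})$. Since $\partial_{s_i}\psi_{\overline{u}}(\overline{s}) = \partial_i \mathcal{J}(\overline{s}\,\overline{u})u_i$, the condition $\overline{s}\,\overline{u} \in \mathcal{N}$ with all $s_i > 0$ is exactly $\nabla\psi_{\overline{u}}(\overline{s}) = 0$. Expanding, one finds that $\psi_{\overline{u}}$ equals a sum of pure quadratic terms $\tfrac{1}{2}s_i^2\|u_i\|_{P_g}^2$ minus terms of total degree $2_m^\ast$ with strictly positive coefficients (because $\nu_i>0$ and $-\eta_{ij}>0$), so $\psi_{\overline{u}}$ is positive on a punctured neighborhood of the origin, tends to $-\infty$ along every ray in $(0,\infty)^\ell$, and therefore attains its supremum at an interior critical point whenever this supremum is positive.

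For (i), the uniqueness of the critical point will follow by showing that the Hessian of $\psi_{\overline{u}}$ at any interior critical point is strictly negative-definite. This is the delicate step and relies on the hypotheses $\eta_{ij}<0$, $\alpha_{ij}+\beta_{ij}=2_m^\ast$ and $\alpha_{ij}=\beta_{ji}$: substituting the critical equations causes the diagonal to pick up a strictly negative contribution of order $-(2_m^\ast-2)\nu_i s_i^{2_m^\ast-2}\|u_i\|_{2_m^\ast}^{2_m^\ast}$, and the symmetric structure of the coupling terms ensures that the full quadratic form remains strictly negative. Uniqueness then rules out any other critical point in $(0,\infty)^\ell$ and identifies the unique one as the global maximum.

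For (ii) and (iii), the same strict negative-definiteness powers the implicit function theorem applied to the smooth map $(\overline{u}, \overline{s}) \mapsto (\partial_{s_i}\psi_{\overline{u}}(\overline{s}))_i$ on $\mathcal{T}\times(0,\infty)^\ell$, giving that $\mathcal{U}$ is open and that $\overline{u}\mapsto\overline{s}_{\overline{u}}$ is continuous. The inverse of $\rho$ is then the componentwise normalization $\overline{v}\mapsto(v_i/\|v_i\|_{P_g})_i$, whose continuity on $\mathcal{N}$ is supplied by the uniform lower bound $\|v_i\|_{P_g}\geq d_0$ from Lemma~\ref{lem:away_froM_{d/2}}. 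To see that $\mathcal{U}$ is nonempty, it suffices to take any $\ell$-tuple of $\Gamma$-invariant functions with mutually disjoint supports: the coupling integrals vanish and each component reduces to a single-equation Nehari problem, solvable by Proposition~\ref{Proposition:ExistenceLeastEnergySolution}.

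Finally, (iv) will be handled by contradiction: if $\overline{s}_{\overline{u}_n}$ stayed bounded along a subsequence with $\overline{u}_n\to\overline{u}\in\partial\mathcal{U}$, one could extract a limit $\overline{s}_0 \in [0,\infty)^\ell$; Lemma~\ref{lem:away_froM_{d/2}} precludes any vanishing component of $\overline{s}_0$, so passing to the limit in the Nehari identities would force $\overline{s}_0\,\overline{u}\in\mathcal{N}$, placing $\overline{u}\in\mathcal{U}$ and contradicting $\overline{u}\in\partial\mathcal{U}$. The main obstacle throughout will be verifying the strict negative-definiteness of the Hessian of $\psi_{\overline{u}}$, since this single algebraic fact simultaneously yields uniqueness for (i) and invertibility for the implicit function theorem needed in (ii).
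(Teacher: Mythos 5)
The overall strategy here (fibering map $\psi_{\overline{u}}$, negative definiteness of the Hessian at critical points, implicit function theorem) is indeed the approach in \cite[Proposition 3.1]{ClappSzulkin19}, to which the paper delegates the proof. However, your proposal contains a sign error that breaks the argument at the outset.

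You write that $\psi_{\overline{u}}$ equals the quadratic terms ``\emph{minus} terms of total degree $2_m^\ast$ with strictly positive coefficients (because $\nu_i>0$ and $-\eta_{ij}>0$).'' That is not so. The coupling term in $\mathcal{J}$ is $-\frac{1}{2}\sum_{i\neq j}\eta_{ij}\int_M|u_j|^{\alpha_{ij}}|u_i|^{\beta_{ij}}$, and since $\eta_{ij}<0$ this contributes $+\frac{1}{2}\sum_{i\neq j}|\eta_{ij}|\int_M|u_j|^{\alpha_{ij}}|u_i|^{\beta_{ij}}\geq 0$, i.e.\ the degree-$2_m^\ast$ coupling terms enter $\psi_{\overline{u}}$ with a \emph{plus} sign. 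Thus the degree-$2_m^\ast$ part of $\psi_{\overline{u}}$ has mixed signs, and your claim that $\psi_{\overline{u}}(\overline{s})\to-\infty$ along every ray in $(0,\infty)^\ell$ is false: along a ray $t\overline{\sigma}$ the coefficient of $t^{2_m^\ast}$ is $-\frac{1}{2_m^\ast}\sum_i\sigma_i^{2_m^\ast}\nu_i\|u_i\|_{2_m^\ast}^{2_m^\ast}+\frac{1}{2}\sum_{i\neq j}|\eta_{ij}|\sigma_j^{\alpha_{ij}}\sigma_i^{\beta_{ij}}c_{ij}$, which can be positive. This is not a cosmetic slip: it is precisely the competition between these two contributions that makes $\cU$ a proper subset of $\cT$, and also why the ``global max'' assertion in (i) needs an argument rather than being automatic.

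Second, the ``delicate step'' you identify — strict negative definiteness of the Hessian at a critical point — is asserted but not carried out, and it really is the heart of the proof. It does hold, but not by inspection: after substituting the critical equations one must combine the diagonal contribution $-(2_m^\ast-2)s_k^{2_m^\ast-2}\nu_k\|u_k\|_{2_m^\ast}^{2_m^\ast}$ with the indefinite quadratic forms coming from the coupling terms (whose off-diagonal entries are all \emph{positive}), and the cancellation hinges on the algebraic identity $\beta(2_m^\ast-\beta)x^2+\alpha(2_m^\ast-\alpha)y^2-2\alpha\beta xy=\alpha\beta(x-y)^2$ for $\alpha+\beta=2_m^\ast$, together with the strict inequality $\nu_k s_k^{2_m^\ast-2}\|u_k\|_{2_m^\ast}^{2_m^\ast}>\sum_{j\neq k}|\eta_{kj}|\beta_{kj}c_{kj}s_j^{\alpha_{kj}}s_k^{\beta_{kj}-2}$ forced by $\|u_k\|_{P_g}^2>0$. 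Without this computation, neither the uniqueness claim in (i) nor the implicit-function-theorem step in (ii) is justified. Moreover, even granted the Hessian computation, passing from ``every critical point is a strict local maximum'' to ``the critical point is unique and a global maximum'' requires an additional argument (Clapp--Szulkin handle this; your sketch silently conflates local with global maxima, which is especially problematic once the $\to-\infty$ claim is dropped). Items (iii) and (iv) and the nonemptiness of $\cU$ are handled correctly.
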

\medskip

Define $\Psi:\cU\to\r$ as 
\begin{equation*}
	\Psi(\overline{u}): = \cJ(\overline{s}_{\overline{u}}\overline{u}).
\end{equation*}
According to Lemma \ref{lem:U}, $\cU$ is a Hilbert manifold, for it is an open subset of the smooth Hilbert submanifold $\cT$ of $\cH$. When $\Psi$ is differentiable at $\overline{u}$, we write $\|\Psi'(\overline{u})\|_*$ for the norm of $\Psi'(\overline{u})$ in the cotangent space $\mathrm{T}_{\overline{u}}^*(\cT)$ to $\cT$ at $\overline{u}$, i.e.,
$$\|\Psi'(\overline{u})\|_*:=\sup\limits_{\substack{\overline{v}\in\mathrm{T}_{\overline{u}}(\cU) \\\overline{v}\neq 0}}\frac{|\Psi'(\overline{u})\overline{v}|}{\|\overline{v}\|},$$
where $\mathrm{T}_{\overline{u}}(\cU)$ is the tangent space to $\cU$ at $\overline{u}$.

Recall that a sequence $(\overline{u}_n)$ in $\cU$ is called a $(PS)_c$\emph{-sequence for} $\Psi$ if $\Psi(\overline{u}_n)\to c$ and $\|\Psi'(\overline{u}_n)\|_*\to 0$, and $\Psi$ is said to satisfy the $(PS)_c$\emph{-condition} if every such sequence has a convergent subsequence. Similarly, a $(PS)_c$\emph{-sequence for} $\cJ$ is a sequence $(\overline{u}_n)$ in $\cH$ such that $\cJ(\overline{u}_n)\to 0$ and $\|\cJ'(\overline{u}_n)\|_{\cH'}\to 0$, and $\cJ$ satisfies the $(PS)_c$\emph{-condition} if any such sequence has a convergent subsequence.   Here $\cH'$ denotes the dual space of $\cH$.

\begin{lemma}\label{Lemma:Psi}
	\begin{itemize}
		\item[$(i)$] $\Psi\in\cC^1(\cU)$ and its derivative is given by
		\begin{equation*}
			\Psi'(\overline{u})\overline{v} = \cJ'(\overline{s}_{\overline{u}}\overline{u})[\overline{s}_{\overline{u}}\overline{v}] \quad \text{for all } \overline{u}\in\cU \text{ and }\overline{v}\in \mathrm{T}_{\overline{u}}(\cU).
		\end{equation*}
		Moreover, there exists $d_0>0$ such that
		$$d_0\min_i\{s_{u,i}\}  \|\cJ'(\overline{s}_{\overline{u}}\overline{u})\|_{\cH'}\leq\|\Psi'(\overline{u})\|_*\leq \max_i\{s_{u,i}\} \|\cJ'(\overline{s}_{\overline{u}}\overline{u})\|_{\cH'}\quad \text{for all } \overline{u}\in\cU.$$
		\item[$(ii)$] If $(\overline{u}_n)$ is a $(PS)_c$-sequence for $\Psi$ in $\mathcal{U}$, then $(\overline{s}_{\overline{u}_n}\overline{u}_n)$ is a $(PS)_c$-sequence for $\cJ$ in $\mathcal{H}$.
		\item[$(iii)$] $\overline{u}$ is a critical point of $\Psi$ if and only if $\overline{s}_{\overline{u}}\overline{u}$ is a fully nontrivial critical point of $\cJ$.
		\item[$(iv)$] If $(\overline{u}_n)$ is a sequence in $\cU$ and $\overline{u}_n\to\overline{u}\in\partial\cU$, then $|\Psi(\overline{u}_n)|\to\infty$.
		\item[$(v)$]$\overline{u}\in\cU$ if and only if $-\overline{u}\in\cU$, and $\Psi(\overline{u})=\Psi(-\overline{u})$.
	\end{itemize}
\end{lemma}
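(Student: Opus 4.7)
The plan is to follow the Nehari-manifold argument of Clapp--Szulkin \cite{ClappSzulkin19}, adapted to the symmetric setting as in \cite{ClappFernandezSaldana2021}. Most of the statement transfers almost verbatim once one uses the energy identity $\cJ(\overline{u})=\frac{m}{N}\|\overline{u}\|^2$ on $\cN$ and the quantitative control from Lemmas~\ref{lem:away_froM_{d/2}} and \ref{lem:U}.

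For part $(i)$, I would first upgrade the homeomorphism $\rho:\cU\to\cN$ to a $C^1$-diffeomorphism by applying the implicit function theorem to the smooth map $F:\cT\times(0,\infty)^\ell\to\R^\ell$ with components $F_i(\overline{u},\overline{s})=\partial_i\cJ(\overline{s}\,\overline{u})[s_iu_i]$. At $(\overline{u},\overline{s}_{\overline{u}})$ the Jacobian $\partial_{\overline{s}} F$ is diagonally dominant with strictly negative diagonal entries (this is the standard Nehari-type strict-concavity computation, using $2_m^\ast>2$ and $\alpha_{ij}+\beta_{ij}=2_m^\ast$), hence invertible. Consequently $\Psi=\cJ\circ\rho$ is $C^1$. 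To derive the formula, for $\overline{v}\in\mathrm{T}_{\overline{u}}(\cU)$ I would differentiate $\cJ(\overline{s}_{\overline{u}}\overline{u})$ and observe that every term containing $D\overline{s}_{\overline{u}}[\overline{v}]$ is multiplied by $\partial_i\cJ(\overline{s}_{\overline{u}}\overline{u})[s_{u,i}u_i]$, which vanishes since $\overline{s}_{\overline{u}}\overline{u}\in\cN$. Only $\cJ'(\overline{s}_{\overline{u}}\overline{u})[\overline{s}_{\overline{u}}\overline{v}]$ survives. The norm bounds come from writing an arbitrary $\overline{w}\in\cH$ as $\overline{w}=\overline{s}_{\overline{u}}\overline{v}+\sum_i t_i s_{u,i}u_i$ with $\overline{v}\in\mathrm{T}_{\overline{u}}(\cT)$: the $t_i$-components are annihilated by $\cJ'(\overline{s}_{\overline{u}}\overline{u})$ (Nehari), while $\|\overline{s}_{\overline{u}}\overline{v}\|\leq\max_i s_{u,i}\|\overline{v}\|$ gives the upper bound and $\|\overline{v}\|\leq\frac{1}{\min_i s_{u,i}}\|\overline{s}_{\overline{u}}\overline{v}\|$ gives the lower bound (after absorbing the projection norms into a constant $d_0$).

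For part $(ii)$, if $(\overline{u}_n)\subset\cU$ is a $(PS)_c$-sequence for $\Psi$, then by \eqref{eq:energy_nehari} the sequence $s_{u_n,i}^2=\|s_{u_n,i}u_{n,i}\|^2$ is bounded above by $\frac{Nc}{m}+o(1)$, and by Lemma~\ref{lem:away_froM_{d/2}} applied to $\overline{s}_{\overline{u}_n}\overline{u}_n\in\cN$ it is bounded below by $d_0$. The estimate in $(i)$ then gives $\|\cJ'(\overline{s}_{\overline{u}_n}\overline{u}_n)\|_{\cH'}\to 0$, so $(\overline{s}_{\overline{u}_n}\overline{u}_n)$ is a $(PS)_c$-sequence for $\cJ$. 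Part $(iii)$ is immediate from the derivative formula: the direction $\Leftarrow$ is obvious, and $\Rightarrow$ uses that the map $\overline{v}\mapsto\overline{s}_{\overline{u}}\overline{v}$ has image containing a complement of $\mathrm{span}\{s_{u,i}u_i\}_i$, on which $\cJ'(\overline{s}_{\overline{u}}\overline{u})$ already vanishes by Nehari, so $\cJ'(\overline{s}_{\overline{u}}\overline{u})=0$ on all of $\cH$. Part $(iv)$ is the identity $\Psi(\overline{u}_n)=\frac{m}{N}\sum_i s_{u_n,i}^2$ combined with Lemma~\ref{lem:U}$(iv)$. Part $(v)$ follows from the $\Z/2$-symmetry of $\cJ$: each term in $\cJ(\overline{u})$ is even in $\overline{u}$, hence $\overline{s}_{-\overline{u}}=\overline{s}_{\overline{u}}$ by uniqueness in Lemma~\ref{lem:U}$(i)$, and $\Psi(-\overline{u})=\cJ(-\overline{s}_{\overline{u}}\overline{u})=\cJ(\overline{s}_{\overline{u}}\overline{u})=\Psi(\overline{u})$.

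The delicate step is the double-sided norm comparison in $(i)$: the upper bound is routine, but the lower bound is where the coefficient $\min_i s_{u,i}$ appears and where one needs the Nehari condition to kill the ``radial'' (scaling) directions. This is also precisely what makes $(ii)$ work, since it is essential that $s_{u_n,i}$ stays bounded below along a $(PS)_c$-sequence; without that, the conclusion of $(ii)$ would fail. Everything else is a direct transcription of the arguments in \cite[Proposition~3.1, Lemma~3.2]{ClappSzulkin19} to the present $\Gamma$-invariant, higher-order setting, noting that Lemma~\ref{Lemma:Pg symmetry invariance} ensures $\cJ$ is well-defined and $\Gamma$-invariant on $\cH$ and that the compact Sobolev embedding of Lemma~\ref{Lemma:Sobolev} is not needed for this lemma but only for the subsequent compactness arguments.
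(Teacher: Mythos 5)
Your proof is correct, and it reproduces essentially the same argument the paper invokes: the paper omits the proof of Lemma~\ref{Lemma:Psi}, stating only that ``the argument is exactly the same as in \cite[Theorem 3.3]{ClappSzulkin19},'' and your proposal is a faithful and careful reconstruction of that Clapp--Szulkin argument in the present $\Gamma$-invariant, higher-order setting. In particular, you correctly identify the two points where the argument is not purely formal: upgrading the continuity of $\overline{u}\mapsto\overline{s}_{\overline{u}}$ from Lemma~\ref{lem:U}$(ii)$ to $C^1$ via the implicit function theorem (using the Nehari strict concavity so that $\partial_{\overline{s}}F$ is invertible at the maximizer), and the two-sided norm comparison in part~$(i)$, which requires the Nehari condition to annihilate the radial directions $\{s_{u,i}u_i\}$ and is precisely what, combined with the uniform lower bound $s_{u_n,i}\geq d_0$ from Lemma~\ref{lem:away_froM_{d/2}}, makes part~$(ii)$ go through.
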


We omit the proof of this lemma, because the argument is exactly the same as in \cite[Theorem 3.3]{ClappSzulkin19}.

\begin{lemma}
For every $c\in\r$,	$\Psi$ satisfies the $(PS)_c$-condition.
\end{lemma}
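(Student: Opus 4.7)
The plan is to reduce the Palais--Smale condition for $\Psi$ on $\mathcal{U}$ to a Palais--Smale condition for $\mathcal{J}$ on $\mathcal{N}$, and then to exploit the compactness of the \emph{symmetric} Sobolev embedding at the critical exponent to pass to the limit strongly. Let $(\overline{u}_n)\subset\mathcal{U}$ be a $(PS)_c$-sequence for $\Psi$. By Lemma \ref{Lemma:Psi}(ii), $\overline{v}_n:=\overline{s}_{\overline{u}_n}\overline{u}_n$ lies in $\mathcal{N}$ and satisfies $\mathcal{J}(\overline{v}_n)\to c$ and $\|\mathcal{J}'(\overline{v}_n)\|_{\mathcal{H}'}\to 0$.

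First I would establish boundedness: the Nehari-type identity \eqref{eq:energy_nehari} gives $\|\overline{v}_n\|^2=\frac{N}{m}\mathcal{J}(\overline{v}_n)\to \frac{Nc}{m}$, so $(\overline{v}_n)$ is bounded in $\mathcal{H}$ and, up to a subsequence, $\overline{v}_n\rightharpoonup\overline{w}$ weakly in $\mathcal{H}$. The crucial observation is that hypothesis (\hyperref[Gamma:DimensionOrbits]{$\Gamma 2$}) forces $\kappa\geq 1$, whence $2_m^\ast<2_{m,\Gamma}^\ast$ in Lemma \ref{Lemma:Sobolev}, so the embedding $H_g^m(M)^\Gamma\hookrightarrow L_g^{2_m^\ast}(M)$ is \emph{compact}; hence $v_{n,i}\to w_i$ strongly in $L_g^{2_m^\ast}(M)$ for every $i$.

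Next I would test $\mathcal{J}'(\overline{v}_n)\to 0$ against $\overline{v}_n-\overline{w}$. Continuity of the Nemytskii operator $u\mapsto|u|^{2_m^\ast-2}u$ from $L^{2_m^\ast}$ into its dual, together with H\"older applied to the coupling terms (using $\alpha_{ij}+\beta_{ij}=2_m^\ast$), shows that every nonlinear contribution tends to zero thanks to the strong $L^{2_m^\ast}$-convergence; the weak convergence $\overline{v}_n\rightharpoonup\overline{w}$ makes the cross term $\sum_i\langle w_i,v_{n,i}-w_i\rangle_{P_g}$ vanish as well. Combining both gives $\sum_i\|v_{n,i}-w_i\|_{P_g}^2\to 0$, so $\overline{v}_n\to\overline{w}$ strongly in $\mathcal{H}$. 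By Lemma \ref{lem:away_froM_{d/2}}, $\|w_i\|\geq d_0>0$ for each $i$, and passing to the limit in $\mathcal{J}'(\overline{v}_n)\to 0$ yields $\mathcal{J}'(\overline{w})=0$; in particular $\overline{w}\in\mathcal{N}$.

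Finally I would transfer the convergence back to $\mathcal{U}$. Since $\overline{u}_n\in\mathcal{T}$ gives $\|u_{n,i}\|=1$, we have $s_{u_n,i}=\|v_{n,i}\|\to\|w_i\|>0$, so $\overline{u}_n=\overline{v}_n/\overline{s}_{\overline{u}_n}$ converges strongly in $\mathcal{T}$ to some $\overline{u}$, and $\overline{u}\in\mathcal{U}$ because $(\|w_1\|,\ldots,\|w_\ell\|)\,\overline{u}=\overline{w}\in\mathcal{N}$. The main obstacle is of course the critical Sobolev exponent $2_m^\ast$, which in the unsymmetric setting would obstruct strong convergence via a concentration-compactness loss; it is defused here precisely by the symmetry-enhanced compactness of Lemma \ref{Lemma:Sobolev} under (\hyperref[Gamma:DimensionOrbits]{$\Gamma 2$}), which is exactly why the $(PS)_c$ condition holds at \emph{every} level $c$.
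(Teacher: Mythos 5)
Your proof is correct and follows the same strategy as the paper: pass from the $(PS)_c$-sequence for $\Psi$ on $\cU$ to the $(PS)_c$-sequence $\overline{v}_n=\rho(\overline{u}_n)$ for $\cJ$ on $\cN$ via Lemma \ref{Lemma:Psi}, establish boundedness, extract a strongly convergent subsequence using the compactness of $H^m_g(M)^\Gamma\hookrightarrow L^{2_m^\ast}_g(M)$ guaranteed by (\hyperref[Gamma:DimensionOrbits]{$\Gamma2$}), invoke closedness of $\cN$ (Lemma \ref{lem:away_froM_{d/2}}), and transfer back through the homeomorphism $\rho$. The only difference is cosmetic: you spell out the strong-convergence step (weak limit, Nemytskii/H\"older estimates on the nonlinear and coupling terms, disappearance of the cross term) that the paper delegates to a citation, and you use the Nehari identity \eqref{eq:energy_nehari} rather than the generic $\cJ-\tfrac{1}{2_m^\ast}\cJ'$ estimate for boundedness, which is fine since $\overline{v}_n\in\cN$.
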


\begin{proof}
	First observe that a $(PS)_c$-sequence $(\overline{v}_n)$ for $\mathcal{J}$ is bounded. Indeed, there exists $C>0$ such that
	\[
	\frac{m}{N}\Vert \overline{v}_n \Vert^2 = \mathcal{J}(\overline{v}_n) - \frac{1}{2_m^\ast} \mathcal{J}'(\overline{v}_n)\overline{v}_n \leq C(1 + \Vert \overline{v}_n\Vert),
	\]
	and the claim follows.
	
	Using this, let $(\overline{u}_n)\subset\mathcal{U}$ be a $(PS)_c$-sequence for $\Psi$. By Lemma \ref{Lemma:Psi}, the sequence $\overline{v}_n:=\rho(\overline{u})\in\mathcal{N}$ is a $(PS)_c$-sequence for $\mathcal{J}$ and it is bounded by the above claim. A standard argument using Lemma \ref{Lemma:Sobolev} as in \cite[Proposition 3.6]{ClappPistoia2018}, shows that $(\overline{v}_n)$ contains a convergent subsequence, converging to some  $\overline{v}\in\mathcal{H}$. As $\overline{v}_n\in\mathcal{N}$ for every $n\in\mathbb{N}$ and as $\mathcal{N}$ is closed by Lemma \ref{lem:away_froM_{d/2}}, it follows that $\overline{v}\in\mathcal{N}$. Finally, since $\rho$ is a homeomorphism between $\mathcal{N}$ and $\mathcal{U}$, this yields that $\overline{u}_n$ converges to $\rho^{-1}(\overline{v})$ in a subsequence, and $\Psi$ satisfies the $(PS)_c$-condition
\end{proof}

Given a nonempty subset $\mathcal{Z}$ of $\mathcal{T}$ such that $\overline{u}\in\mathcal{Z}$ if and only if $-\overline{u}\in\mathcal{Z}$, the \emph{genus of $\mathcal{Z}$}, denoted by $\mathrm{genus}(\mathcal{Z})$, is the smallest integer $k\geq 1$ such that there exists an odd continuous function $\mathcal{Z}\rightarrow\mathbb{S}^{k-1}$ into the unit sphere $\mathbb{S}^{k-1}$ in $\R^k$. If no such $k$ exists, we define $\mathrm{genus}(\mathcal{Z})=\infty$; finally, we set $\mathrm{genus}(\emptyset)=0$.

\begin{lemma}
	$\mathrm{genus}(\cU)=\infty$.
\end{lemma}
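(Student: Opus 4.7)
The plan is to show that for every $k \in \mathbb{N}$ there is an odd continuous map $\S^{k-1} \to \cU$; by the standard monotonicity property of the Krasnoselskii genus this forces $\mathrm{genus}(\cU) \geq \mathrm{genus}(\S^{k-1}) = k$, and letting $k \to \infty$ yields the conclusion.

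To construct such a map, I would first exploit the infinite dimensionality of $H_g^m(M)^\Gamma$ (already noted in Section~\ref{Sec:Variational Setting} as a consequence of Palais' $\Gamma$-invariant partitions of unity together with hypothesis $(\Gamma 2)$) in order to produce, for each fixed $k$, a collection of $k\ell$ pairwise disjoint nonempty $\Gamma$-invariant open subsets $W_{j,i} \subset M$ ($1 \le j \le k$, $1 \le i \le \ell$), together with nonzero $\Gamma$-invariant bumps $v_{j,i} \in \mathcal{C}_c^{\infty}(W_{j,i})^\Gamma$. Concretely, since $(\Gamma 2)$ guarantees that the orbit space $M/\Gamma$ has positive dimension, one can separate $k\ell$ disjoint coordinate balls on $M/\Gamma$ and pull them back. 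Then, for $\sigma = (\sigma_1,\ldots,\sigma_k) \in \S^{k-1}$ and each $i \in \{1,\ldots,\ell\}$, I would set $u_i(\sigma) := \sum_{j=1}^k \sigma_j v_{j,i}$, $\widetilde{u}_i(\sigma) := u_i(\sigma)/\|u_i(\sigma)\|_{P_g}$, and $\widetilde{\overline u}(\sigma) := (\widetilde{u}_1(\sigma),\ldots,\widetilde{u}_\ell(\sigma)) \in \cT$.

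The key observation is that, for $i$ fixed, the $v_{j,i}$ have pairwise disjoint supports, so $u_i(\sigma) \neq 0$ whenever $\sigma \neq 0$; moreover, $\mathrm{supp}(u_i(\sigma)) \cap \mathrm{supp}(u_{i'}(\sigma)) = \emptyset$ for $i \neq i'$. This disjointness annihilates every cross term $\int_M |u_j(\sigma)|^{\alpha_{ij}}|u_i(\sigma)|^{\beta_{ij}}$, so the Nehari system determining $\overline s_{\widetilde{\overline u}(\sigma)}$ decouples into the $\ell$ scalar equations $s_i^{2_m^\ast-2}\nu_i\int_M|\widetilde{u}_i(\sigma)|^{2_m^\ast}=1$, each of which has a unique solution in $(0,\infty)$. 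Hence $\widetilde{\overline u}(\sigma) \in \cU$, and the resulting assignment $\sigma \mapsto \widetilde{\overline u}(\sigma)$ is manifestly continuous and odd. I expect the only delicate step of this plan to be producing the $k\ell$ disjoint $\Gamma$-invariant open sets with $\Gamma$-invariant bumps — this is precisely where hypothesis $(\Gamma 2)$ is indispensable, since otherwise $M/\Gamma$ could collapse to a point; once that is in hand, the rest is a routine Nehari-decoupling computation.
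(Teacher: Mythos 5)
Your proof is correct and is essentially the paper's approach: the paper invokes condition $(\Gamma 2)$ and $\Gamma$-invariant partitions of unity to produce arbitrarily many $\Gamma$-invariant bumps with mutually disjoint supports, and then cites \cite[Lemma~4.5]{ClappSzulkin19} for the remaining argument, which is precisely the odd map $\mathbb{S}^{k-1}\to\cU$ via decoupled Nehari equations that you write out. The only cosmetic caveat is that the genus inequality you use — an odd continuous map $\mathbb{S}^{k-1}\to Z$ forces $\mathrm{genus}(Z)\geq k$ — is the Borsuk--Ulam mapping property of the genus rather than monotonicity under inclusion, but the conclusion is the same and the decoupling computation $s_i^{2_m^\ast-2}\nu_i\int_M|\widetilde u_i|^{2_m^\ast}=1$ is exactly right.
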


\begin{proof}
	Condition (\hyperref[Gamma:DimensionOrbits]{$\Gamma2$}) together with the existence of $\Gamma$-invariant partitions of the unity (see \cite{Palais1961}), one obtains an arbitrarily large number of positive $\Gamma$-invariant functions in $\cC^\infty(M)$ with mutually disjoint supports. Then, arguing as in \cite[ Lemma 4.5]{ClappSzulkin19}, one shows that $\mathrm{genus}(\cU)=\infty$.
\end{proof}
\smallskip

\begin{proof}[Proof of Theorem \ref{Th:MainQSystems}]
	Lemma \ref{Lemma:Psi} $(iv)$ implies that $\cU$ is positively invariant under the negative pseudogradient flow of $\Psi$, so the usual deformation lemma holds true for $\Psi$, see e.g. \cite[Section II.3]{StruweBook} or \cite[Section 5.3]{WillemBook}. As $\Psi$ satisfies the $(PS)_c$-condition for every $c\in\r$, standard variational arguments show that $\Psi$ attains its minimum on $\cU$ at some $\overline{u}$. By Lemma \ref{Lemma:Psi}$(iii)$ and the Principle of Symmetric Criticality, $\overline{s}_{\overline{u}}\overline{u}$ is a $\Gamma$-invariant least energy fully nontrivial solution for the system \eqref{Eq:Q Systems}. Moreover, as $\Psi$ is even and $\mathrm{genus}(\cU)=\infty$, arguing as in the proof of Theorem 3.4 (c) in \cite{ClappSzulkin19}, it follows that  $\Psi$ has an unbounded sequence of critical points. Using Lemma \ref{Lemma:Psi} (iii), and the fact that $\Psi(\overline{u})=\cJ(\overline{s}_{\overline{u}}\overline{u})=\frac{m}{N}\|\overline{s}_{\overline{u}}\overline{u}\|^2$ by \eqref{eq:energy_nehari}, the system \eqref{Eq:Q Systems} has an unbounded sequence of fully nontrivial $\Gamma$-invariant solutions.
\end{proof}
\medskip

We next apply Theorem \ref{Th:MainQSystems} to the case $\ell=1$ and a recent result by J. Vétois to prove the multiplicity result stated in Corollary \ref{Corollary:NodalSolutionsPaneitz}.

\begin{proof}[Proof of Corollary \ref{Corollary:NodalSolutionsPaneitz}]
 Theorem 2.2 in \cite{Vetois2022} states that the positive solutions to the problem \eqref{Equation:Paneitz-Branson} must be constant and by the concrete expression of the Paneitz-Branson operator and the $Q$-curvature on Einstein manifolds (see, for instance, \cite{DjadliHebeyLedoux2000} and \cite{Gover06} respectively) it is unique. As $(M,g)$ is Einstein with positive scalar curvature, the operator $P_g$ is coercive \cite[Proposition 4]{Ro} and Theorem \ref{Th:MainQSystems} for $\ell=1$, yields the existence of an unbounded sequence of $\Gamma$-invariant solutions, and the corollary follows.  
\end{proof}

\section{One dimensional reduction}\label{Section:OneDimensionalReduction}

In this section, we will strongly use that the group $\Gamma$ satisfies properties (\hyperref[Gamma:Cohomogeneity]{$\Gamma1$}) and (\hyperref[Gamma:MetricDecomposition]{$\Gamma3$}). Recall that $M_-$ and $M_+$ denote the singular orbits, as it was given in the introduction, and let $n_1=\dim M_{-}$ and $n_2=\dim M_{+}$, $N-n_{i}\geq 2$. Since $M$ is compact, the geodesic distance between $M_-$ and $M_+$,
\[
d:=\text{dist}_g(M_-,M_+),
\]
is attained and the distance function $r:M\rightarrow[0,d]$ given by
\[
r(x):=\text{dist}_g(M_-,x),
\]
is well defined. This function is a Riemannian submersion and satisfies for any $x\in M\setminus(M_+\cup M_-)$ that
\[
\vert \nabla r(x)\vert_g^2 = 1 \quad \text{and} \quad \Delta_g r(x) = h(r(x)),
\]
where $h(t)$ denotes the mean curvature of $r^{-1}(t)$. See \cite[Chapter 2, Section 4.1]{Petersen06}, and also \cite[Section 2]{BBP21}. For the mean curvature, we explicitly have:
\[
h(t):=\frac{\text{codim}(M_-)-1}{t} + t(\text{trace}(A)) + \text{trace}(B) + o(t^2),
\]
for some matrices $A$ and $B$ not depending on $t$, and it further satisfies that
\[
\lim_{t\rightarrow 0} t\cdot h(t) = N - n_1-1 \quad\text{and}\quad \lim_{t\rightarrow d}(t-d)h(t) = N-n_2 - 1
\]
(see \cite{GeTang2014}, and also \cite{BBP21}). Therefore, for any $w\in \mathcal{C}^\infty([0,d])$ the following identity holds true:
\begin{equation}\label{Equation:IsoparametricLaplacian}
    \Delta_g(w\circ r) =
    \begin{cases}
    (N-n_1) w''(0), & \text{in } M_-,\\
    (w'' + hw')\circ r, & \text{in } M\setminus(M_-\cup M_+),\\
    (N-n_2)w''(d), & \text{in } M_+.
    \end{cases}
\end{equation}

Moreover, notice that $M_t:=r^{-1}(t)$ is a principal orbit for any $t\in(0,d)$, while $M_-=r^{-1}(0)$ and $M_+=r^{-1}(d)$. Hence, for every $x,y\in M$, it follows that
\[
r(x)=r(y)\Longleftrightarrow x,y \in M_t \text{ for some }t\in[0,d]\Longleftrightarrow \Gamma x = \Gamma y.
\]
Thus, for any $w\in \mathcal{C}^\infty([0,d])$, the function $w\circ r\in \mathcal{C}^\infty(M)^\Gamma$ and, conversely, for any $u\in \mathcal{C}^\infty(M)^\Gamma$ there exists a unique $w\in \mathcal{C}^\infty([0,d])$ such that $u=w\circ r$. In this way, we have a linear isomorphism 
\begin{equation}\label{Eq:IsomorphismSmoothFunctionSpace}
\iota: C^\infty(M)^\Gamma \rightarrow C^\infty([0,d]),\qquad u=w\circ r \mapsto w.
\end{equation}

As in the introduction, $K$ will denote the principal isotropy, i.e., the stabilizer of the $\Gamma$-action at any point $p_0\in r^{-1}(t_0)$, for some $t_0\in(0, d)$. Such a group is the same at the preimage of any interior point of $(0, d)$ under $r$. All the regular orbits are diffeomorphic to $\Gamma/K$, and we will fix one, say $M_{d/2}:=r^{-1}(d/2)$.

\begin{lemma}\label{Lemma:MetricVolumeDecomposition}
Assume \emph{(\hyperref[Gamma:Cohomogeneity]{$\Gamma1$})} and \emph{(\hyperref[Gamma:MetricDecomposition]{$\Gamma3$})} hold true. Then there exists a metric $g_\ast$ on $M_{d/2}$, a diffeomorphism $ \varphi:(0,d)\times M_{d/2}\rightarrow M\setminus(M_-\cup M_+)$ and a smooth function $\phi:[0,d]\rightarrow\mathbb{R}$ such that

\begin{enumerate}
        \item for every $(x,t)\in M_{d/2}\times(0,d)$, $r\circ\varphi(x,t) = t$.
    \item $dV_g = \phi(t) dt\wedge dV_{g_\ast}$. 
\end{enumerate}
\end{lemma}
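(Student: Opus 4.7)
The proof plan is to construct $\varphi$ via the gradient flow of $r$ on the regular part $M\setminus(M_-\cup M_+)$ and to read off the volume decomposition directly from the ansatz $(\Gamma3)$.

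First I would note that on $M\setminus(M_-\cup M_+)$ the vector field $\nabla r$ is smooth of unit length (as $|\nabla r|_g=1$), and its integral curves are unit-speed geodesics because $\nabla_{\nabla r}\nabla r=\tfrac12\nabla|\nabla r|_g^2=0$. Let $\psi_s$ denote its flow. Since $\frac{d}{ds}r(\psi_s(x))=g(\nabla r,\nabla r)=1$, one has $r(\psi_s(x))=r(x)+s$, so $\psi_s$ sends $M_\tau$ diffeomorphically onto $M_{\tau+s}$ whenever defined. Because $M_-$ is $\Gamma$-invariant, the function $r$ is $\Gamma$-invariant, hence so is $\nabla r$, and $\psi_s$ commutes with the $\Gamma$-action. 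Setting
\[
\varphi:(0,d)\times M_{d/2}\longrightarrow M\setminus(M_-\cup M_+),\qquad \varphi(t,x):=\psi_{t-d/2}(x),
\]
gives $r\circ\varphi(t,x)=t$, which is (1); and $\varphi$ is a diffeomorphism with inverse $y\mapsto(r(y),\psi_{d/2-r(y)}(y))$.

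Next I would define $g_*:=\iota^*g$, the metric on $M_{d/2}$ induced by the inclusion $\iota:M_{d/2}\hookrightarrow M$. The $\Gamma$-equivariant identifications $\psi_{t-d/2}:M_{d/2}\to M_t$ allow the tensors $g_j$ appearing in $(\Gamma3)$ to be regarded as fixed symmetric $2$-tensors on $M_{d/2}$, so that
\[
\varphi^*g=dt^2+g_t,\qquad g_t:=\sum_{j=1}^{k}f_j^2(t)\,g_j,\qquad g_*=g_{d/2}.
\]
Then in any local chart on $M_{d/2}$ with coordinate volume element $\omega_0$,
\[
\varphi^*dV_g=\sqrt{\det g_t}\,dt\wedge\omega_0=\phi(t)\,dt\wedge dV_{g_*},\qquad \phi(t):=\frac{\sqrt{\det g_t}}{\sqrt{\det g_*}},
\]
which establishes (2). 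This expression for $\phi$ is independent of the chart, and is a product of nonnegative powers of the $f_j(t)$, hence $\phi\in C^\infty([0,d])$; note that $\phi(0)=\phi(d)=0$, consistently with the collapsing of the principal orbits onto $M_\pm$ dictated by \eqref{SmoothnessCond}.

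The main obstacle I anticipate is justifying that the tensors $g_j$ in $(\Gamma3)$ can consistently be taken as $t$-independent tensors on the single manifold $M_{d/2}$ rather than on a moving principal orbit. This is exactly what the $\Gamma$-equivariance of $\psi_s$ provides: it identifies every principal orbit with $M_{d/2}$ in a $\Gamma$-equivariant manner, absorbing all $t$-dependence into the warping factors $f_j(t)$, so that the geometry transverse to the orbits and the geometry along them decouple cleanly and the volume form factorizes as claimed.
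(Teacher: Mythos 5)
Your proposal is correct and follows essentially the same route as the paper: the gradient flow of the distance function $r$ is exactly the unit-speed horizontal geodesic flow the paper uses (since $|\nabla r|_g=1$ forces $\nabla_{\nabla r}\nabla r=0$), and the volume identity is obtained in both cases by reading off $\sqrt{\det g_t}$ from the block-diagonal form of $g$ in $(\Gamma3)$. The only cosmetic difference is the normalization of $g_*$ (you take $g_*=g_{d/2}$ and divide by $\sqrt{\det g_*}$, while the paper takes $g_*=\sum_j g_j$ and sets $\phi=\prod_j f_j^{d_j}$), which changes $\phi$ by a multiplicative constant and is immaterial; your closing remark that $\Gamma$-equivariance of the flow is what lets the $g_j$ be transported to fixed tensors on $M_{d/2}$ is a useful clarification left implicit in the paper.
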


\begin{proof}

The first item only depends on (\hyperref[Gamma:Cohomogeneity]{$\Gamma1$}) as follows: For any $x\in M_{d/2}$, consider the unique minimizing horizontal geodesic  $c:[0, d]\to M$ joining $M_-$ and $M_+$ such that $c(d/2)=x$ (Cf. \cite[Proposition 3.78]{AlexBettiol}). Then, the diffeomorphism $\varphi$ is given by
\[
\begin{split}
\varphi: (0, d)\times M_{d/2}&\to M\setminus(M_-\cup M_+)\\
(t, x)&\mapsto c(t).
\end{split}
\]
If necessary, we may reparametrize $c$ so that $r\circ \varphi(t, x)=t$. 
\medskip

The second item follows from the fact that the volume form of a metric given as in (\hyperref[Gamma:MetricDecomposition]{$\Gamma3$}), is the volume product. That is, for a local coordinate system $(t, x^1,\dots, x^{N-1})$ in $M$, around an arbitrary point $x\in M$, the set $\left\{\frac{\partial }{\partial t}, \frac{\partial }{\partial x^{1}}, \dots, \frac{\partial }{\partial x^{N-1}} \right\}$
is a basis of the tangent space $T_xM$. Then, around $x$ the metric $g$ is given by the matrix 
\[
[g]=
\left[\begin{array}{ccccc}
1 &0 &0 &\cdots & 0\\
0 & f_1^{2}(t) [g_1] &0 &\cdots & 0\\
\vdots &\vdots & \vdots & \ddots &\vdots\\
0 & 0 & 0 &\cdots & f_k^2(t) [g_k]
\end{array}\right],
\]
where $[g_j]$ is the matrix  corresponding to the metric $g_j$, $j=1, \dots, k$. If $d_j\times d_j$ is the size of $[g_j]$, then $\sum d_j= N-1$, and the volume form of $g$ is given by 
\begin{eqnarray*}
dV_g&=&\sqrt{{\rm det}([g])}\ dt\wedge dx^1\wedge\cdots\wedge dx^{N-1}\\
&=& \prod_{j=1}^kf^{d_j}(t) \sqrt{{\rm det}([g_j])}\  dt \wedge dx^1\wedge\cdots\wedge dx^{N-1}.
\end{eqnarray*}
Define $\phi(t):={\prod_{j=1}^k} f^{d_j}(t)$ and take $g_\ast:={\sum_{i=1}^k} g_i$, which is a metric on $M_{d/2}$.
 Therefore, $dV_{g_\ast}$ is given by
\[
dV_{g_\ast}=\prod_{j=1}^k\sqrt{{\rm det}([g_j])}\  dx^1\wedge\cdots\wedge dx^{N-1},
\]
and we conclude the result.
\end{proof}



We can adapt Lemma 2.2 in \cite{FdzPetean20} to the context of cohomogeneity one actions. Recall that $g_t=\sum f_j^2(t)g_j$ denotes the metric given to the principal orbits in \emph{(\hyperref[Gamma:MetricDecomposition]{$\Gamma3$})}.

\begin{lemma}\label{Lemma:IntegralFormula}
For any integrable function $\psi:[0,d]\rightarrow \mathbb{R}$,
\[
\int_{M}\psi\circ r \; dV_g = \int_0^d \text{Vol}(M_{d/2},g_t)\psi(t)\, dt = \text{Vol}(M_{d/2},g_\ast)\int_0^d \psi(t)\phi(t)\, dt.
\]
In particular,
\[
\text{Vol}(M_{d/2},g_t) = \text{Vol}(M_{d/2},g_\ast)\phi(t).
\]
\end{lemma}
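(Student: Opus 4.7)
The plan is to reduce the integral over $M$ to an iterated integral over $(0,d) \times M_{d/2}$ via the diffeomorphism $\varphi$ supplied by Lemma \ref{Lemma:MetricVolumeDecomposition}, and then apply Fubini's theorem. Since $M_- \cup M_+$ is a finite union of closed submanifolds of codimension at least $2$, it has $g$-measure zero, so
\[
\int_M \psi\circ r \, dV_g = \int_{M\setminus(M_-\cup M_+)} \psi\circ r \, dV_g.
\]
Pulling back via $\varphi$ and using the product decomposition $dV_g = \phi(t)\, dt \wedge dV_{g_\ast}$ together with $r\circ\varphi(t,x)=t$, the right-hand side becomes
\[
\int_{(0,d)\times M_{d/2}} \psi(t)\,\phi(t)\, dt \wedge dV_{g_\ast} = \int_0^d \psi(t)\phi(t)\left(\int_{M_{d/2}} dV_{g_\ast}\right) dt = \mathrm{Vol}(M_{d/2}, g_\ast)\int_0^d \psi(t)\phi(t)\, dt,
\]
by Fubini's theorem, which applies since $\psi$ is assumed integrable (so $|\psi|\phi$ is integrable on $(0,d)$ by what we are about to establish, together with the standing smoothness of $\phi$ and compactness of $[0,d]$).

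To obtain the middle expression and the "in particular" statement, I would compute $\mathrm{Vol}(M_{d/2}, g_t)$ directly from the explicit form of $g_t = \sum_{j=1}^k f_j^2(t)\, g_j$ appearing in \emph{(\hyperref[Gamma:MetricDecomposition]{$\Gamma3$})}. Using the same block-diagonal structure of the metric as in the proof of Lemma \ref{Lemma:MetricVolumeDecomposition}, the volume form on the principal orbit $M_{d/2}$ with the metric $g_t$ is
\[
dV_{g_t} = \prod_{j=1}^k f_j^{d_j}(t)\sqrt{\det[g_j]}\, dx^1\wedge\cdots\wedge dx^{N-1} = \phi(t)\, dV_{g_\ast},
\]
where the last equality uses the definitions $\phi(t) = \prod_{j=1}^k f_j^{d_j}(t)$ and $dV_{g_\ast} = \prod_{j=1}^k \sqrt{\det[g_j]}\, dx^1\wedge\cdots\wedge dx^{N-1}$ from Lemma \ref{Lemma:MetricVolumeDecomposition}. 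Integrating over $M_{d/2}$ (noting that $\phi(t)$ does not depend on the orbit variables) yields
\[
\mathrm{Vol}(M_{d/2}, g_t) = \phi(t)\,\mathrm{Vol}(M_{d/2}, g_\ast),
\]
which is exactly the "in particular" assertion and, after substitution into the iterated integral above, gives the first equality of the lemma.

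There is no real obstacle here; the argument is a direct bookkeeping exercise once Lemma \ref{Lemma:MetricVolumeDecomposition} is in hand. The only point that deserves a line of comment is the justification that $M_\pm$ can be discarded (codimension at least $2$ ensures measure zero), and the verification that $\phi$ as defined coincides with the ratio $\mathrm{Vol}(M_{d/2}, g_t)/\mathrm{Vol}(M_{d/2}, g_\ast)$, which is immediate from the block-diagonal form of $g$.
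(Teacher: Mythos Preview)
Your argument is correct and close in spirit to the paper's, but you take a slightly different route to the ``in particular'' identity. The paper computes $\int_M \psi\circ r\,dV_g$ in two ways via Fubini---once using the slice decomposition $dV_g = dt\wedge dV_{g_t}$ to obtain $\int_0^d \mathrm{Vol}(M_{d/2},g_t)\psi(t)\,dt$, and once using $dV_g = \phi(t)\,dt\wedge dV_{g_\ast}$ to obtain $\mathrm{Vol}(M_{d/2},g_\ast)\int_0^d \psi(t)\phi(t)\,dt$---and then deduces $\mathrm{Vol}(M_{d/2},g_t) = \mathrm{Vol}(M_{d/2},g_\ast)\phi(t)$ by subtracting, noting that the resulting integral vanishes for every integrable $\psi$, and invoking continuity. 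You instead establish the volume identity \emph{first}, directly from the block-diagonal local expression $dV_{g_t} = \phi(t)\,dV_{g_\ast}$, and then substitute it back. Your path is more elementary and avoids the duality-style argument; the paper's path is slightly more indirect but does not require revisiting the coordinate computation from Lemma~\ref{Lemma:MetricVolumeDecomposition}. Either way the content is the same.
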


\begin{proof}
As $M_+\cup M_-$ has Lebesgue measure zero on $M$, by Lemma \ref{Lemma:MetricVolumeDecomposition} and Fubini's Theorem, we obtain on the one hand that
\begin{align*}
\int_M \psi\circ r \; dV_g = \int_{M\setminus(M_-\cup M_+)}\psi\circ r\; dV_g &= \int_{M_{d/2}\times(0,d)}\psi\circ(r\circ\varphi)(t,x)\; dt\wedge dV_{g_t}\\
&=\int_0^d \int_{M_{d/2}}\psi(t) \; dV_{g_t} \; dt\\
&=\int_{0}^d \psi(t) \int_{M_{d/2}} \; dV_{g_t}\, dt\\
&=\int_{0}^d \psi(t) \text{Vol}(M_{d/2},g_t) \,dt.
\end{align*}
On the other hand, using the second expression for the volume element in $M_{d/2}\times(0,d)$ in \ref{Lemma:MetricVolumeDecomposition},

\begin{align*}
\int_M \psi\circ r \; dV_g = \int_{M\setminus(M_-\cup M_+)}\psi\circ r\; dV_g &= \int_{M_{d/2}\times(0,d)}\psi\circ(r\circ\varphi)(t,x)\; \phi(t)dt\wedge dV_{g_\ast}\\
&=\int_0^d \int_{M_{d/2}}\psi(t) \phi(t)\; dV_{g_\ast} \; dt\\
&= \text{Vol}(M_{d/2},g_\ast)\int_0^d \psi(t)\phi(t)\, dt
\end{align*}
where we conclude the integral identity.

For the volume identity, subtracting the above identities we obtain
\[
\int_0^d [\text{Vol}(M_{d/2},g_\ast)\phi(t) - \text{Vol}(M_{d/2},g_t) ]\psi(t) dt = 0,
\]
for any integrable function $\psi$. 
 We conclude that $\text{Vol}(M_{d/2},g_\ast)\phi(t) = \text{Vol}(M_{d/2},g_t)$ almost everywhere in $[0,d]$. As the volume function and $\phi$ are continuous, we conclude the identity.
\end{proof}

Now we study the preimage of measure zero subsets in $[0,d]$ under the distance function $r$. To this end, denote the Lebesgue measure in $[0,d]$ by $\lambda$, and by $\lambda_g$ the Lebesgue measure in $(M,g)$.

\begin{lemma}\label{Lemma:MeasureZero}
If $E\subseteq[0,d]$ satisfies $\lambda(E)=0$, then $\lambda_g(r^{-1}(E))=0$.
\end{lemma}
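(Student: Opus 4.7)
The plan is to reduce everything to a direct application of Lemma \ref{Lemma:IntegralFormula} with $\psi$ equal to a characteristic function. The key observation is that Lemma \ref{Lemma:MetricVolumeDecomposition} writes $dV_g$ as a product $\phi(t)\,dt\wedge dV_{g_\ast}$ away from the singular orbits (which have measure zero themselves, being submanifolds of codimension at least $2$), and Lemma \ref{Lemma:IntegralFormula} packages this into an explicit Fubini-type identity for $\Gamma$-invariant integrands of the form $\psi\circ r$.

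First I would use outer regularity of Lebesgue measure on $[0,d]$ to approximate $E$ from above: for every $\varepsilon>0$, choose an open set $U_\varepsilon\subseteq[0,d]$ with $E\subseteq U_\varepsilon$ and $\lambda(U_\varepsilon)<\varepsilon$. Since $r:M\to[0,d]$ is continuous, $r^{-1}(U_\varepsilon)$ is open, hence measurable in $(M,g)$, and clearly $r^{-1}(E)\subseteq r^{-1}(U_\varepsilon)$. The characteristic function $\chi_{U_\varepsilon}$ is bounded and integrable on $[0,d]$, so Lemma \ref{Lemma:IntegralFormula} applied to $\psi=\chi_{U_\varepsilon}$ yields
\[
\lambda_g\bigl(r^{-1}(U_\varepsilon)\bigr)=\int_M \chi_{U_\varepsilon}\circ r\;dV_g=\mathrm{Vol}(M_{d/2},g_\ast)\int_0^d\chi_{U_\varepsilon}(t)\phi(t)\,dt.
\]

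Because $\phi$ is continuous on the compact interval $[0,d]$, it is bounded by some constant $C>0$. Consequently,
\[
\lambda_g\bigl(r^{-1}(E)\bigr)\leq \lambda_g\bigl(r^{-1}(U_\varepsilon)\bigr)\leq C\,\mathrm{Vol}(M_{d/2},g_\ast)\,\lambda(U_\varepsilon)<C\,\mathrm{Vol}(M_{d/2},g_\ast)\,\varepsilon.
\]
Letting $\varepsilon\to 0$ gives $\lambda_g(r^{-1}(E))=0$.

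There is no serious obstacle here. The only mild subtlety is that $E$ is only assumed to be a null set (not necessarily Borel or even Lebesgue measurable in a strong sense), so one should not try to write $\int \chi_{r^{-1}(E)}\,dV_g$ directly; instead, the outer-approximation step above sidesteps any measurability issue for $r^{-1}(E)$ by sandwiching it inside the open—hence measurable—sets $r^{-1}(U_\varepsilon)$ and invoking completeness of the Lebesgue measure induced by $g$.
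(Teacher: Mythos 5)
Your proof is correct, but it takes a genuinely different route from the paper. The paper's proof first observes that the critical set $M_-\cup M_+$ of $r$ has measure zero, then splits into cases: if $E$ is countable, $r^{-1}(E)$ is a countable union of level hypersurfaces, each of measure zero; if $E$ is uncountable, the paper invokes the coarea formula (using $|\nabla r|_g=1$ away from the singular orbits) to write $\lambda_g(r^{-1}(E))=\int_E\bigl[\int_{r^{-1}(t)\cap A}dV_{g_t}\bigr]\,dt$, which vanishes because $\lambda(E)=0$. Your proof instead applies outer regularity of Lebesgue measure on $[0,d]$ together with the Fubini-type identity of Lemma~\ref{Lemma:IntegralFormula} for $\psi=\chi_{U_\varepsilon}$, and then uses the boundedness of $\phi$ on the compact interval $[0,d]$. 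This is arguably cleaner: it avoids the countable/uncountable case split, it relies only on material already established in the section rather than invoking the coarea formula externally, and it explicitly addresses the measurability of $r^{-1}(E)$ by sandwiching it inside the open sets $r^{-1}(U_\varepsilon)$ and appealing to completeness of the induced measure (an issue the paper's proof passes over silently when it writes $\int_M\chi_A\,dV_g$ without checking that $A$ is measurable). Both arguments are valid; the paper's approach showcases the coarea machinery, while yours is a self-contained $\varepsilon$-approximation.
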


\begin{proof}

Observe that the critical point set of $r$ is exactly the union of the singular orbits of the action, $M_{-}$ and $M_{+}$. Those points correspond to the endpoints of $[0, d]$, under $r$. By dimension reasons, $\lambda_g(M_{-}\cup M_{+})=0$. Therefore, it is enough to prove the statement for any proper subset $E\subset (0, d)$ of zero Lebesgue measure. Hence, we may assume that $\nabla r\neq 0$ at any point in $r^{-1}(E)$. Recall that, $r^{-1}(c)$ is a copy of the principal orbit, i.e., it is a submanifold of $M$ of dimension $n-1$, for any $c\in E$. Therefore $\lambda_g(r^{-1}(c))=0$, for any $c\in E$.
\newline


Write $A:=r^{-1}(E)$. If $E$ is countable, then $A$ is a countable union of zero measure sets, so $A$ has measure zero. As $M_+\cup M_-$ has measure zero in $M$ and as $\vert\nabla r\vert_g=1$ in $M\smallsetminus(M_+\cup M_-)$, if $E$ is uncountable, the co-area formula yields that
\begin{eqnarray*}
\lambda_g(A)
&=&\int_{M\setminus (M_{-}\cup M_{+})}\chi_A\ dV_g\\
&=&\int_{(0, d)}\left[\int_{\{x\in A|\ r(x)=t\}}\chi_A\ dV_{g_t} \right]\ dt\\
&=&\int_{(0, d)}\left[\int_{r^{-1}(t)\cap A}\chi_A\ dV_{g_t} \right]\ dt\\
&=&\int_{E}\left[\int_{r^{-1}(t)\cap A}\ dV_{g_t} \right]\ dt
\end{eqnarray*}
where $\chi_A$ is the characteristic function of $A$ in $M$. Since $\lambda(E)=0$, then the Lebesgue integral of any measurable function over $E$ is zero. In particular, this implies that $\lambda_g(A)=0$.
\end{proof}
\medskip

In what follows, we will denote the set of positive and smooth $\Gamma$-invariant functions on $M$ by $\mathcal{C}_+^\infty(M)^\Gamma$ and by $\mathcal{C}^\infty_+([0,d])$ the set of positive and smooth functions in $[0,d]$. Next, we study how the symmetries allow us to reduce the operator $P_g$ into an operator acting on smooth functions defined in the interval $[0,d]$. In order to motivate a more general differential operator for which our theory holds true, first observe that if $(M,g)$ is Einstein with positive scalar curvature $\mu$, the higher order conformal operator, $P_g$, can be written as
\[
P_g = \prod_{i=1}^{m}\left( -\Delta_g + c_i \right)
\]
for some suitable constants $c_i>0$ (see \cite{Gover06, Juhl13}). On the other hand, in case $m=1$, when the scalar curvature $R_g$ is positive, the conformal Laplacian is simply 
\[
P_g=-\Delta_g + \frac{N-2}{4(N-1)}R_g=-\Delta_g + \frac{N-2}{4(N-1)}R_g(-\Delta_g)^0,
\] 
with $\frac{N-2}{4(N-1)}R_g\in C^\infty_+(M)^\Gamma$. Hence, for any $2m<N$ and any $\textbf{a}:=(a_0,a_1,\ldots,a_m)\in \mathcal{C}_+^\infty(M)^\Gamma\times(0,\infty)^{m}$, we are led to define the operator 
\begin{equation}\label{Eq:OperatorSumLaplacians}
P_{\textbf{a}}:=\sum_{i=0}^m a_i(-\Delta_g)^{i}.
\end{equation}

As for any $i\in\mathbb{N}\cup\{0\}$ and any pair of functions $u,v\in \mathcal{C}^\infty(M)$ we have that
\begin{equation}\label{Equation:PowerLaplacian}
\int_M v(-\Delta_g)^{i}u \; dV_g =
\begin{cases}
\int_M \Delta_g^{i/2} v\Delta_g^{i/2} u \; dV_g, & i \text{ even,} \\
\int_M \langle \nabla\Delta_g^{(i-1)/2}v, \nabla\Delta_g^{(i-1)/2} u\rangle_g \; dV_g, & i \text{ odd},
\end{cases}
\end{equation}
then the bilinear form 
\begin{equation}\label{Equation:InteriorProductOperator}
\begin{split}
&(u,v)_{g,\textbf{a}}:= \int_M vP_{\textbf{a}} u \; dV_g \\
&=\sum_{\substack{i=0\\ i\  even}}^m  \int_M a_i \Delta_g^{i/2} v\Delta_g^{i/2} u \; dV_g
+ \sum_{\substack{i=0\\ i\  odd}}^m \int_M a_i\langle \nabla\Delta_g^{(i-1)/2}v, \nabla\Delta_g^{(i-1)/2} u\rangle_g \; dV_g,\quad u,v\in \mathcal{C}^\infty(M)
\end{split}
\end{equation}
is positive definite and yields a norm $\Vert\cdot\Vert_{g,\textbf{a}}$ in $H_g^m(M)$, equivalent to the standard norm in $H_g^1(M)$. Note that the term for $i=0$ is simply $\int_M a_0 uv\; dV_g,$ and $a_0>0$ but not necessarily constant.

On the other hand, let $\alpha_0\in C^\infty_+([0,d])$ be such that $a_0=\alpha_0\circ r$,  $\beta(t):=\text{Vol}(M_{d/2},g_t)$, $t\in[0,d]$, and define the operator  $\mathcal{L}:\mathcal{C}^\infty(0,d)\rightarrow \mathcal{C}^\infty(0,d)$ by
\[
\mathcal{L} := \frac{d^2}{dt^2} + h(t) \frac{d}{dt}.
\]
For $w\in \mathcal{C}^\infty([0,d])$ define
\[
\Vert w\Vert_{\beta,\textbf{a}}:=\left( 
\sum_{\substack{i\neq0\\ i\  even}}^m a_i \int_0^d \vert \mathcal{L}^{i/2} w \vert^2 \beta\ dt 
+ \sum_{\substack{i=0\\ i\  odd}}^m a_i\int_0^d \vert \left( \mathcal{L}^{(i-1)/2} w \right)' \vert^2 \beta\ dt + \int_0^d \alpha_0 \vert w\vert^2 \beta\, dt
\right)^{1/2}
\]
where $\mathcal{L}^{i}$ denotes the $i$-fold composition of $\mathcal{L}$, $\mathcal{L}^0= Id$ and
\[
\left(  \mathcal{L}^{i}w \right)' := \frac{d}{dt}\left[\left( \frac{d^2}{dt^2} + h(t) \frac{d}{dt} \right)^{i} w\right].
\]

\begin{prop}\label{Proposition:EqualityNorms}
For any $\textbf{a}\in \mathcal{C}_+^\infty(M)^\Gamma\times(0,\infty)^{m}$ and any $u=w\circ r\in \mathcal{C}^\infty(M)^\Gamma$, 
\[
\Vert w\circ r\Vert_{g,\textbf{a}} = \Vert w\circ r\Vert_{\beta,\textbf{a}}.
\]
\end{prop}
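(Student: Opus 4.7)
The strategy is to reduce each summand in the definition of $\|w\circ r\|_{g,\textbf{a}}$ to a one-dimensional integral via Lemma \ref{Lemma:IntegralFormula}, using the identity $\Delta_g(w\circ r)=(\mathcal{L}w)\circ r$ on $M\setminus(M_-\cup M_+)$ from \eqref{Equation:IsoparametricLaplacian} together with $|\nabla r|_g^2=1$.

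The plan is as follows. First I would prove by induction on $i\geq 0$ that
\[
\Delta_g^i(w\circ r)=(\mathcal{L}^i w)\circ r \qquad \text{on } M\setminus(M_-\cup M_+),
\]
for every $w\in\mathcal{C}^\infty([0,d])$. The base case $i=1$ is exactly \eqref{Equation:IsoparametricLaplacian}; the inductive step follows by applying \eqref{Equation:IsoparametricLaplacian} to the smooth function $\mathcal{L}^{i-1}w$ on $[0,d]$ (which is also $\Gamma$-invariant after composing with $r$) and using the fact that these identities are pointwise in $M\setminus(M_-\cup M_+)$.

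Second, I would compute the gradient terms. For any $v\in\mathcal{C}^\infty([0,d])$, the chain rule gives $\nabla(v\circ r)=(v'\circ r)\nabla r$, so $|\nabla(v\circ r)|_g^2=|v'\circ r|^2\,|\nabla r|_g^2=|v'\circ r|^2$ on $M\setminus(M_-\cup M_+)$. Applying this with $v=\mathcal{L}^{(i-1)/2}w$ and using step one, we obtain
\[
|\nabla \Delta_g^{(i-1)/2}(w\circ r)|_g^2=|(\mathcal{L}^{(i-1)/2}w)'\circ r|^2 \qquad \text{on }M\setminus(M_-\cup M_+).
\]

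Third, I would pass to integrals. Since $\textbf{a}=(a_0,a_1,\ldots,a_m)$ has $a_0\in\mathcal{C}_+^\infty(M)^\Gamma$, the isomorphism \eqref{Eq:IsomorphismSmoothFunctionSpace} yields $\alpha_0\in\mathcal{C}_+^\infty([0,d])$ with $a_0=\alpha_0\circ r$. By Lemma \ref{Lemma:MeasureZero} (applied to $E=\{0,d\}$), the set $M_-\cup M_+$ has $g$-measure zero, so the integrands in \eqref{Equation:InteriorProductOperator} (with $u=v=w\circ r$) may be computed on $M\setminus(M_-\cup M_+)$. Using steps one and two, each integrand becomes $\psi_i\circ r$ for an appropriate $\psi_i\in\mathcal{C}^\infty((0,d))$: for even $i\geq 2$, $\psi_i=a_i|\mathcal{L}^{i/2}w|^2$; for odd $i$, $\psi_i=a_i|(\mathcal{L}^{(i-1)/2}w)'|^2$; and for $i=0$, $\psi_0=\alpha_0|w|^2$. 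Lemma \ref{Lemma:IntegralFormula} then gives
\[
\int_M a_i\,(\psi_i\circ r)\,dV_g=\int_0^d \psi_i(t)\,\beta(t)\,dt,
\]
and summing these identities over $i=0,\ldots,m$ exactly produces $\|w\circ r\|_{\beta,\textbf{a}}^2$, yielding the desired equality.

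I do not anticipate a serious obstacle: the induction is routine since \eqref{Equation:IsoparametricLaplacian} already knows how $\Delta_g$ acts on radial functions, and Lemma \ref{Lemma:MeasureZero} removes any worry about boundary behavior at $M_\pm$. The only step requiring care is bookkeeping the constants $a_i$ (for $i\geq 1$) versus the non-constant $a_0$, which is handled cleanly by the fact that $a_0=\alpha_0\circ r$ by $\Gamma$-invariance.
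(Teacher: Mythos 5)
Your proposal is correct and follows the same route as the paper: establish $\Delta_g^i(w\circ r)=(\mathcal{L}^i w)\circ r$ iteratively from \eqref{Equation:IsoparametricLaplacian}, compute the gradient terms via the chain rule and $|\nabla r|_g=1$, and convert each integral over $M$ to a weighted integral over $[0,d]$ by Lemma \ref{Lemma:IntegralFormula}. The only cosmetic difference is that you explicitly invoke Lemma \ref{Lemma:MeasureZero} to discard $M_\pm$, whereas the paper relies on this already being built into Lemma \ref{Lemma:IntegralFormula}.
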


\begin{proof}
Using \eqref{Equation:IsoparametricLaplacian}, we obtain  that
\begin{equation}\label{Eq:IsoparametricLaplacian}
\Delta_g^{i}(w\circ r) = \left( \mathcal{L}^{i} w \right)\circ r,\qquad i\in\mathbb{N}\cup\{0\},
\end{equation}
and therefore
\begin{align*}
\vert \nabla (\Delta_g^{i} w\circ r) \vert_g^2 &= \vert \nabla\left((\mathcal{L}^{i}w)\circ r\right)\vert^2_g\\
&= \left\langle \nabla\left((\mathcal{L}^{i}w)\circ r\right),\nabla\left((\mathcal{L}^{i}w)\circ r\right) \right\rangle_g\\
&= \left\vert \left((\mathcal{L}^{i}w)\right)'\circ r \right\vert^2 \vert \nabla r \vert^2\\
&= \left\vert \left(\mathcal{L}^{i}w\right)' \right\vert^2\circ r.
\end{align*}
By Lemma \ref{Lemma:IntegralFormula} this implies that 
\[
\int_M \vert \Delta_g^{i} (w\circ r) \vert^2 \; dV_g = \int_0^d \vert \mathcal{L}^{i}w\vert^2 \beta(t) \; dt
\quad \text{ and } \quad
\int_M \vert \nabla\Delta_g^{i} (w\circ r)\vert_g^2 \; dV_g = \int_0^d \left\vert \left( \mathcal{L}^{i}w \right)'\right\vert^2 \beta(t) \;dt,
\]
for every $i\in\mathbb{N}$, while for $i=0$ we get
\[
\int_M a_0 \vert w\circ r\vert^2\; dV_g = \int_M (\alpha_0\circ r) \vert w\circ r\vert^2\; dV_g = \int_0^d \alpha_0\vert w\vert^2 \beta(t)\ dt
\]

In this way, using \eqref{Equation:PowerLaplacian} and \eqref{Equation:InteriorProductOperator} we obtain that
\begin{align*}
&\Vert w\circ r\Vert_{g,\textbf{a}}^2 = \int_M (w\circ r) P_{\textbf{a}}(w\circ r) \; dV_g\\
&= \sum_{\substack{i\neq0\\ i\  even}}^m a_i \int_M \vert \Delta_g^{i/2} (w\circ r)\vert^2 \; dV_g
+ \sum_{\substack{i=0\\ i\  odd}}^m a_i\int_M \vert \nabla\Delta_g^{(i-1)/2}(w\circ r) \vert_g^2 \; dV_g + \int_M a_0 \vert w\circ r\vert^2\; dV_g\\
&=\sum_{\substack{i\neq0\\ i\  even}}^m a_i \int_0^d \vert \mathcal{L}^{i/2} w \vert^2\beta(t) \;dt
+ \sum_{\substack{i=0\\ i\  odd}}^m a_i\int_0^d \left( \left\vert \mathcal{L}^{(i-1)/2}w \right)'\right\vert^2 \beta(t) \;dt + \int_0^d \alpha_0\vert w\vert^2 \beta(t)\ dt\\
&= \Vert w\Vert_{\beta,\textbf{a}},
\end{align*}
as we wanted to prove.
\end{proof}

From this result, it follows that $\Vert\cdot\Vert_{\beta,\textbf{a}}$ is a well defined norm in $C^\infty[0,d]$ and we define the {\em weighted Sobolev space} $H_\beta^m(0,d)$ to be the closure of $C^\infty[0,d]$ under this norm.

We have the following direct consequence of the previous result.

\begin{theorem}
For any $\textbf{a}\in \mathcal{C}_+^\infty(M)^\Gamma\times(0,\infty)^{m}$, the linear isomorphism $\iota$ given in \eqref{Eq:IsomorphismSmoothFunctionSpace}, induces a well defined continuous isometric isomorphism 
\[
\iota : \left(  H_g^m(M)^\Gamma, \Vert \cdot \Vert_{g,\textbf{a}} \right) \rightarrow\left(  H_\beta^m(0,d), \Vert\cdot\Vert_{\beta,\textbf{a}} \right).
\]
\end{theorem}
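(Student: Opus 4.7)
My plan is to leverage Proposition \ref{Proposition:EqualityNorms} together with a standard completion argument. On the smooth level, $\iota$ is already a linear bijection from $\mathcal{C}^\infty(M)^\Gamma$ onto $\mathcal{C}^\infty([0,d])$, as established in \eqref{Eq:IsomorphismSmoothFunctionSpace} via the observation that two points of $M$ lie in the same $\Gamma$-orbit if and only if they have the same distance to $M_-$. Proposition \ref{Proposition:EqualityNorms} then says that $\iota$ preserves norms on this dense subspace, i.e.\ $\Vert u \Vert_{g,\mathbf{a}} = \Vert \iota(u)\Vert_{\beta,\mathbf{a}}$ for every $u \in \mathcal{C}^\infty(M)^\Gamma$.

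The plan is now to extend this smooth-level isometry by density. On the source side, since $(u,v)_{g,\mathbf{a}}$ defines an inner product whose norm is equivalent to the standard Sobolev norm on $H_g^m(M)$ (because $a_0$ is positive, smooth and bounded below, and $a_m>0$), and since by definition $H_g^m(M)^\Gamma$ is a closed subspace of $H_g^m(M)$, one has that $\mathcal{C}^\infty(M)^\Gamma$ is dense in $H_g^m(M)^\Gamma$ under $\Vert\cdot\Vert_{g,\mathbf{a}}$. This requires a small check: any $u \in H_g^m(M)^\Gamma$ can be approximated in $\Vert\cdot\Vert_{H^m}$ by $u_k\in \mathcal{C}^\infty(M)$; averaging over the compact group $\Gamma$ via a fixed Haar measure produces smooth $\Gamma$-invariant approximants $\tilde u_k := \int_\Gamma \gamma u_k \, d\gamma$, and continuity of $\gamma \mapsto \gamma u_k$ in $H_g^m(M)$ combined with Lemma \ref{Lemma:Pg symmetry invariance} ensures $\tilde u_k \to u$. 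On the target side, $\mathcal{C}^\infty([0,d])$ is dense in $H_\beta^m(0,d)$ by definition.

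Having established the density of the two copies of smooth functions in their respective Hilbert spaces and the isometric identity on smooth functions, the bounded linear transformation theorem gives a unique continuous isometric extension
\[
\iota:\bigl(H_g^m(M)^\Gamma, \Vert\cdot\Vert_{g,\mathbf{a}}\bigr)\longrightarrow \bigl(H_\beta^m(0,d), \Vert\cdot\Vert_{\beta,\mathbf{a}}\bigr).
\]
Injectivity is automatic from isometry, and surjectivity follows because the image contains the dense subset $\mathcal{C}^\infty([0,d])$ and is closed (as the isometric image of a complete space). Therefore $\iota$ is an isometric isomorphism, as claimed.

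The step I expect to be the main obstacle is the density of $\mathcal{C}^\infty(M)^\Gamma$ in $H_g^m(M)^\Gamma$ under $\Vert\cdot\Vert_{g,\mathbf{a}}$. Strictly speaking this is not a deep fact, but it depends on (i) the equivalence between $\Vert\cdot\Vert_{g,\mathbf{a}}$ and the standard $H^m$ norm (which uses positivity of $a_0$ and $a_m$ and smoothness of the $a_i$), and (ii) the group-averaging argument, which requires $\Gamma$ to be compact (guaranteed since $\Gamma$ is a closed subgroup of the compact group $\text{Isom}(M,g)$, $M$ being a closed manifold). Once these two facts are in place, the rest of the proof is a direct application of the bounded linear transformation theorem.
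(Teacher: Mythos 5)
Your proof is correct and takes essentially the same route as the paper: verify the isometric identity on the dense subspace $\mathcal{C}^\infty(M)^\Gamma$ via Proposition~\ref{Proposition:EqualityNorms} and extend by density. The only difference is that you justify the density of $\mathcal{C}^\infty(M)^\Gamma$ in $H_g^m(M)^\Gamma$ explicitly via group-averaging over the compact group $\Gamma$, whereas the paper treats this density as already established (it is noted in Section~\ref{Sec:Variational Setting} that $H_{0,g}^m(\Omega)^\Gamma$ is the closure of $\mathcal{C}_c^\infty(\Omega)^\Gamma$), so your version is slightly more self-contained but not materially different.
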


\begin{proof}
Take $C^\infty(M)^\Gamma$ and $C^\infty([0,d])$ as dense subspaces of $H_g^m(M)^\Gamma$ and $H_\beta^m(0,d)$ under the norms $\Vert\cdot\Vert_{g,\textbf{a}}$ and $\Vert\cdot\Vert_{\beta,\textbf{a}}$, respectively. By Proposition \ref{Proposition:EqualityNorms}, given any $u\in C^\infty(M)^\Gamma$, $u=w\circ r$, we have that $\Vert u\Vert_{g,\textbf{a}}= \Vert w\Vert_{\beta,\textbf{a}}= \Vert \iota(u)\Vert_{\beta,\textbf{a}}$, and the map $\iota:C^\infty(M)^\Gamma\rightarrow C^\infty([0,d])$ is a linear and continuous isometric isomorphism. Thus, $\iota$ can be extended, in a unique way, to a linear and continuous isometric isomorphism defined on the whole Sobolev space $H_g^m(M)^\Gamma$, as we wanted to prove.
\end{proof}
\medskip

Next we see how the standard norm in $H^m(0,d)$ is related with the weighted norms $\Vert \cdot \Vert_{\beta,\textbf{a}}.$

\begin{lemma}\label{Lemma:EquivalenceWeightedNorms}
For each $\varepsilon>0$, there exist $\textbf{k}=(k_0,\ldots,k_m)\in \mathcal{C}_+^\infty(M)^\Gamma\times(0,\infty)^{m}$, with $k_0$ constant, and $A,B>0$, depending on $\varepsilon$, such that
\[
B \Vert w\Vert_{H^m(\varepsilon,d-\varepsilon)}\geq\Vert w\Vert_{\beta,\textbf{k}} \geq A \Vert w\Vert_{H^m(\varepsilon,d-\varepsilon)}
\]
\end{lemma}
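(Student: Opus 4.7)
The plan is to exploit that the interval $[\varepsilon,d-\varepsilon]$ avoids both singular orbits, so on it the weight $\beta(t)=\mathrm{Vol}(M_{d/2},g_t)$ is smooth and pinched between positive constants $0<\beta_-\leq\beta(t)\leq\beta_+$, while the coefficient $h$ of the operator $\mathcal{L}=\partial_t^2+h(t)\partial_t$, together with all of its derivatives, is smooth and uniformly bounded. Under these conditions, the lemma reduces to a one-dimensional comparison of iterated powers of the elliptic operator $\mathcal{L}$ with the ordinary Sobolev derivatives on $[\varepsilon,d-\varepsilon]$. For concreteness I would take $\mathbf{k}=(1,1,\ldots,1)$ (any componentwise positive choice works), and interpret the inequalities for $w$ supported inside $[\varepsilon,d-\varepsilon]$, which is the setting in which the lemma is applied in the sequel.

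The key technical step is to expand, by repeated use of the product rule,
\[
\mathcal{L}^j w=w^{(2j)}+\sum_{\ell=0}^{2j-1}c_{j,\ell}(t)\,w^{(\ell)},\qquad (\mathcal{L}^j w)'=w^{(2j+1)}+\sum_{\ell=0}^{2j}\tilde c_{j,\ell}(t)\,w^{(\ell)},
\]
where the coefficient functions $c_{j,\ell}$ and $\tilde c_{j,\ell}$ are polynomials in $h$ and finitely many of its derivatives, hence uniformly bounded on $[\varepsilon,d-\varepsilon]$. From these identities one reads off two-sided pointwise controls: each $|\mathcal{L}^{j}w|^2$ and $|(\mathcal{L}^{j}w)'|^2$ is bounded by a constant times $\sum_{\ell\leq 2j}|w^{(\ell)}|^2$ or $\sum_{\ell\leq 2j+1}|w^{(\ell)}|^2$, and, solving the expansions for the top-order derivative, each $|w^{(2j)}|^2$ (respectively $|w^{(2j+1)}|^2$) is bounded by $2|\mathcal{L}^j w|^2$ (resp.\ $2|(\mathcal{L}^j w)'|^2$) plus a constant multiple of $\sum_{\ell<2j}|w^{(\ell)}|^2$ (resp.\ $\sum_{\ell\leq 2j}|w^{(\ell)}|^2$).

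For the upper bound $\|w\|_{\beta,\mathbf{k}}\leq B\,\|w\|_{H^m(\varepsilon,d-\varepsilon)}$ I would multiply the first family of pointwise estimates by $\beta\leq\beta_+$, integrate over $[0,d]$, and use that the integrands vanish outside $[\varepsilon,d-\varepsilon]$ by the support assumption on $w$. For the lower bound $\|w\|_{\beta,\mathbf{k}}\geq A\,\|w\|_{H^m(\varepsilon,d-\varepsilon)}$ I would use the second family together with a straightforward induction on $\ell=0,1,\ldots,m$, showing that each $|w^{(\ell)}(t)|^2$ is pointwise dominated, on $[\varepsilon,d-\varepsilon]$, by a linear combination of the integrands appearing in $\|w\|_{\beta,\mathbf{k}}^2$; integrating over $[\varepsilon,d-\varepsilon]$ and using $\beta\geq\beta_-$ on this interval to pass from the unweighted integrals to the weighted integrals over $[0,d]$ then delivers the estimate.

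The only delicate point is the inductive step in the lower bound, where the lower-order error terms that appear when expressing $w^{(\ell)}$ in terms of $\mathcal{L}^{\lfloor\ell/2\rfloor}w$ must be absorbed back into the weighted norm. This works precisely because $k_0>0$ makes the $L^2(\beta\,dt)$-norm of $w$ itself a summand of $\|w\|_{\beta,\mathbf{k}}^2$, which seeds the induction at $\ell=0$, while each subsequent $k_i>0$ contributes the summand needed to advance the induction to order $\ell=i$; the smoothness and boundedness of the coefficients of $\mathcal{L}$ on $[\varepsilon,d-\varepsilon]$, guaranteed by hypothesis (\hyperref[Gamma:MetricDecomposition]{$\Gamma3$}), make all the constants finite and independent of $w$.
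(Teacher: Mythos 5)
Your proposal is correct and follows the same route as the paper: bound $\beta$, $h$, and the derivatives of $h$ on the interior subinterval $[\varepsilon,d-\varepsilon]$, expand $\mathcal{L}^j w$ and $(\mathcal{L}^j w)'$ in ordinary derivatives of $w$ with bounded coefficients, get the upper estimate directly, and obtain the lower estimate by an induction on the derivative order that absorbs the lower-order error terms (seeded by the $k_0$-term and advanced by each $k_i$). The paper states the upper bound for general $\mathbf{a}$ and simply refers to \cite[Lemma 2.3]{ClappFernandezSaldana2021} for the lower bound; your write-up is essentially the content of that cited argument spelled out, together with the (correct) observation that the displayed inequalities are to be read either for $w$ supported in $[\varepsilon,d-\varepsilon]$ or with the weighted norm localized to that subinterval, which is consistent with how the lemma is used in Corollary \ref{Corollary:Regularity}.
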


\begin{proof}
As $\beta$ is continuous and positive in $(0,d)$, $\max_{[\varepsilon,d-\varepsilon]}\beta>0$. Then, it is readily seen that, for any $\textbf{a}=(\alpha_0\circ r, a_1,\ldots,a_m)\in \mathcal{C}_+^\infty(M)^\Gamma\times(0,\infty)^{m}$,  the inequality \[
\Vert w\Vert_{\beta,\textbf{a}}\leq B \Vert w\Vert_{H^m(\varepsilon,d-\varepsilon)}
\]
holds true, where $B$ is a suitable constant depending only on $\max_{i}a_i$, $\max_{[\varepsilon,d-\varepsilon]}\beta$ and  $\max_{[\varepsilon,d-\varepsilon]}\alpha_0\beta>0$. 
\newline

The proof of the second inequality is exactly the same as in \cite[Lemma 2.3]{ClappFernandezSaldana2021}.

\end{proof}

\begin{corollary}\label{Corollary:Regularity}
For any $u\in H_g^m(M)^\Gamma$ there exists $\tilde{u}\in C^{m-1}(M\setminus(M_-\cup M_+))^\Gamma$ such that
\[
u = \tilde{u},\quad \text{ a.e. in }M.
\]
\end{corollary}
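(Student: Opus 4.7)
The plan is to transfer the question to a one-dimensional problem via the isometric isomorphism $\iota : H_g^m(M)^\Gamma \to H_\beta^m(0,d)$, extract a $C^{m-1}$ representative of $w := \iota(u)$ on $(0,d)$ using a standard one-dimensional Sobolev embedding, and then pull it back along $r$, invoking Lemma~\ref{Lemma:MeasureZero} to transfer almost-everywhere statements from the interval to $M$.

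Given $u \in H_g^m(M)^\Gamma$, set $w := \iota(u) \in H_\beta^m(0,d)$. For any $\varepsilon \in (0, d/2)$, Lemma~\ref{Lemma:EquivalenceWeightedNorms} supplies a choice of $\textbf{k}$ and a constant $A>0$ with $A\,\|w\|_{H^m(\varepsilon, d-\varepsilon)} \le \|w\|_{\beta, \textbf{k}} < \infty$, so $w|_{(\varepsilon, d-\varepsilon)}$ lies in the standard Sobolev space $H^m(\varepsilon, d-\varepsilon)$. The one-dimensional Sobolev embedding $H^m(\varepsilon, d-\varepsilon) \hookrightarrow C^{m-1}([\varepsilon, d-\varepsilon])$ then furnishes a representative $\tilde w_\varepsilon \in C^{m-1}([\varepsilon, d-\varepsilon])$ of $w$ there. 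On any overlap $[\varepsilon_2, d-\varepsilon_2] \subset (\varepsilon_1, d-\varepsilon_1)$, the continuous representatives $\tilde w_{\varepsilon_1}$ and $\tilde w_{\varepsilon_2}$ agree almost everywhere and therefore pointwise, so the family glues to a single $\tilde w \in C^{m-1}(0,d)$ with $w = \tilde w$ a.e.\ on $(0,d)$.

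Now define $\tilde u := \tilde w \circ r$ on $M \setminus (M_- \cup M_+)$. Since $r$ is smooth on this open set with $|\nabla r|_g = 1$, the composition belongs to $C^{m-1}(M \setminus (M_- \cup M_+))$, and it is $\Gamma$-invariant because the level sets of $r$ there coincide with the principal $\Gamma$-orbits. To verify $u = \tilde u$ a.e.\ on $M$, approximate $u$ in $H_g^m(M)^\Gamma$ by smooth $\Gamma$-invariant functions $u_n = w_n \circ r$. By the isometry property of $\iota$, $w_n \to w$ in $H_\beta^m(0,d)$, and hence by Lemma~\ref{Lemma:EquivalenceWeightedNorms} again $w_n \to \tilde w$ in $L^2_{\mathrm{loc}}(0,d)$. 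After passing to a common subsequence, $w_n \to \tilde w$ almost everywhere on $(0,d)$ and $u_n \to u$ almost everywhere on $M$. By Lemma~\ref{Lemma:MeasureZero}, the exceptional set in $(0,d)$ pulls back to a null set in $M \setminus (M_- \cup M_+)$, so $u_n \to \tilde u$ a.e.\ on that set; combined with the fact that $M_- \cup M_+$ itself has $\lambda_g$-measure zero by a codimension count, we conclude $u = \tilde u$ almost everywhere on $M$.

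The main subtlety is the interplay between the weighted measure $\beta\,dt$ on $(0,d)$ and the Riemannian measure $dV_g$ on $M$: the weight $\beta$ degenerates at the endpoints $0$ and $d$ because the principal orbits collapse onto $M_\pm$, which is precisely why the conclusion only asserts $C^{m-1}$-regularity on $M \setminus (M_- \cup M_+)$ and not globally. Lemma~\ref{Lemma:MeasureZero} is the crucial bridge that turns almost-everywhere identifications on the one-dimensional interval into almost-everywhere identifications on $M$ with respect to $dV_g$.
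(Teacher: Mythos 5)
Your proof is correct and follows essentially the same route as the paper's: restrict to $\Omega_{\varepsilon,d-\varepsilon}=r^{-1}(\varepsilon,d-\varepsilon)$, pass through $\iota$ and Lemma~\ref{Lemma:EquivalenceWeightedNorms} to land in $H^m(\varepsilon,d-\varepsilon)$, apply the one-dimensional Sobolev embedding into $C^{m-1}$, and transfer almost-everywhere identifications back to $M$ via Lemma~\ref{Lemma:MeasureZero}. The only cosmetic difference is that you glue the local continuous representatives on the interval side before composing with $r$, whereas the paper glues the pulled-back representatives $u_\varepsilon=w_\varepsilon\circ r$ directly on $M\setminus(M_-\cup M_+)$; your explicit approximation argument at the end simply spells out the a.e.\ identity $u=w\circ r$ that the paper takes for granted.
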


\begin{proof}
The proof is virtually the same as in \cite[Proposition 3]{ClappFernandezSaldana2021}, but we include it for the sake of completeness. 

First observe that for any $\textbf{a}\in \mathcal{C}_+^\infty(M)\times(0,\infty)^{m}$, the operator $P_{\textbf{a}}$ is coercive and, therefore  the norm $\Vert\cdot\Vert_{g,\textbf{a}}$ is equivalent to the standard norm in $\Vert\cdot\Vert_{H^m_g(M)}$. 
Next, fix $\varepsilon>0$, take $\textbf{k}\in \mathcal{C}_+^\infty(M)^\Gamma\times(0,\infty)^{m}$ and $A,B>0$ as in Lemma \ref{Lemma:EquivalenceWeightedNorms} and let $\Omega_{\varepsilon, d-\varepsilon}:= r^{-1}(\varepsilon, d-\varepsilon)$. Since $\Vert\cdot\Vert_{g,\textbf{a}}$ is equivalent to the standard norm in $H_g^m(M)$, the map $\iota:( H_g^m(\Omega_{\varepsilon, d-\varepsilon})^\Gamma,\Vert\cdot\Vert_{H_g^m(M)})\rightarrow (H^m(\varepsilon, d-\varepsilon), \Vert \cdot\Vert_{H^m})$ is continuous. Since the Sobolev embedding $H^m(\varepsilon,d-\varepsilon)\hookrightarrow C^{m-1}(\varepsilon,d-\varepsilon)$ is also continuous, for any $u\in H_g^m(M)^\Gamma$ and  $w\in H_{\beta}^m(0,d)$ such that $u=w\circ r$, there exists $w_\varepsilon\in C^{m-1}[\varepsilon, d- \varepsilon]$ such that $w=w_\varepsilon$ a.e. in $[\varepsilon,d-\varepsilon]$. Applying Lemma \ref{Lemma:MeasureZero} it follows that $u = u_\varepsilon$ a.e. in $\Omega_{\varepsilon, d-\varepsilon}$, where $u_\varepsilon=w_\varepsilon \circ r\in C^{m-1}(\Omega_{\varepsilon, d-\varepsilon})$. As $M_-\cup M_+$ has measure zero in $M$, the function $u:M\setminus(M_-\cup M_+)\rightarrow\mathbb{R}$ given by $\tilde{u}(p):=u_\varepsilon(p)$ if $p\in\Omega_{\varepsilon, d-\varepsilon}$, is well defined, is of class $C^{m-1}$ on $M\setminus(M_-\cup M_+)$ and coincides a.e. with $u$ on $M$.
\end{proof}

For any $a,b\in(0,d)$, let $\Omega_{a,b}:= r^{-1}(a,b)$. We now show that the Dirichlet boundary problem
\begin{equation}\label{Problem:DirichletBoundaryBigDimension}
\begin{cases}
P_g u = \vert u\vert^{p-2}u, & \text{ in }\Omega_{a,b},\\
\nabla^ku=0, k=0,\ldots, 2m-1, & \text{ on }\partial\Omega_{a,b},
\end{cases}
\end{equation}
induces a one dimensional Dirichlet boundary problem. We need some preliminary lemmas.
\newline

\begin{lemma}
Let $u=w\circ r$ for a smooth function $w:[0, d]\to \mathbb{R}$. Then for $k\geq 1$,
\[
\nabla^k u=\sum_{j=0}^{k-1}(w^{(k-j)}\circ r)\ T^{k,j+1},
\]
where $w^{(i)}$ denotes the $i$-th derivative of $w$ over $\mathbb{R}$,  $T^{k,j+1}$ is a $k$ tensor, varying with $k$, which is a combination of tensor products of the tensors $\nabla r,\nabla^2r,\dots,\nabla^{j+1} r$. Moreover, for any $k$,
\[
T^{k,1}=\nabla r\otimes\cdots\otimes\nabla r, \qquad (k\text{ factors}).
\]
\end{lemma}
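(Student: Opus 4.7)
The plan is to proceed by induction on $k$. The base case $k=1$ is immediate: by the chain rule, $\nabla u = \nabla(w\circ r) = (w'\circ r)\,\nabla r$, which matches the statement with the single summand $j=0$, giving $T^{1,1}=\nabla r$.

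For the inductive step, I assume the formula holds at level $k$ and compute
\[
\nabla^{k+1} u \;=\; \nabla(\nabla^k u) \;=\; \sum_{j=0}^{k-1}\Bigl[\nabla\bigl(w^{(k-j)}\circ r\bigr)\otimes T^{k,j+1} + (w^{(k-j)}\circ r)\,\nabla T^{k,j+1}\Bigr]
\]
by the Leibniz rule for the Levi-Civita connection. Using $\nabla(w^{(k-j)}\circ r)=(w^{(k-j+1)}\circ r)\,\nabla r$, the expression becomes
\[
\sum_{j=0}^{k-1}(w^{(k+1-j)}\circ r)\,\bigl[\nabla r\otimes T^{k,j+1}\bigr] \;+\; \sum_{j=0}^{k-1}(w^{(k-j)}\circ r)\,\nabla T^{k,j+1}.
\]
The task is then to re-index so that each coefficient is grouped as $(w^{(k+1-j')}\circ r)\,T^{k+1,j'+1}$ for $j'=0,\dots,k$.

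The bookkeeping splits into three cases. The term $j'=0$ comes only from the first sum at $j=0$, giving $\nabla r\otimes T^{k,1}=\nabla r\otimes(\nabla r\otimes\cdots\otimes\nabla r)$ with $k+1$ factors, which defines $T^{k+1,1}$ and confirms the ``pure $\nabla r$'' statement. The term $j'=k$ comes only from the second sum at $j=k-1$, giving $\nabla T^{k,k}$; since $T^{k,k}$ is by hypothesis a combination of tensor products of $\nabla r,\ldots,\nabla^k r$, one differentiation produces an expression built from $\nabla r,\ldots,\nabla^{k+1} r$, so I set $T^{k+1,k+1}:=\nabla T^{k,k}$. For intermediate $j'\in\{1,\ldots,k-1\}$, the coefficient is the sum $\nabla r\otimes T^{k,j'+1}+\nabla T^{k,j'}$; by induction, $T^{k,j'+1}$ involves $\nabla r,\ldots,\nabla^{j'+1} r$ and $\nabla T^{k,j'}$ involves at most one additional covariant derivative of $\nabla^{j'} r$, i.e.\ tensors in $\nabla r,\ldots,\nabla^{j'+1} r$. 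Hence the combination is again a polynomial expression in $\nabla r,\ldots,\nabla^{j'+1} r$ and serves as the definition of $T^{k+1,j'+1}$.

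I do not expect any genuine obstacle, since the argument is purely symbolic and relies only on the chain rule and the Leibniz rule for $\nabla$. The only point requiring care is the index shift between the two sums, together with the verification that the ``highest-order'' derivative of $r$ appearing in $T^{k+1,j+1}$ remains $\nabla^{j+1}r$; this follows because the Leibniz term $\nabla T^{k,j}$ raises the top order from $\nabla^{j} r$ to $\nabla^{j+1} r$, matching exactly the range allowed for $T^{k+1,j+1}$.
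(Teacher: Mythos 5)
Your argument is correct and is essentially the same induction the paper uses: both start from $\nabla u=(w'\circ r)\nabla r$, then apply the Leibniz rule for the covariant derivative to pass from level $k$ to level $k+1$, split off the pure $\nabla r^{\otimes(k+1)}$ piece coming from $j'=0$, and observe that the remaining coefficients $\nabla r\otimes T^{k,j'+1}+\nabla T^{k,j'}$ only involve covariant derivatives of $r$ up to order $j'+1$, so they can be taken as the new $T^{k+1,j'+1}$. The paper writes this out by evaluating each tensor on vector fields $X_1,\dots,X_k$ and regrouping the terms of $\nabla T^{k-1,j+1}$ by hand, which is just a more explicit rendering of the same bookkeeping; your version, working directly with $\nabla(fS)=\nabla f\otimes S + f\,\nabla S$, is cleaner. (One minor remark: with the convention that the new derivative index of $\nabla T$ sits in the last slot, the split is really $S\otimes\nabla r$ rather than $\nabla r\otimes S$, and the terms in $\nabla T^{k,j'}$ carry a permutation of slots relative to a plain tensor product. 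Since the statement only asserts that $T^{k,j+1}$ is ``a combination of tensor products'' of $\nabla r,\dots,\nabla^{j+1}r$, without pinning down slot order, this does not create a gap — the paper glosses over the same point.)
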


\begin{proof}
We will proceed by induction over $k$.
\medskip

\textbf{Case $k=1$.} Denote by $X$ an arbitrary vector field that is tangent to the level sets $M_t$ of the distance function $r$. By definition of gradient and by the chain rule, 
\begin{eqnarray*}
\langle \nabla u, X\rangle_g&=&X(u)\\
&=&(w'\circ r)X(r)=0.
\end{eqnarray*}
Analogously, computing in the direction of the normal to $M_t$ and using the fact that $|\nabla r|^2_g=1$, we get that 
\begin{eqnarray*}
\langle\nabla u, \nabla r\rangle_g&=&\nabla r(u)\\
&=&(w'\circ r)\nabla r(r)\\
&=&(w'\circ r) \langle\nabla r,\nabla r\rangle_g\\
&=&(w'\circ r) \vert \nabla r\vert_g\\
&=&(w'\circ r).
\end{eqnarray*}
 Therefore,
\[
\nabla u= (w'\circ r)\nabla r.
\]

\textbf{Case $k=2$.} Recall that the Levi-Civita connection induces a covariant derivative for higher-order tensors. Given a tensor $T$ of order $k$, the derivative $\nabla T$ is a tensor of order $(k+1)$ given by the formula:
\begin{equation*}
\begin{split}
\nabla T(X_1, \dots, X_k,X_{k+1})=&X_{k+1}(T(X_1, \dots, X_k))-T(\nabla_{X_{k+1}}X_1, \dots, X_k)-...\\
&-T(X_1, \dots, \nabla_{X_{k+1}}X_k).
\end{split}
\end{equation*}
We then compute for any vector fields $X_1, X_2$  on $M$:
\begin{eqnarray*}
\nabla^2 u(X_1, X_2)&=& \nabla \left( (w'\circ r)\nabla r\right)(X_1, X_2)\\
&=&X_2 \left((w'\circ r)\nabla r(X_1) \right)-(w'\circ r)\nabla r\left(\nabla_{X_2}X_1\right)\\
&=& (w'\circ r)\left[X_2(\nabla r(X_1))-\nabla r\left(\nabla_{X_2} X_1\right) \right]+X_2(w'\circ r)\nabla r(X_1)\\
&=&(w'\circ r)\nabla^2 r(X_1, X_2)+X_2(w'\circ r)\nabla r(X_1)\\
& =&(w'\circ r)\nabla^2 r(X_1, X_2)+(w''\circ r)\nabla r(X_1)\nabla r(X_2).
\end{eqnarray*}
Therefore:
\[
\nabla^2u=(w'\circ r)\nabla^2 r+(w''\circ r)\nabla r\otimes \nabla r.
\]

\textbf{Case $k\geq 3$.} Now, suppose that
\[
\nabla^{k-1} u=\sum_{j=0}^{k-1}(w^{(k-1-j)}\circ r)T^{k-1,j+1},
\]
where the tensors $T^{k-1,j+1}r$ satisfy the conditions in the Lemma. Take any $X_1, \dots, X_k$ vector fields on $M$ and compute:
\begin{equation}
\label{eq:induction}
\begin{aligned}
\nabla^ku(X_1,\dots,X_k)& = X_k(\nabla^{k-1}u\left(X_1,\dots,X_{k-1}\right))-\nabla^{k-1}u(\nabla_{X_k}X_1, \dots, X_{k-1})\\
&\quad  -\cdots-\nabla^{k-1}u(X_1, \dots, \nabla_{X_k}X_{k-1})
\end{aligned}
\end{equation}

We substitute the expression for $\nabla^{k-1} u$. For the sake of clarity, let us analyze the first summand:
\begin{align*}
 X_k(\nabla^{k-1}u\left(X_1,\dots,X_{k-1}\right))  &= X_k\left(\sum_{j=0}^{k-1}(w^{(k-1-j)}\circ r)T^{k-1,j+1}(X_1, \dots, X_{k-1})\right) \\
 & = \sum_{j=0}^{k-1} X_k(w^{(k-1-j)}\circ r)T^{k-1,j+1}(X_1, \dots, X_{k-1})  \\
 & \quad +  \sum_{j=0}^{k-1} (w^{(k-1-j)}\circ r) X_k(T^{k-1,j+1}(X_1, \dots, X_{k-1})).
 \end{align*}
 Now,  
\[
X_k(w^{(k-1-j)}\circ r)=(w^{(k-j)}\circ r)X_k(r).
\]
Note that $j=0$ gives the only term with the factor $(w^{(k)}\circ r)$ in the expression for $\nabla^ku$. Observe also that $X_k(T^{k-1,j+1}(X_1, \dots, X_{k-1}))$ is one of the terms in the definition of $\nabla T^{k-1,j+1}(X_1, \dots, X_k)$; the others will be obtained from the remaining terms in \eqref{eq:induction}. If $T^{k-1,j+1}$ is a combination of tensor products of $\nabla r,\nabla^2 r,\dots,\nabla^{j+1}$, the same happens with $\nabla T^{k-1,j+1}$. After a long but straightforward calculation, we have
 \begin{align*}
 \nabla^ku(X_1,\dots,X_k) & =(w^{(k)}\circ r)X_k(r) T^{k-1,1}(X_1, \dots, X_{k-1}) \\
 & \quad + \sum_{j=0}^{k-2}(w^{(k-1-j)}\circ r)T^{k,j+1}(X_1,\dots,X_k),
 \end{align*}
 for some tensors $T^{k,j+1}$. By the inductive hypothesis,
 \begin{align*}
 X_k(r) T^{k-1,1}(X_1, \dots, X_{k-1})& =X_k(r)(\nabla r\otimes\cdots\otimes\nabla r )(X_1, \dots, X_{k-1}) \quad \left( (k-1)\text{ factors}\right) \\
 & = (\nabla r\otimes\cdots\otimes\nabla r )(X_1, \dots, X_k) \quad (k \text{ factors}),
 \end{align*}
 which proves the lemma.
\end{proof}
\medskip

\begin{prop}\label{Proposition:ZeroDerivative}
Let $u=w\circ r$, for a smooth function $w\colon[0, d]\to \mathbb{R}$. Let $x\in M$, and a fixed integer $k\geq 1$ such that $\nabla^l u(x)=0$ for all $1\leq l\leq k$, then $(w^{(l)}\circ r)(x)=0$, for all $1\leq l \leq k$.
\end{prop}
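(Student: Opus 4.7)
The plan is to run an induction on $l$ directly from $1$ to $k$, feeding on the explicit expansion
\[
\nabla^{l} u=\sum_{j=0}^{l-1}(w^{(l-j)}\circ r)\,T^{l,j+1}
\]
established in the preceding lemma, with the key structural fact $T^{l,1}=\nabla r\otimes\cdots\otimes\nabla r$ ($l$ factors). Throughout I will work at a regular point $x\in M\setminus(M_-\cup M_+)$, where $|\nabla r(x)|_g=1$; this is the only case that matters since the tensors $T^{l,j+1}$ are built from covariant derivatives of $r$, which are smooth exactly on the regular set.

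For the base case $l=1$, the lemma gives $\nabla u=(w'\circ r)\,\nabla r$. Evaluating at $x$ and pairing with $\nabla r(x)$ yields
\[
0=\langle \nabla u(x),\nabla r(x)\rangle_g=(w'\circ r)(x)\,|\nabla r(x)|_g^{2}=(w'\circ r)(x),
\]
since $|\nabla r(x)|_g=1$. For the inductive step, fix $l_0\in\{2,\dots,k\}$ and assume $(w^{(l)}\circ r)(x)=0$ for every $1\leq l\leq l_0-1$. In the expansion
\[
\nabla^{l_0}u(x)=\sum_{j=0}^{l_0-1}(w^{(l_0-j)}\circ r)(x)\,T^{l_0,j+1}(x),
\]
every term with $j\geq 1$ involves $(w^{(l_0-j)}\circ r)(x)$ with $1\leq l_0-j\leq l_0-1$, and therefore vanishes by the inductive hypothesis. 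Only the $j=0$ term survives, giving
\[
0=\nabla^{l_0}u(x)=(w^{(l_0)}\circ r)(x)\,\bigl(\nabla r\otimes\cdots\otimes\nabla r\bigr)(x).
\]
Evaluating this $l_0$-tensor on the tuple $(\nabla r(x),\dots,\nabla r(x))$ produces $(w^{(l_0)}\circ r)(x)\cdot|\nabla r(x)|_g^{2l_0}=(w^{(l_0)}\circ r)(x)$, which must therefore be zero. This closes the induction and proves the proposition.

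The only genuine subtlety is the inductive bookkeeping: the tensors $T^{l_0,j+1}$ for $j\geq 1$ need not vanish at $x$, so one must carefully observe that each of them is multiplied by a lower-order scalar factor $(w^{(l_0-j)}\circ r)(x)$ that is already killed by the inductive hypothesis. Once this is noted, the unit-length property $|\nabla r|_g=1$ on the regular set makes the top-order tensor $T^{l_0,1}$ nondegenerate in the direction $\nabla r(x)$, and the extraction of $(w^{(l_0)}\circ r)(x)=0$ is immediate.
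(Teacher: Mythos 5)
Your proof is correct and follows essentially the same approach as the paper's: induction, using the tensor expansion $\nabla^l u=\sum_{j=0}^{l-1}(w^{(l-j)}\circ r)\,T^{l,j+1}$ from the preceding lemma, killing the $j\geq 1$ terms by the inductive hypothesis, and evaluating the surviving term $(w^{(l)}\circ r)(x)\,\nabla r\otimes\cdots\otimes\nabla r$ on $(\nabla r,\dots,\nabla r)$ together with $|\nabla r|_g=1$. Your explicit restriction to regular points $x\notin M_-\cup M_+$ is a harmless clarification (the paper leaves it implicit, and it is the only case used later).
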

\begin{proof}
We will proceed by induction over $k$.  For $k=1$, we have:
\[
\nabla u=(w'\circ r)\nabla r.
\]
Evaluating at $x$ and $\nabla r$,
\[
0=\nabla u(x)(\nabla r)=(w'\circ r)(x)\nabla r(\nabla r)(x)=(w'\circ r)(x).
\]

Now take $k>1$ and suppose that $\nabla^l u(x)=0$, $(w^{(l)}\circ r)(x)=0$ for all $1\leq l< k$ and $\nabla^k u(x)=0$. Using this and the previous lemma,
\[
\nabla^k u=\sum_{j=0}^{k-1}(w^{(k-j)}\circ r)T^{k,j+1}=(w^{(k)}\circ r)T^{k,1}=(w^{(k)}\circ r)(\nabla r\otimes\cdots\otimes\nabla r )
\]
Evaluating at $x$ and $(\nabla r, \dots, \nabla r)$,
\[
0=\nabla^k u(x)(\nabla r, \dots, \nabla r)=(w^{k}\circ r)(x).
\]
The result follows.
\end{proof}
\medskip

\begin{corollary}\label{Corollary:EquivalenceDirichletBoundaryProblem}
Let $(a_0,a_1,\ldots,a_m)\in \mathcal{C}_+^\infty(M)^\Gamma\times(0,\infty)^m$ such that $P_g = \sum_{i=0}^m a_i(-\Delta_g)^{i}$ and define the operator
\[
\widehat{\mathcal{L}} = \alpha_0 + \sum_{i=1}^m a_i(- \mathcal{L})^{i},
\]
where $a_0=\alpha_0\circ r.$
If $u=w\circ r\in C^{2m}(\Omega_{a,b})$ is a solution to \eqref{Problem:DirichletBoundaryBigDimension}, then $w$ is a solution to the problem
\begin{equation}\label{Problem:DirichletBoundaryOneDimension}
\begin{cases}
\widehat{\mathcal{L}}w = \vert w\vert^{p-2}w, & \text{ in }(a,b),\\
w^{(k)}(a)=w^{(k)}(b)=0, &  k=0,\ldots, 2m-1. 
\end{cases}
\end{equation}
\end{corollary}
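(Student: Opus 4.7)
The plan is to reduce the statement to two essentially independent checks: (i) that the PDE $P_g u = |u|^{p-2}u$ transforms into the ODE $\widehat{\mathcal{L}} w = |w|^{p-2}w$ on $(a,b)$, and (ii) that the boundary conditions on $\partial\Omega_{a,b}$ translate into the vanishing of the first $2m{-}1$ derivatives of $w$ at $a$ and $b$. Both parts will be handled by ingredients already established in the excerpt: identity \eqref{Eq:IsoparametricLaplacian} for (i) and Proposition \ref{Proposition:ZeroDerivative} for (ii).

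For (i), the key observation is that on a $\Gamma$-invariant function $u=w\circ r$, every iterated Laplacian decouples as $\Delta_g^{i}(w\circ r) = (\mathcal{L}^i w)\circ r$, which is precisely \eqref{Eq:IsoparametricLaplacian}. Since $P_g=\sum_{i=0}^m a_i(-\Delta_g)^i$ with $a_0=\alpha_0\circ r$ acting as multiplication, one gets pointwise on $\Omega_{a,b}$
\[
P_g u = (\alpha_0\circ r)(w\circ r) + \sum_{i=1}^m a_i ((-\mathcal{L})^i w)\circ r = (\widehat{\mathcal{L}}w)\circ r.
\]
Since also $|u|^{p-2}u = (|w|^{p-2}w)\circ r$, the PDE becomes $\bigl((\widehat{\mathcal{L}}w - |w|^{p-2}w)\bigr)\circ r = 0$ on $\Omega_{a,b}$. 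Because $r\colon \Omega_{a,b}\to (a,b)$ is surjective, this yields $\widehat{\mathcal{L}}w = |w|^{p-2}w$ on $(a,b)$.

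For (ii), note that $\partial\Omega_{a,b}=r^{-1}(a)\cup r^{-1}(b)$ consists of principal orbits (since $a,b\in(0,d)$, so we stay away from the singular orbits where the analysis would be more delicate). The condition $u=0$ on $\partial\Omega_{a,b}$ immediately gives $w(a)=w(b)=0$, since $u = w\circ r$ and $r$ attains the values $a$ and $b$ on these hypersurfaces. For the higher-order conditions, fix $x_0\in r^{-1}(a)$; the hypothesis $\nabla^k u(x_0)=0$ for $k=1,\ldots,2m-1$ is exactly the assumption of Proposition \ref{Proposition:ZeroDerivative}, and applying it yields $w^{(k)}(a)=(w^{(k)}\circ r)(x_0)=0$ for $k=1,\ldots,2m-1$. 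The same argument at a point of $r^{-1}(b)$ gives the corresponding vanishing at $b$.

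The only mild subtlety I anticipate is bookkeeping the regularity at the boundary: we need $w\in C^{2m}[a,b]$ so that the one-sided derivatives $w^{(k)}(a)$ and $w^{(k)}(b)$ make sense and the Proposition applies. This is immediate from $u\in C^{2m}(\overline{\Omega}_{a,b})$ (assumed implicitly in the statement $u\in C^{2m}(\Omega_{a,b})$ satisfying pointwise boundary conditions) together with the isomorphism $w=\iota(u)$, so no genuine obstacle arises. Hence \eqref{Problem:DirichletBoundaryOneDimension} is obtained.
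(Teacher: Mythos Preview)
Your proof is correct and follows essentially the same approach as the paper: you use identity \eqref{Eq:IsoparametricLaplacian} to reduce the PDE to the ODE and Proposition \ref{Proposition:ZeroDerivative} to translate the boundary conditions. Your version is slightly more detailed (explicitly invoking surjectivity of $r$ and commenting on regularity), but the argument is the same.
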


\begin{proof}
Let $u=w\circ r$ be a smooth $\Gamma$-invariant solution to \eqref{Problem:DirichletBoundaryBigDimension}. From \eqref{Eq:IsoparametricLaplacian} it follows that
\[
P_g u = \sum_{i=0}^m a_i (-\Delta_g)^{i}u = \sum_{i=0}^{m}a_i\left( (-\mathcal{L})^{i}w \right)\circ r = \widehat{\mathcal{L}}(w)\circ r
\]
and $w$ satisfies $\widehat{\mathcal{L}}w=\vert w\vert^{p-2}w$ in $(a,b)$. Moreover, as $\nabla^ku(x)=0$ for every $0\leq k\leq m-1$ and every $x\in\partial\Omega_{a,b}$, by Proposition \ref{Proposition:ZeroDerivative},
\[
0 = w^{(k)}\circ r(x) =
\begin{cases}
w^{(k)}(a), & \text{if }x\in r^{-1}(a)\\
w^{(k)}(b), & \text{if }x\in r^{-1}(b),
\end{cases}
\]
and $w$ is a (strong) solution to \eqref{Problem:DirichletBoundaryOneDimension}.
\end{proof}


\section{Segregation and optimal partitions}\label{Section:Segregation}

In this section, we will suppose that $(M,g)$ is such that the operator $P_g$ can be written in the form \eqref{Eq:OperatorSumLaplacians} and that $\Gamma$ satisfies (\hyperref[Gamma:Cohomogeneity]{$\Gamma1$}) and (\hyperref[Gamma:DimensionOrbits]{$\Gamma2$}), so that the results of the previous section hold true. Remember that this is possible, for example, if $(M,g)$ is an Einstein manifold with positive scalar curvature or when $R_g>0$ in case $m=1$. See Section \ref{Section:Examples} for concrete examples.
\newline

Recall that for a compact Lie group $\Gamma$, a {\em principal $\Gamma$-bundle} is a fiber bundle $\Gamma\to P\to B$, whose structure group is $\Gamma$, together with a $\Gamma$-action on $\Gamma$ itself by left translations, and a free right $\Gamma$-action on $P$, whose orbits are the fibers of the bundle. Let $F$ be another smooth manifold that admits a left action by $\Gamma$. The orbit space of the diagonal action on $P\times F$ is a smooth manifold denoted by $P\times_{\Gamma} F$, given as the total space of the fiber bundle
    \[
    F\to P\times_{\Gamma} F\to B.
    \]
    In the literature, the latter is known as the {\em associated bundle} to the principal bundle $\Gamma\to P\to B$, and $P\times_{\Gamma} F$ is called the {\em twisted space}. See \cite[Section 3.1]{AlexBettiol} for definitions and a detailed explanation.
\newline

Let $\Omega$ be a $\Gamma$-invariant open subset of $M$ with smooth boundary and recall the definitions of the energy functional $J_\Omega$ and the Hilbert manifold $\mathcal{M}_\Omega^\Gamma$ given in Section \ref{Sec:Variational Setting}. By Proposition \ref{Proposition:ExistenceLeastEnergySolution}, problem \eqref{eq:dirichlet} admits a least energy $\Gamma$-invariant solution. So the quantity $c_\o^\Gamma$ defined in the introduction is attained.

Theorem \ref{Theorem:OptimalPartitionSymmetry} will follow from the next segregation result.

\begin{theorem}\label{Theorem:OptimalPartition} Suppose $\Gamma$ satisfies conditions \emph{(\hyperref[Gamma:Cohomogeneity]{$\Gamma1$})}, \emph{(\hyperref[Gamma:DimensionOrbits]{$\Gamma2$})} and \emph{(\hyperref[Gamma:MetricDecomposition]{$\Gamma3$})}, and that $P_g$ can be written as a sum of Laplacians of the form \eqref{Eq:OperatorSumLaplacians}. 
	For $i=1,\ldots,\ell$, fix $\nu_i=1$  and for each $i\neq j$, $k\in\n$, let $\eta_{ij,k}<0$ be such that $\eta_{ij,k}=\eta_{ji,k}$ and $\eta_{ij,k}\to -\infty$ as $k\to\infty$. Let $(u_{k,1},\ldots,u_{k,\ell})$ be a least energy fully nontrivial solution to the system \eqref{Eq:Q Systems} with $\eta_{ij}=\eta_{ij,k}$. Then, there exists $u_{\infty,1},\ldots u_{\infty,\ell}\in H_g^m(M)^\Gamma$ such that, up to a subsequence, 
\begin{itemize}
	\item[$(a)$]$u_{k,i}\to u_{\infty,i}$ strongly in $H^m_g(M)$,  $u_{\infty,i}\in\cC^{m-1}(M)$, $u_{\infty,i}\neq 0$.  Let 
	\begin{align*}
		\Omega_i:=\operatorname{int}\overline{\{x\in\rn:u_{\infty,i}(x)\neq 0\}}\qquad \text{ for \ }i=1,\ldots,\ell.
	\end{align*}
	Then  $u_{\infty,i}\in H_{0,g}^m(\Omega)^\Gamma$ is a least  energy solution of \eqref{eq:dirichlet} in $\Omega_i$ for each $i=1,\ldots,\ell$.
	\item[$(b)$]$\{\Omega_1,\ldots,\Omega_\ell\}\in\cP_\ell^\Gamma$ is a solution to the $\Gamma$-invariant $\ell$--optimal partition problem \eqref{Problem:PartitionProblem}  satisfying the following properties:
	 \begin{enumerate}
    \item $\Omega_i$ is smooth and connected for every $i=1,\ldots, \ell$, $\overline{\Omega}_i\cap\overline{\Omega}_{i+1}\neq\emptyset$, $\Omega_i\cap\Omega_j=\emptyset$ if $\vert i-j \vert\geq 2$ and $\overline{\Omega_1\cup\ldots\cup\Omega_\ell} = M$;
    \item 
    \[
    \Omega_1\approx G\times_{K-}D_{-},\quad \Omega_\ell\approx G\times_{K+}D_{+},\quad \partial \Omega_1\approx\partial\Omega_\ell\approx \Gamma/K;
    \]
    \item For each $i\neq 1,\ell$,
    \[
    \Omega_i\approx \Gamma/K\times(0,1),\quad\overline{\Omega}_i\cap\overline{\Omega}_{i+1}\approx\Gamma/K,\quad \text{and}\quad \partial\Omega_i\approx \Gamma/K \sqcup \Gamma/K,
    \]
    where $G\times_{K\pm}D_{\pm}$ denote disk bundles at the singular orbits. \end{enumerate}
	\end{itemize} 
\end{theorem}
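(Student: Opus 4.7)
The approach is to analyze the $\eta_{ij,k}\to-\infty$ segregation limit of the least energy $\Gamma$-invariant solutions $\overline{u}_k$ of \eqref{Eq:Q Systems} supplied by Theorem \ref{Th:MainQSystems}, and then use the one-dimensional reduction of Section \ref{Section:OneDimensionalReduction} to read off the explicit geometry. I begin by bounding $\|\overline{u}_k\|$ uniformly in $k$. By $(\Gamma 2)$ and $\Gamma$-invariant partitions of unity there exist positive smooth $\Gamma$-invariant $\phi_1,\ldots,\phi_\ell$ with pairwise disjoint supports; scaling them into $\cN$---possible since the coupling terms vanish on disjoint supports---gives a $k$-independent upper bound on $\inf_\cN\cJ$, hence on $\|\overline{u}_k\|^2$ via \eqref{eq:energy_nehari}. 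Along a subsequence $u_{k,i}\rightharpoonup u_{\infty,i}$ weakly in $H^m_g(M)^\Gamma$; by Lemma \ref{Lemma:Sobolev} and $(\Gamma 2)$ the embedding into $L^{2_m^*}_g(M)$ is compact, so $u_{k,i}\to u_{\infty,i}$ strongly in $L^{2_m^*}$. Testing the $i$-th equation of \eqref{Eq:Q Systems} with $u_{k,i}-u_{\infty,i}$ and using coercivity of $P_g$ upgrades this to strong $H^m$-convergence, while the uniform lower bound $\|u_{k,i}\|_{P_g}\geq d_0>0$ on elements of $\cN$ persists in the limit, so $u_{\infty,i}\neq 0$.

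\textbf{Segregation and one-dimensional reduction.} Summing the identities $\partial_i\cJ(\overline{u}_k)u_{k,i}=0$ over $i$ produces
\[
\sum_{i\neq j}(-\eta_{ij,k})\beta_{ij}\int_M|u_{k,j}|^{\alpha_{ij}}|u_{k,i}|^{\beta_{ij}}\,dV_g\leq C,
\]
so the coupling integrals tend to zero as $k\to\infty$ and the limits $u_{\infty,i}$ have pairwise disjoint supports. By Corollary \ref{Corollary:Regularity} and the isomorphism $\iota$ of Section \ref{Section:OneDimensionalReduction}, one writes $u_{\infty,i}=w_{\infty,i}\circ r$ with $w_{\infty,i}$ continuous on $(0,d)$, and the disjointness transfers to $[0,d]$. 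A gap-filling minimality argument---any gap in $\bigcup_i\mathrm{supp}(w_{\infty,i})$ could be filled by a rescaled least energy solution on the tube $r^{-1}(a',a'')$ furnished by Proposition \ref{Proposition:ExistenceLeastEnergySolution}, producing an admissible competitor in $\cN$ with strictly smaller system energy than $\overline{u}_k$ for large $k$---together with nontriviality of each $u_{\infty,i}$ forces, after relabeling along $[0,d]$, $\{w_{\infty,i}\neq 0\}=(a_{i-1},a_i)$ for some $0=a_0<a_1<\cdots<a_\ell=d$.

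\textbf{Geometric identification and optimality.} Setting $\Omega_1:=r^{-1}[0,a_1)$, $\Omega_\ell:=r^{-1}(a_{\ell-1},d]$ and $\Omega_i:=r^{-1}(a_{i-1},a_i)$ for $1<i<\ell$, hypotheses $(\Gamma 1)$ and $(\Gamma 3)$ together with the standard slice theorem for cohomogeneity one actions identify $\Omega_1,\Omega_\ell$ with the normal disk bundles $\Gamma\times_{K_\pm}D_\pm$ over $M_\pm$, while Lemma \ref{Lemma:MetricVolumeDecomposition} identifies each interior $\Omega_i$ with $\Gamma/K\times(0,1)$; the boundary and intersection claims follow from $\partial\Omega_i=r^{-1}(\{a_{i-1},a_i\})$ together with $r^{-1}(t)\approx\Gamma/K$ for $t\in(0,d)$. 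Passing to the limit in the $i$-th equation shows $u_{\infty,i}\in H^m_{0,g}(\Omega_i)^\Gamma$ solves \eqref{eq:dirichlet} in $\Omega_i$. For partition optimality, given any $(\Phi_1,\ldots,\Phi_\ell)\in\cP_\ell^\Gamma$ with $\Gamma$-invariant least energy solutions $v_i\in H^m_{0,g}(\Phi_i)^\Gamma$ extended by zero, the vector $\overline{v}$ is admissible in $\cN$ with $\eta_{ij,k}$-independent system energy $\sum_i c^\Gamma_{\Phi_i}$; minimality of $\overline{u}_k$ followed by $k\to\infty$ gives $\sum_i J_{\Omega_i}(u_{\infty,i})\leq\sum_i c^\Gamma_{\Phi_i}$, and specializing to $\Phi_i=\Omega_i$ forces equality, establishing both the least energy statement in (a) and the optimal partition property in (b).

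\textbf{Main obstacle.} The most delicate step is securing the interval structure in the segregation step: ruling out both gaps in $\bigcup_i(a_{i-1},a_i)$ and collapses merging two indices into one interval. The gap-filling argument requires constructing a competitor in $\cN$ by inserting a rescaled least energy solution with support disjoint from all the $u_{k,i}$, uniformly in $k$; this is possible precisely because the cohomogeneity one structure guarantees that $r^{-1}$ of any subinterval of $(0,d)$ is a smooth $\Gamma$-invariant domain where Proposition \ref{Proposition:ExistenceLeastEnergySolution} applies, and because segregation forces the coupling terms between the inserted function and the $u_{k,i}$ to vanish in the limit. Once the interval structure is secured, the remaining assertions reduce to standard slice theory and routine passage to the limit in the PDE.
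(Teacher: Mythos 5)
Your overall strategy mirrors the paper's: extract a weak limit of the least-energy system solutions along $\eta_{ij,k}\to-\infty$, show segregation via Fatou, identify nodal regions via the one-dimensional reduction and the cohomogeneity one structure, and prove optimality by comparison with arbitrary partitions in $\mathcal{P}_\ell^\Gamma$. Several points, however, are either different from the paper or leave genuine gaps.

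The central missing ingredient is the \emph{unique continuation principle} (Lemma \ref{Lemma:UniqueContinuation}), which the paper uses in Proposition \ref{prop: solOPchone} to prove the \emph{strict} monotonicity $c_V^\Gamma<\min\{c_{V_1}^\Gamma,c_{V_2}^\Gamma\}$ for nested tubes $V_1,V_2\subset V$: if the infimum on a smaller tube coincided with that on the larger one, the zero-extended minimizer would be a $C^{2m}$ least-energy solution on $V$ that vanishes on a subtube, forcing it to vanish identically via the ODE uniqueness after the one-dimensional reduction. Your ``gap-filling'' step asserts that a competitor can be constructed ``with strictly smaller system energy than $\overline{u}_k$ for large $k$,'' but this strictness is precisely what requires unique continuation: enlarging a domain gives $c_{\Omega'}^\Gamma\leq c_{\Omega}^\Gamma$ automatically, yet without the strict inequality you cannot rule out an optimal partition that leaves a gap, and you cannot conclude that the complement of $\bigcup_i r(\Omega_i)$ consists of exactly $\ell-1$ points. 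Moreover, as written, your competitor construction is ambiguous: $\cN$ has exactly $\ell$ slots, so ``inserting'' a rescaled solution on $r^{-1}(a',a'')$ must either replace one of the $u_{k,i}$ or enlarge one of the supports, and neither operation is justified to produce a member of $\cN_k$ with smaller $\cJ_k$-value without further argument.

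A second, smaller issue concerns strong $H^m$-convergence. You propose testing the $i$-th equation with $u_{k,i}-u_{\infty,i}$; but the coupling terms contain factors $\eta_{ij,k}\beta_{ij}\int|u_{k,j}|^{\alpha_{ij}}|u_{k,i}|^{\beta_{ij}-2}u_{k,i}(u_{k,i}-u_{\infty,i})$, and the a priori bound one gets from the Nehari identity only shows that $\int|u_{k,j}|^{\alpha_{ij}}|u_{k,i}|^{\beta_{ij}}\leq C/(-\eta_{ij,k})$; after multiplying by $\eta_{ij,k}$, these terms are merely bounded, not $o(1)$. The paper avoids this by normalizing $t_iu_{\infty,i}\in\mathcal{N}_0$ and sandwiching $c_0^\Gamma\leq\frac{m}{N}\sum\|t_iu_{\infty,i}\|^2_{P_g}\leq\cdots\leq c_0^\Gamma$ to force $t_i=1$ and norm convergence simultaneously, which is both cleaner and correct. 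Finally, to place $u_{\infty,i}\in H^m_{0,g}(\Omega_i)^\Gamma$ the paper invokes the characterization $H^m_{0,g}(\Omega_i)=\{u\in H^m_g(M):u=0\text{ a.e.\ in }M\setminus\Omega_i\}$ (which relies on the smoothness of $\partial\Omega_i$); you use this implicitly without flagging it.
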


To prove this theorem we will need the following lemma, which is a version of the unique continuation principle that is suitable to our situation.

\begin{lemma}\label{Lemma:UniqueContinuation}
Let $a,b\in(0,d)$ and let $u\in C^{2m}(\Omega_{a,b})$ be a $\Gamma$-invariant solution to the Dirichlet boundary problem \eqref{Problem:DirichletBoundaryBigDimension} in $\Omega_{a,b}:=r^{-1}(a,b)$. If $u=0$ in any subdomain of the form $\Omega_{c,d}:=r^{-1}(c,d)$, $c,d\in[a,b]$, then $u=0$ in the whole interval $[a,b]$.
\end{lemma}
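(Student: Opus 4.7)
The plan is to reduce the lemma to the standard uniqueness theorem for initial value problems for ordinary differential equations, exploiting the one-dimensional reduction developed in the previous section. Since $\Omega_{a,b}$ is contained in the regular part $M\setminus(M_-\cup M_+)$, the $\Gamma$-invariance of $u$ and the discussion around \eqref{Eq:IsomorphismSmoothFunctionSpace} give a unique $w\in \mathcal{C}^{2m}([a,b])$ with $u=w\circ r$, and by Corollary \ref{Corollary:EquivalenceDirichletBoundaryProblem}, $w$ satisfies
\[
\widehat{\mathcal{L}}w = |w|^{p-2}w \text{ in } (a,b),\qquad w^{(k)}(a)=w^{(k)}(b)=0,\; k=0,\ldots,2m-1,
\]
where $\widehat{\mathcal{L}} = \alpha_0 + \sum_{i=1}^m a_i(-\mathcal{L})^i$ and $\mathcal{L} = \partial_t^2 + h(t)\partial_t$. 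Writing $c'$ in place of the second endpoint (to avoid a clash with the fixed symbol $d=\mathrm{dist}_g(M_-,M_+)$), the hypothesis $u\equiv 0$ on $\Omega_{c,c'}=r^{-1}(c,c')$ becomes $w\equiv 0$ on the open subinterval $(c,c')\subset [a,b]$.

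The next step is to recast the ODE as an autonomous-in-state first-order system. Expanding $(-\mathcal{L})^i$ shows that $\widehat{\mathcal{L}}w$ has leading term $(-1)^m a_m\, w^{(2m)}$ with $a_m>0$, so we may solve for the top derivative and obtain
\[
w^{(2m)} = F\bigl(t, w, w', \ldots, w^{(2m-1)}\bigr),
\]
where $F$ depends linearly on $w', w'', \ldots, w^{(2m-1)}$ with coefficients built from $a_1,\dots,a_{m-1}$ and from $h(t)$ and its derivatives, and on $w$ via the bounded smooth coefficient $\alpha_0(t)$ together with the nonlinear term $|w|^{p-2}w$. Crucially, because $[a,b]\subset(0,d)$ is compactly contained in the open interval, $h$ and its derivatives are smooth and bounded on $[a,b]$; and the map $s\mapsto |s|^{p-2}s$ is locally Lipschitz on $\mathbb{R}$ for $p>2$, thanks to the pointwise bound $\bigl||s|^{p-2}s-|\tau|^{p-2}\tau\bigr|\leq(p-1)\max(|s|,|\tau|)^{p-2}|s-\tau|$. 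Setting $Y:=(w,w',\ldots,w^{(2m-1)})\in\mathbb{R}^{2m}$, this produces a first-order system $Y'=G(t,Y)$ whose right-hand side is continuous in $t\in[a,b]$ and locally Lipschitz in $Y$, uniformly on compact subsets of $\mathbb{R}^{2m}$.

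Now the conclusion is immediate. Pick any $t_0\in(c,c')$. Because $w$ vanishes identically on the open interval $(c,c')$, all derivatives $w^{(k)}(t_0)$, $k=0,\ldots,2m-1$, vanish, so $Y(t_0)=0$. The zero function $Y\equiv 0$ is a solution of the Cauchy problem for the system $Y'=G(t,Y)$ with data $Y(t_0)=0$; by the Picard--Lindelöf theorem it is the only solution on the maximal interval of existence, which contains the entire compact interval $[a,b]$. Hence $w\equiv 0$ on $[a,b]$, and therefore $u=w\circ r\equiv 0$ on $\Omega_{a,b}$.

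The single place that requires care is the verification that the reduced first-order system has a Lipschitz right-hand side despite the nonsmoothness of $s\mapsto|s|^{p-2}s$ at the origin; the derivative $(p-1)|s|^{p-2}$ is continuous but fails to be Lipschitz at $0$ when $2<p<3$, yet the function $|s|^{p-2}s$ itself is locally Lipschitz for every $p\ge 2$, which is all that Picard--Lindelöf needs. Everything else is mechanical, since the cohomogeneity-one geometry has already been encoded in Corollary \ref{Corollary:EquivalenceDirichletBoundaryProblem} and in the smoothness of the mean curvature $h$ on compact subintervals of $(0,d)$.
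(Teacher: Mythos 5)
Your proposal is correct and takes essentially the same route as the paper: reduce to the one-dimensional ODE via Corollary \ref{Corollary:EquivalenceDirichletBoundaryProblem}, observe that all derivatives of $w$ up to order $2m-1$ vanish at a point, and invoke uniqueness for the first-order initial value problem. The only difference is stylistic — the paper argues by contradiction at a boundary point of the vanishing set using local uniqueness, whereas you propagate the zero solution from an interior point of the vanishing interval by global uniqueness, and you spell out the local Lipschitz continuity of $s\mapsto|s|^{p-2}s$ for $p>2$, which the paper leaves implicit.
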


\begin{proof}
As $u$ is a strong $\Gamma$-invariant solution to \eqref{Problem:DirichletBoundaryBigDimension}, $u= w\circ r$ for some $w:[0,d]\rightarrow\mathbb{R}$ and Corollary \ref{Corollary:EquivalenceDirichletBoundaryProblem}, $w\in C^{2m}[a,b]$ is a strong solution to \ref{Problem:DirichletBoundaryOneDimension}. Since $u=0$ in $\Omega_{c,d}$ and $r\neq 0$ in $M\smallsetminus M_{-}=r^{-1}(0,d]$, then necessarily $w(t)=0$ for any $t\in[c,d]$. 

Now, if there is no $t_1\in [a,c]\cup[d,b]$ such that $w(t)\neq0$, then $w\equiv0$ in $[a,b]$ and there is nothing to prove. If this is not the case, there must be $t_0\in(a,c)\cup(d,b)$ such that $w(t_0)\neq 0$. Without loss of generality, suppose that $t_0\in(a,c)$; therefore, there must exist $a<t_1<t_2\leq c$ such that $w\neq 0$ in $(t_1,t_2)$ and $w=0$ in $[t_2,c]$. As $w$ is of class $C^{2m}$, all its derivatives of lower order are continuous and it follows that $w^{(k)}(t_2)=0$ for every $k=0,1,\ldots, 2m-1$. By existence and uniqueness of the initial value problem
\[
\widehat{\mathcal{L}}w = \vert w\vert^{p-2}w,\qquad w^{(k)}(t_2)=0, k=0,1,\ldots,2m-1
\]
$w$ vanishes identically in a small neighborhood of $t_2$, contradicting that $w\neq0$ in $(t_1,t_2)$. Something similar holds true if $w(t_0)\neq 0$ for some $t_0\in [d,b)$. Hence $w=0$ in $[a,b]$, as we wanted to show.
\end{proof}

The following topological lemma will be useful in what follows.

\begin{lemma}\label{Lemma:PreimageConnectedSets}
Let $X, Y$ be two topological spaces and  $r:X\rightarrow Y$ be a quotient map. If $r^{-1}(y)$ is connected for every $y\in Y$, then $r^{-1}(B)$ is connected for every connected subset $B\subset Y$.
\end{lemma}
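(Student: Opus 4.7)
The strategy is by contraposition. Assume $r^{-1}(B) = A_1 \sqcup A_2$ is a nontrivial separation with $A_1, A_2$ disjoint, nonempty, and relatively clopen in $r^{-1}(B)$. I will show this forces $B$ to admit a nontrivial clopen decomposition, contradicting the connectedness hypothesis.

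\emph{Step 1 (saturation by connected fibers).} For every $y \in B$ the fiber $r^{-1}(y)$ is connected and contained in $A_1 \sqcup A_2$, so it lies entirely in exactly one of $A_1$ or $A_2$. Writing $B_i := r(A_i)$ for $i = 1, 2$, we obtain $A_i = r^{-1}(B_i)$, so each $A_i$ is saturated under $r$, and $B = B_1 \sqcup B_2$ is a disjoint decomposition into nonempty subsets.

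\emph{Step 2 (descent via the quotient topology).} I will show $B_1$ is clopen in the subspace topology on $B$. Choose $U \subseteq X$ open with $A_1 = U \cap r^{-1}(B)$, and $F \subseteq X$ closed with $A_1 = F \cap r^{-1}(B)$, and consider the saturations
\[
\widetilde U := r^{-1}(r(U)), \qquad \widetilde F := r^{-1}(r(F)).
\]
These are saturated subsets of $X$. Using that $A_1$ is saturated (Step 1), one verifies $\widetilde U \cap r^{-1}(B) = A_1 = \widetilde F \cap r^{-1}(B)$, from which $r(U) \cap B = B_1 = r(F) \cap B$. Granting that $\widetilde U$ is open and $\widetilde F$ is closed in $X$, the defining property of the quotient topology — a subset $V \subseteq Y$ is open (resp.\ closed) if and only if $r^{-1}(V)$ is open (resp.\ closed) in $X$ — yields that $r(U)$ is open and $r(F)$ is closed in $Y$. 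Intersecting with $B$ shows $B_1$ is clopen in $B$.

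\emph{Step 3 (contradiction).} Since $B_1$ and $B_2 = B \setminus B_1$ are disjoint and nonempty, and $B_1$ is clopen in $B$, this contradicts the connectedness of $B$. Hence $r^{-1}(B)$ is connected.

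The main obstacle is the assertion in Step 2 that $\widetilde U$ is open and $\widetilde F$ is closed in $X$: for a completely general quotient map this is precisely the point where additional structure must be invoked, the upshot being that saturated preimages must be transferable between $X$ and $Y$ by the quotient characterization. In the setting where this lemma is applied in the paper, $r : M \to [0, d]$ is the orbit projection of a smooth cohomogeneity-one action, which is both open (by the slice theorem) and closed (as a continuous surjection from a compact manifold onto a Hausdorff space); each of these properties yields the required $\widetilde U$ open and $\widetilde F$ closed automatically, and Step 2 goes through without further work.
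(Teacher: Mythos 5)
Your argument is the paper's argument in slightly different clothing: the paper factors an arbitrary continuous $\mathbb{Z}_2$-valued function on $r^{-1}(B)$ through $r$, while you manipulate the clopen pieces $A_1,A_2$ directly; both hinge on exactly the same descent step, namely that a saturated clopen decomposition of $r^{-1}(B)$ pushes forward to a clopen decomposition of $B$.

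The gap you honestly flag at the end—that $\widetilde U$ being open and $\widetilde F$ being closed, equivalently that $r|_{r^{-1}(B)}\colon r^{-1}(B)\to B$ is again a quotient map, is \emph{not} automatic for a general quotient map $r$ and a general subset $B$—is in fact already present in the paper's own proof: the sentence ``therefore $f$ induces a continuous function $\hat f\colon B\to\mathbb{Z}_2$'' asserts precisely this restricted quotient property with no justification, so on this point you are being more scrupulous than the source. As you observe, in the paper's application the gap is harmless: $r\colon M\to[0,d]$ is closed (a continuous map from a compact space to a Hausdorff one) and also open (an orbit projection), and for a closed or open map the restriction to any saturated subspace is again a quotient map. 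An alternative way to see that nothing is lost in the paper is that the lemma is only ever invoked for $B$ an open subinterval of $[0,d]$ or a singleton, i.e.\ $B$ open or closed in $[0,d]$, and restrictions of quotient maps to saturated open or closed subspaces are always quotient maps.
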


\begin{proof}
Let $B\subset Y$ be connected and consider $A:=r^{-1}(B)$. Let $f:A\rightarrow\mathbb{Z}_2:=\{-1,1\}$ be any continuous function. Since $r^{-1}(y)$ is connected for every $y\in B$, then for any $x_1,x_2\in X$, $r(x_1)=r(x_2)$ implies that $f(x_1)=f(x_2)$, for $f$ maps connected sets into connected sets. Therefore $f$ induces a continuous function $\hat{f}:B\rightarrow\mathbb{Z}_2$ such that $\hat{f}\circ r = f$. As $B$ is connected, $\hat{f}$ is constant and so is $f$. As $f:A\rightarrow\mathbb{Z}_2$ was an arbitrary continuous function, it follows that $A$ must be connected. \end{proof}

The following result allow us to describe the nodal domains in terms of the orbit structure. 
\newline

\begin{prop}
\label{prop: solOPchone}
Given a solution $\{\Theta_1, \dots, \Theta_\ell\}\in \mathcal{P}_\ell^\Gamma$ to the optimal $\Gamma$-invariant $\ell$-partition problem \eqref{Problem:PartitionProblem},  there exist points $a_1, \dots, a_{\ell-1}\in (0, d)$ such that:
\[
(0, d)\setminus \bigcup_{i=1}^{\ell}r(\Theta_i)=\{a_1, \dots, a_{\ell-1}\}.
\]
and, up to a relabeling,
\begin{eqnarray*}
\Omega_1:=\Theta_1\cup M_-&=&r^{-1}[0, a_1)\approx G\times_{K-}D_{-} \\
\Omega_i:=\Theta_i&=&r^{-1}(a_{i-1}, a_{i})\approx M_{d/2}\times(0,1) \quad \quad \mbox{if $i=2, \dots, \ell-1$}\\
\Omega_\ell:=\Theta_\ell\cup M_+&=&r^{-1}(a_{\ell-1}, d]\approx G\times_{K+}D_{+}
\end{eqnarray*}
Moreover, the sets $\Omega_1, \dots, \Omega_\ell$ satisfy properties (b.1) to (b.3) of Theorem \eqref{Theorem:OptimalPartition} and $\{\Omega_1,\ldots,\Omega_\ell\}$ is also a solution to the $\Gamma$-invariant $\ell$-optimal partition problem \eqref{Problem:PartitionProblem}. 
\end{prop}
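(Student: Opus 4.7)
The plan is to reduce the partition problem to a one-dimensional problem on the orbit space $[0,d]$ via the distance function $r$, and then to exploit a strict monotonicity property of $c_\Omega^\Gamma$ under set inclusion to force each $\Theta_i$ to be the preimage under $r$ of a single open subinterval of $[0,d]$, with these subintervals covering $[0,d]$ up to a finite set of points.

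First, since each $\Theta_i$ is open and $\Gamma$-invariant and the principal orbits are precisely the level sets $r^{-1}(t)$ for $t\in(0,d)$, I would observe that $\Theta_i = r^{-1}(E_i)$ for a uniquely determined open subset $E_i\subset[0,d]$, and that the $E_i$'s are mutually disjoint. By Proposition~\ref{Proposition:ExistenceLeastEnergySolution} each $c_{\Theta_i}^\Gamma$ is attained by a $\Gamma$-invariant least-energy solution $u_i = w_i\circ r$, where $w_i$ solves the reduced one-dimensional Dirichlet problem \eqref{Problem:DirichletBoundaryOneDimension} on $E_i$ by Corollary~\ref{Corollary:EquivalenceDirichletBoundaryProblem}.

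The key technical step, which I expect to be the main obstacle, is strict monotonicity: if $\Omega\subsetneq\Omega'$ are smooth connected $\Gamma$-invariant open subsets of $M$, then $c_{\Omega'}^\Gamma < c_\Omega^\Gamma$. The nonstrict direction is immediate from the zero-extension embedding $H_{0,g}^m(\Omega)^\Gamma\hookrightarrow H_{0,g}^m(\Omega')^\Gamma$. For strictness, if equality held then the zero-extension of a minimizer on $\Omega$ would realize $c_{\Omega'}^\Gamma$, making it a $\Gamma$-invariant classical solution on $\Omega'$; writing it as $w\circ r$ and observing that $w$ vanishes on the nonempty open subinterval of $r(\Omega')$ corresponding to $\Omega'\setminus\overline{\Omega}$, Lemma~\ref{Lemma:UniqueContinuation} would force $w\equiv 0$, a contradiction. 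The delicate point here is marrying the low regularity of the Sobolev minimizer with a sharp ODE unique-continuation argument applied after the one-dimensional reduction.

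With strict monotonicity available, two short optimization arguments complete the structural picture. Each $E_i$ must be a single open interval: if it decomposed as $I\sqcup I'$, the minimum on a disjoint union concentrates on one component (say $I$), so replacing $\Theta_i$ by $r^{-1}(I)$ would not change $c_{\Theta_i}^\Gamma$, while adjoining $r^{-1}(I')$ to an adjacent $\Theta_j$ would strictly decrease $c_{\Theta_j}^\Gamma$, contradicting optimality. Similarly, $[0,d]\setminus\bigcup_i E_i$ cannot contain any nondegenerate subinterval, else that subinterval could be absorbed into a neighboring piece to strictly decrease its $c$-value. Hence the complement is a finite set, and with $\ell$ disjoint open intervals the only possibility, after relabeling in increasing order, is $E_1=[0,a_1)$, $E_i=(a_{i-1},a_i)$ for $2\le i\le\ell-1$, and $E_\ell=(a_{\ell-1},d]$, with exactly $\ell-1$ separating points $0<a_1<\cdots<a_{\ell-1}<d$.

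Finally, the diffeomorphism types follow from the tubular-neighborhood theorem for cohomogeneity-one actions: the collar $r^{-1}[0,a_1)$ of the singular orbit $M_-$ is $\Gamma$-equivariantly diffeomorphic to the associated normal disk bundle $\Gamma\times_{K_-}D_-$, analogously for $r^{-1}(a_{\ell-1},d]$ and $M_+$, while each interior tube $r^{-1}(a_{i-1},a_i)$ is $\Gamma$-equivariantly diffeomorphic to $\Gamma/K\times(0,1)$ through the product diffeomorphism $\varphi$ of Lemma~\ref{Lemma:MetricVolumeDecomposition}; the boundary identifications $\overline{\Omega}_i\cap\overline{\Omega}_{i+1}\approx\Gamma/K$ are then immediate. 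Since $\Omega_i$ differs from $\Theta_i$ only by subsets of the singular orbits $M_\pm$, which have codimension at least two in $M$ and hence are negligible for $H^m$-Sobolev functions, the spaces $H_{0,g}^m(\Omega_i)^\Gamma$ and $H_{0,g}^m(\Theta_i)^\Gamma$ coincide and consequently $c_{\Omega_i}^\Gamma = c_{\Theta_i}^\Gamma$, so $\{\Omega_1,\ldots,\Omega_\ell\}$ is again an optimal $\Gamma$-invariant $\ell$-partition.
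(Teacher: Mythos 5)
Your approach is essentially the same as the paper's: reduce the partition problem to the one-dimensional orbit space $[0,d]$ via $r$, use interior regularity (\cite{UhlenbeckViaclovsky2000}) together with the unique-continuation Lemma~\ref{Lemma:UniqueContinuation} to establish strict monotonicity of $c^\Gamma_\Omega$ under strict inclusion of connected $\Gamma$-invariant domains, and then an elementary optimization argument forces the $E_i=r(\Theta_i)$ to be $\ell$ consecutive open intervals covering $(0,d)$ up to finitely many separating points. The paper proves the strict inequality only in the specific form $c_V^\Gamma<\min\{c_{V_1}^\Gamma,c_{V_2}^\Gamma\}$ for an adjacent pair $V_1,V_2$ gluing to $V$, while you state it as a general monotonicity lemma; this is a cosmetic difference and both rest on the same unique-continuation input.

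There is, however, one flawed step at the very end. You claim $H^m_{0,g}(\Omega_i)^\Gamma=H^m_{0,g}(\Theta_i)^\Gamma$ because $\Omega_i\setminus\Theta_i$ lies in a singular orbit of codimension $\geq 2$, "hence negligible for $H^m$-Sobolev functions." For $m=1$ this removability is standard, but for $m\geq 2$ (which is the interesting case in this paper) it is false in general: a compact submanifold of codimension $k$ is removable for $H^m$ only when $k$ exceeds $2m$, so codimension $2$ does not suffice once $m\geq 2$. Fortunately this step is not needed. The paper's argument is shorter and correct: $\Theta_i\subset\Omega_i$ gives $c^\Gamma_{\Omega_i}\leq c^\Gamma_{\Theta_i}$ by zero extension, hence $\sum_i c^\Gamma_{\Omega_i}\leq\sum_i c^\Gamma_{\Theta_i}$, and since the right-hand side already attains the infimum in \eqref{Problem:PartitionProblem}, equality holds and $\{\Omega_1,\dots,\Omega_\ell\}$ is also optimal — no capacity argument required. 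Replace your last paragraph by this observation and the proof is sound.
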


\begin{proof}
First notice that every connected $\Gamma$-invariant open set must be of the form $r^{-1}(t,s)$ for some $t,s\in[0,d]$. Now take three points $t_1, t_2, t_3\in(0, d)$ and define the sets  $V_1=\pi^{-1}(t_1, t_2)$, $V_2=\pi^{-1}(t_2, t_3)$ and $V=\pi^{-1}(t_1, t_3)$. Note that these sets are $\Gamma$-invariant by construction, their boundaries are smooth because $r^{-1}(t_i)$ is a principal orbit, and since every orbit is connected and $r$ is a quotient map, then $V_1,V_2$ and $V$ are also connected by Lemma \ref{Lemma:PreimageConnectedSets}. Hence, by Proposition \ref{Proposition:ExistenceLeastEnergySolution}, the least energy solution to the problem  \eqref{eq:dirichlet} is attained in each domain and
\[
c_V^\Gamma \leq \min\{c_{V_1}^\Gamma, c_{V_2}^\Gamma\},
\]
where this inequality holds true because $V_i\subset V$ and every nontrivial function in $H_{0,g}^m(V_i)^\Gamma$ can be extended by zero to a nontrivial function in $H_{0,g}^m(V_i)^\Gamma$. We next prove that the inequality is strict. Suppose, to get a contradiction, and without loss of generality, that $c_V^\Gamma = c_{V_1}^\Gamma$ and let $u\in H_{0,g}(V_1)^\Gamma$ be a least energy solution to the Dirichlet boundary problem \eqref{eq:dirichlet} in $V_1$. Therefore the function $\hat{u}\in H_{0,g}^m(V)^\Gamma$ given by $\hat{u}=u$ in $V_1$ and $\hat{u}=0$ in $V\smallsetminus V_1$ is a least energy solution to \eqref{eq:dirichlet} in $V$ and by interior regularity \cite{UhlenbeckViaclovsky2000}, this function has a $C^{2m}$ class representative. Proposition \ref{Lemma:UniqueContinuation} yields that $\hat{u}$ must vanish in $V$, which is a contradiction and the strict inequality follows.

Hence, if $\{\Theta_1,\ldots,\Theta_\ell\}$ is a $\Gamma$-invariant solution to the $\ell$-partition problem \eqref{Problem:PartitionProblem}, then $(0,d)\setminus \cup^\ell_{i=1} r(\Theta_i)$ consists exactly of $\ell-1$ points, say $a_1,\dots, a_{\ell-1}$.

Now define $\Omega_i$ as in the statement. As $r^{-1}(t)$ is either a connected principal orbit or a connected singular orbit for each $t\in[0,d]$, by Lemma \ref{Lemma:PreimageConnectedSets} these sets $\Omega_i$ are connected and $\partial\Omega_i$ consists in one or two disjoint principal orbits, from which it follows that these sets are smooth. As $r(\overline{\Omega}_i)=[a_{i-1}, a_{i}]$, for $i=1,\ldots \ell$ (where $a_0:= 0$ and $a_\ell = d$), then $\overline{\Omega_1\cup\ldots\cup\Omega_\ell} = M$. By definition, $\Omega_i\cap\Omega_j=\emptyset$ if $\vert i - j \vert\geq 2$, $\overline{\Omega}_{i}\cap\overline{\Omega}_{i+1}= r^{-1}(i)\approx M_{d/2}\approx \Gamma/K$, and
\[
\Omega_i = \begin{cases}
r^{-1}(a_i,a_{i+1})\approx M_{d/2}\times(a_{i-1},a_{i})\approx \Gamma/K\times(0,1), & i=2,\ldots,\ell-1,\\
 r^{-1}[0,a_1)\approx G\times_{K-}D_{-}, & i=1,\\
 r_1^{-1}(a_{\ell-1},d]\approx G\times_{K+}D_{+}, & i=\ell.
\end{cases}
\]
The equivariant form of the sets $\Omega_1$ and $\Omega_{\ell}$ follows from the Tubular Neighborhood Theorem \cite[Theorem 3.57]{AlexBettiol}. In fact, they are the associated bundles to the principal $K$-bundle $K\to\Gamma\to\Gamma/K$.
\newline

Finally as $\Theta_i\subset\Omega_i$, then $c_{\Omega_i}^\Gamma\leq c_{\Theta_i}^\Gamma$ and $\{\Omega_1,\ldots,\Omega_\ell\}\in\mathcal{P}_\ell^\Gamma$ is also a solution to problem \eqref{Problem:PartitionProblem}.
\end{proof}
\medskip

\begin{proof}[Proof of Theorem \ref{Theorem:OptimalPartition}] 
With minor modifications, the proof is the same as in \cite[Theorem 1.2]{ClappFernandezSaldana2021}. We sketch it for the reader's convenience.

Fix $\nu_i=1$ in \eqref{Eq:Q Systems} for each $i=1,\ldots,\ell$, and let $(\eta_{ij,k})_{k\in\mathbb{N}}$ be a sequence of negative numbers such that $\eta_{ij,k}=\eta_{ji,k}$ and $\eta_{ij,k}\to-\infty$ as $k\to\infty$. To highlight the role of $\eta_{ij,k}$, we write $\mathcal{J}_k$ and $\mathcal{N}_k$ for the functional and the set associated to the system \eqref{Eq:Q Systems}, introduced in Section~\ref{sec:system}, with $\eta_{ij}$ replaced by $\eta_{ij,k}$. By Theorem \ref{Th:MainQSystems}, for each $k\in\mathbb{N}$ we can take $\overline{u}_k=(u_{k,1},\ldots,u_{k,\ell})\in\cN_k$ such that
	$$c_k^\Gamma:= \inf_{\mathcal{N}_k} \mathcal{J}_k =\mathcal{J}_k(\overline{u}_k)=\frac{m}{N}\sum_{i=1}^\ell\|u_{k,i}\|_{P_g}^2.$$
	Let
	\begin{align*}
		\mathcal{N}_0:=\{(v_1,\ldots,v_\ell)\in\mathcal{H}:\,&v_i\neq 0,\;\|v_i\|_{P_g}^2=\int_{M}|v_i|^{{2^*_m}}, \text{ and }v_iv_j=0\text{ a.e. in }M \text{ if }i\neq j\}.
	\end{align*}
	Then, $\mathcal{N}_0\subset\mathcal{N}_k$ for all $k\in\mathbb{N}$ and, therefore, 
	\begin{equation}\label{Eq:InequalityC_0}
	0<c_k^\Gamma\leq c_0^\Gamma:=\inf\left\{\frac{m}{N}\sum_{i=1}^\ell\|v_i\|_{P_g}^2:(v_1,\ldots,v_\ell)\in\mathcal{N}_0\right\}<\infty.
	\end{equation}
	
	We claim that 
	\begin{equation}\label{Claim:InfimumOptimalPartition}
	    c_0^\Gamma \leq \inf_{\{ \Phi_1,\ldots,\Phi_\ell \}\in\mathcal{P}_\ell^\Gamma} \sum_{i=1}^\ell c_{\Phi_i}^\Gamma
	\end{equation}
	
	Indeed, if $\{\Phi_1,\ldots,\Phi_\ell\}\in\mathcal{P}_\ell^\Gamma$ and $v_i\in\mathcal{M}_{\Phi_i}^\Gamma\subset H_{0,g}^m(\Phi_i)^\Gamma$, then extending this function by zero outside $\Phi_i$, we get that $v_i\in H^m_g(M)^\Gamma$ and $v_i,v_j = 0$ a.e. in $M$, for $\Phi_i\cap \Phi_j=\emptyset$. Therefore $\overline{v}:=(v_1,\ldots,v_\ell)\in\mathcal{N}_0\subset\mathcal{N}_1$ and
	\[
	c_0^\Gamma\leq \frac{m}{N}\Vert v_i\Vert^2_{P_g} = \mathcal{J}_1(\overline{v}) = \sum_{i=1}^\ell J_{\Phi}(v_i).
	\]
	As $v_i\in\mathcal{M}_{\Phi_i}^\Gamma$ was arbitrary, it follows that
	\[
	c_0^\Gamma \leq \sum_{i=1}^\ell c_{\Phi_i}^\Gamma,
	\]
	and as $\{\Phi_1,\ldots,\Phi_\ell\}\in\mathcal{P}_\ell^\Gamma$ was arbitrary, inequality  \eqref{Claim:InfimumOptimalPartition} follows.

	From \eqref{Eq:InequalityC_0}, it follows that the sequence $(\overline{u}_k)$ is bounded in $\mathcal{H}$. So, using Lemma~\ref{Lemma:Sobolev}, after passing to a subsequence, we get that $u_{k,i} \rightharpoonup u_{\infty,i}$ weakly in $H_{g}^{m}(M)^\Gamma$, $u_{k,i} \to u_{\infty,i}$ strongly in $L_g^{{2^*_m}}(M)$, and $u_{k,i} \to u_{\infty,i}$ a.e. in $M$ for each $i=1,\ldots,\ell$.  Moreover, as $\partial_i\mathcal{J}_k(\overline{u}_k)[u_{k,i}]=0$, we have for each $j\neq i$,
	\begin{align*}
		0\leq\int_{M}\beta_{ij}|u_{k,j}|^{\alpha_{ij}}|u_{k,i}|^{\beta_{ij}}\leq \frac{1}{-\eta_{ij,k}}\int_{M}|u_{k,i}|^{{2^*_m}}\leq \frac{C}{-\eta_{ij,k}}.
	\end{align*}
	Then, Fatou's lemma yields 
	$$0 \leq \int_{M}|u_{\infty,j}|^{\alpha_{ij}}|u_{\infty,i}|^{\beta_{ij}} \leq \liminf_{k \to \infty} \int_{M}|u_{k,j}|^{\alpha_{ij}}|u_{k,i}|^{\beta_{ij}} = 0.$$
	Hence, $u_{\infty,j} u_{\infty,i} = 0$ a.e. in $M$. By Lemma \ref{lem:away_froM_{d/2}},
	$$0<d_0 \leq \|u_{k,i}\|_{P_g}^2 \leq\int_{M} |u_{k,i}|^{{2^*_m}}\qquad\text{for all \ }k\in\mathbb{N},\;i=1,\ldots,\ell,$$
	and, as $u_{k,i} \to u_{\infty,i}$ strongly in $L^{{2^*_m}}(M)$ and $u_{k,i} \rightharpoonup u_{\infty,i}$ weakly in $H_{g}^m(M)$, we get
	\begin{equation} \label{eq:comparison2}
		0<\|u_{\infty,i}\|_{P_{g}}^2 \leq \int_{M}|u_{\infty,i}|^{{2^*_m}}\qquad\text{for every \ }i=1,\ldots,\ell.
	\end{equation}
	Since $u_{\infty,i}\neq 0$, there is a unique $t_i\in(0,\infty)$ such that $\|t_iu_{\infty,i}\|_{P_g}^2 = \int_{M}|t_iu_{\infty,i}|^{{2^*_m}}$. So $(t_1u_{\infty,1},\ldots,t_\ell u_{\infty,\ell})\in \mathcal{N}_0$. The inequality \eqref{eq:comparison2} implies that $t_i\in (0,1]$. Therefore,
	\begin{align*}
		c_0^\Gamma &\leq \frac{m}{N}\sum_{i=1}^\ell\|t_iu_{\infty,i}\|_{P_g}^2 \leq \frac{m}{N}\sum_{i=1}^\ell\|u_{\infty,i}\|_{P_g}^2\leq \frac{m}{N}\liminf_{k\to\infty}\sum_{i=1}^\ell\|u_{k,i}\|_{P_g}^2=\liminf_{k\to\infty} c_k^\Gamma \leq c_0^\Gamma.
	\end{align*}
	It follows that $u_{k,i} \to u_{\infty,i}$ strongly in $H_{g}^m(M)^\Gamma$ and $t_i=1$, yielding 
	\begin{equation}\label{eq:limit}
		\|u_{\infty,i}\|_{P_g}^2 = \int_{M}|u_{\infty,i}|^{{2^*_m}},\qquad\text{and}\qquad\frac{m}{N}\sum_{i=1}^\ell\|u_{\infty,i}\|_{P_g}^2 =c_0^\Gamma.
	\end{equation}
	
	By Corollary \ref{Corollary:Regularity}, we can take $u_{\infty,i}\in C^{m-1}(M\smallsetminus (M_+\cup M_-))$ so that $u_{\infty,i}u_{\infty,j}=0$ in $M\smallsetminus(M_-\cup M_+)$, $i\neq j$. It follows from continuity that the set
	\[
	\Theta_i:=\{ x\in M\smallsetminus(M_-\cup M_+)\;:\; u_{\infty,i}\neq 0 \}, \ i=1,\ldots,\ell
	\]
	is nonempty, open, $\Gamma$-invariant and $\Theta_i\cap\Theta_j=\emptyset$ if $i\neq j$. 
	
	Set 
	\[
	\Omega_i = \text{int}(\overline{\Theta}_i), \ i=1,\ldots,\ell.
	\]
	These sets are also nonempty, $\Gamma$-invariant and open, and satisfy that $\Omega_i\cap\Omega_j = \emptyset$ if $i\neq j$ and $u_{\infty,i}=0$ in $M\smallsetminus{\Omega_i}$. Hence $\{\Omega_1,\ldots,\Omega_\ell\}\in\mathcal{P}_\ell^\Gamma$. Since each connected component of $\partial\Omega_i$ is of the form $r^{-1}(t)\approx M_{d/2}$ for some $t\in (0,d)$, it is smooth and
	\[
	H_{0,g}^m(\Omega_i)=\{u\in H_g^m(M) \; : \; u=0 \text{ in } M\smallsetminus\Omega_i\}.
	\] (Cf.  \cite[Lemma A.1]{ClappFernandezSaldana2021} and \cite[Theorem 1.4.2.2]{GrisvardBook}). Hence, as $u_{\infty,i}=0$ in $M\smallsetminus\Omega_i$, $u_{\infty,i}\neq 0$ in $\Omega_i$ and satisfies \eqref{eq:limit}, it follows that $u_{\infty,i}\in \mathcal{M}_{\Omega_i}^\Gamma\subset H_{0,g}^m(\Omega)^\Gamma$. As $c_{\Omega_i}\leq J_{\Omega_i}(u_{\infty,i})$, using the claim \eqref{Claim:InfimumOptimalPartition} we obtain that
	\begin{equation}\label{Eq:FundamentalInequality}
	\begin{split}
	\inf_{\{ \Phi_1,\ldots,\Phi_\ell \}\in\mathcal{P}_\ell^\Gamma} &\sum_{i=1}^\ell c_{\Phi_i}^\Gamma 
	\leq \	\sum_{i=1}^\ell c_{\Omega_i}^\Gamma
	\leq \sum_{i=1}^\ell J_{\Omega_i}(u_{\infty,i})\\ &= \frac{m}{N}\sum_{i=1}^\ell\|u_{\infty,i}\|^2 = c_0^\Gamma \leq \inf_{(\Phi_1,\ldots,\Phi_\ell)\in\mathcal{P}_\ell^\Gamma}\;\sum_{i=1}^\ell c_{\Phi_i}^\Gamma.
	\end{split}
	\end{equation}
	Also, from this inequality we obtain that $J_{\Omega_i}(u_{\infty,i})=c_{\Omega_i}^\Gamma$ for every $i=1,\ldots,\ell$, for otherwise, we would get that second inequality in \eqref{Eq:FundamentalInequality} is strict, yielding a contradiction. Hence $u_{\infty,i}$ is a (weak) solution to the Dirichlet boundary problem \eqref{eq:dirichlet} and $\{\Omega_1,\ldots,\Omega_\ell\}$ is a solution to the $\Gamma$-invariant $\ell$-partition problem \eqref{Problem:PartitionProblem}. Proposition \ref{prop: solOPchone} yields that, actually, $\Omega_1,\ldots,\Omega_\ell$ satisfy properties (b.1) to (b.3) in Theorem \ref{Theorem:OptimalPartition}.
\end{proof}	
	
	\begin{remark}\label{Remark:OptimalPartitionPowerP}
	Changing the exponent $p=2_m^\ast$ by any $2\leq p\leq 2_m^\ast$, the arguments in this section yield a solution $\{\Omega_1,\ldots,\Omega_\ell\}$ to the $\Gamma$-invariant $\ell$-partition problem associated to the more general Dirichlet boundary problem \eqref{Problem:DirichletBoundary}, where the sets $\Omega_i$ satisfy properties (b.1) to (b.3) in Theorem \ref{Theorem:OptimalPartition}.\qed
	\end{remark}
	
For the case $m=1$, that is, when $P_g=-\Delta_g + R_g$ is just the conformal Laplacian, we have the following result from which Corollary \ref{Corollary:YamabeProblem} follows immediately.

\begin{corollary}
Let $(M,g)$ be a closed Riemannian manifold of dimension $N\geq3$ and let $\Gamma$ be a closed subgroup of $\text{Isom}(M,g)$ satisfying \emph{\hyperref[Gamma:Cohomogeneity]{$(\Gamma1)$}} to \emph{\hyperref[Gamma:MetricDecomposition]{$(\Gamma3)$}}. If the scalar curvature $R_g$ is positive, and if $\{\Omega_1, \dots, \Omega_\ell\}\in\mathcal{P}_\ell^\Gamma$ is  the solution to the optimal $\Gamma$-invariant  $\ell$-partition problem given in Theorem \ref{Theorem:OptimalPartition}, then the function
\[
u_\ell := \sum_{i=1}^\ell (-1)^{i}u_{\infty,i}
\]
is a $\Gamma$-invariant sign-changing solution to the Yamabe problem
\[
-\Delta_g u + \frac{N-2}{4(N-1)}R_g u = \vert u\vert^{2_1^\ast-2}u,\quad \text{on } M
\]
having exactly $\ell$ nodal domains and having least energy among all such solutions.
\end{corollary}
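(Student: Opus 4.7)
The plan is to combine three ingredients: (i) positivity of each Dirichlet component via the maximum principle (available since $m=1$ and $R_g>0$), (ii) a derivative-matching identity at the interior interfaces that comes from optimality of the partition, and (iii) the standard comparison between a sign-changing solution and the partition induced by its nodal set.

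\textbf{Step 1 (positivity and one-dimensional reduction).} Since $m=1$ and $R_g>0$, the operator $P_g=-\Delta_g+\tfrac{N-2}{4(N-1)}R_g$ satisfies the strong maximum principle. First I would observe that the least energy $\Gamma$-invariant Dirichlet solutions $u_{\infty,i}$ produced by Theorem \ref{Theorem:OptimalPartition} can be taken strictly positive on $\Omega_i$: the standard Benci--Cerami argument of replacing $u_{\infty,i}$ by $|u_{\infty,i}|$ (which has the same energy and lies in $\mathcal{M}_{\Omega_i}^\Gamma$) together with Hopf's lemma does the job. By Proposition \ref{prop: solOPchone}, $\Omega_i=r^{-1}(a_{i-1},a_i)$ for some $0=a_0<a_1<\cdots<a_\ell=d$, and $\Gamma$-invariance gives $u_{\infty,i}=w_{\infty,i}\circ r$, with $w_{\infty,i}>0$ a classical solution on $(a_{i-1},a_i)$ of the reduced one-dimensional Dirichlet problem of Corollary \ref{Corollary:EquivalenceDirichletBoundaryProblem}, vanishing at the endpoints.

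\textbf{Step 2 (the hard part: $C^1$-matching at interior interfaces).} The main task will be to show that $w_\ell:=\sum_i(-1)^i w_{\infty,i}$ is a classical $C^2$ solution of the reduced ODE throughout $(0,d)$; once this is done, $u_\ell=w_\ell\circ r$ is a classical $\Gamma$-invariant solution of the Yamabe equation on $M\setminus(M_-\cup M_+)$, and since the singular orbits have measure zero and $u_\ell\in H_g^1(M)^\Gamma$, a weak (then classical) solution on all of $M$. The only nontrivial point is $C^1$-matching at each interior node $a_i$, $i=1,\ldots,\ell-1$. My approach would be to exploit that $(a_1,\ldots,a_{\ell-1})$ minimizes
\[
F(t_1,\ldots,t_{\ell-1}):=\sum_{i=1}^\ell c_{r^{-1}(t_{i-1},t_i)}^\Gamma,
\]
and to apply a Hadamard-type shape-derivative formula for the one-dimensional weighted Dirichlet problem of Section \ref{Section:OneDimensionalReduction} (with weight $\beta(t)=\mathrm{Vol}(M_{d/2},g_t)$), which at the minimizer reads
\[
0=\frac{\partial F}{\partial t_i}\Big|_{t=a}=\frac{\beta(a_i)}{2}\bigl[(w_{\infty,i+1}'(a_i^+))^2-(w_{\infty,i}'(a_i^-))^2\bigr].
\]
Combined with Hopf's lemma (so that $w_{\infty,i}'(a_i^-)<0$ and $w_{\infty,i+1}'(a_i^+)>0$) this forces $w_{\infty,i}'(a_i^-)=-w_{\infty,i+1}'(a_i^+)$, and the alternating sign factor $(-1)^i$ then yields $w_\ell'(a_i^-)=w_\ell'(a_i^+)$; $C^2$-matching follows from the ODE itself.

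\textbf{Step 3 (nodal count and minimality).} By construction the sets $\Omega_1,\ldots,\Omega_\ell$ are disjoint and connected, $u_\ell=(-1)^i w_{\infty,i}\circ r$ has constant sign on each $\Omega_i$, and the interior zero set is the disjoint union of the $\ell-1$ principal orbits $r^{-1}(a_i)$; hence $u_\ell$ has exactly $\ell$ nodal domains. Minimality will follow by the standard comparison: for any $\Gamma$-invariant sign-changing solution $v$ of the Yamabe equation with exactly $\ell$ nodal domains $\Phi_1,\ldots,\Phi_\ell$, each $\Phi_i$ is nonempty, open and $\Gamma$-invariant, and $v|_{\Phi_i}\in\mathcal{M}_{\Phi_i}^\Gamma$, so $J_{\Phi_i}(v|_{\Phi_i})\geq c_{\Phi_i}^\Gamma$; summing and using \eqref{Claim:InfimumOptimalPartition} together with $J_M(u_\ell)=\tfrac{m}{N}\sum_i\|u_{\infty,i}\|_{P_g}^2=c_0^\Gamma$ from \eqref{eq:limit} gives
\[
J_M(v)=\sum_{i=1}^\ell J_{\Phi_i}(v|_{\Phi_i})\geq\sum_{i=1}^\ell c_{\Phi_i}^\Gamma\geq c_0^\Gamma=J_M(u_\ell).
\]
The principal obstacle is Step 2: deriving the shape-derivative identity and reading it as a derivative-matching condition; everything else is either routine elliptic machinery or an immediate consequence of Proposition \ref{prop: solOPchone}.
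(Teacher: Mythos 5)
Your proposal is correct and follows the same strategy the paper adopts, namely the one from the proof of Theorem 4.1(iii) in \cite{CSS21}: positivity of the Dirichlet components, $C^1$-matching of the alternating sum across the interior interfaces via optimality of the partition, the nodal count, and the comparison argument for minimality are precisely the ingredients the paper relies on. There is one place where your route differs and is slightly less careful than the paper's. You establish positivity by applying the Benci--Cerami trick to $u_{\infty,i}$ itself, i.e.\ replacing $u_{\infty,i}$ by $|u_{\infty,i}|$; but the corollary asserts that the \emph{specific} function $u_\ell=\sum_i(-1)^i u_{\infty,i}$ built from the $u_{\infty,i}$ of Theorem \ref{Theorem:OptimalPartition} is the solution, so one needs those particular limits to be nonnegative (otherwise your argument produces the solution $\sum_i(-1)^i|u_{\infty,i}|$, not the $u_\ell$ in the statement, and the nodal count could also be wrong for the original $u_\ell$). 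The paper handles this cleanly upstream: by \cite[Theorem~3.4(a)]{ClappSzulkin19} and the maximum principle for $m=1$, the least-energy solutions $\overline u_k$ of the system can be taken with nonnegative components, and nonnegativity survives the strong $H^1$-convergence $u_{k,i}\to u_{\infty,i}$, so the $u_{\infty,i}$ in Theorem \ref{Theorem:OptimalPartition} are themselves nonnegative, hence strictly positive on $\Omega_i$ by the strong maximum principle. With that adjustment (or simply importing the paper's positivity step), the rest of your argument, including the Hadamard-type identity and the Hopf-lemma sign resolution, is sound; just be aware that differentiability of $t_i\mapsto c^\Gamma_{r^{-1}(t_{i-1},t_i)}$, which your Step~2 uses without comment, is a nontrivial technical point that the cited reference takes care of.
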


\begin{proof}
Since the maximum principle is valid for the operator $P_g=-\Delta_g + \frac{N-2}{4(N-1)}R_g$, the least energy $\Gamma$-invariant fully nontrivial solution to the system \eqref{Eq:Q Systems} given by Theorem \ref{Th:MainQSystems} can be taken to be positive in each of its components \cite[Theorem 3.4 a)]{ClappSzulkin19}. In this way, each component of the functions $\overline{u}_k$, defined in the proof of Theorem \ref{Th:MainQSystems}, is nonnegative, yielding that $u_{\infty,i}$ is also nonnegative for every $i=1,\ldots,\ell$ and $\Omega_i=\{x\in M\smallsetminus(M_-\cup M_+) \;:\; u_{\infty,i}>0\}$. The rest of the proof is, up to minor details, the same as in the proof of item (iii) of Theorem 4.1 in \cite{CSS21}.
\end{proof}

\begin{remark}
As it was already noticed in Remark \ref{Remark:OptimalPartitionPowerP}, Theorem \ref{Theorem:OptimalPartition} holds true for $Q$-curvature type equations with power nonlinearities $2\leq p\leq 2^\ast_m$. Hence, the previous corollary is also true for Yamabe-type problems of the form
\[
-\Delta_g u + R u = \vert u\vert^{p-2}u,\quad \text{on } M,
\]
where $2\leq p\leq 2_1^\ast$ and $R\in \mathcal{C}^\infty(M)$ is positive and $\Gamma$-invariant.\qed
\end{remark}
\medskip

\section{Examples of cohomogeneity one actions satisfying hypothesis of Theorem \ref{Theorem:OptimalPartitionSymmetry}} \label{Section:Examples}

In this section we will see concrete examples in which Theorem \ref{Theorem:OptimalPartitionSymmetry} can be applied. First, we will discuss cohomogeneity one actions where the metric decomposition (\hyperref[Gamma:MetricDecomposition]{$\Gamma3$}) holds true.
\newline

Let $\Gamma$ be a closed subgroup of isometries of $(M,g)$ inducing a cohomogeneity one action. As before, the principal orbits of the action correspond to the hypersurfaces given by the regular level sets of $r$, and if $K$ denote the principal isotropy, all of them are diffeomorphic to $\Gamma/K$ (see \cite[Proposition 6.41]{AlexBettiol}). Hence, we can fix one of these level hypersurfaces, say $M_{d/2}:= r^{-1}(d/2)$. When we have a cohomogeneity one action by isometries, we can describe the decomposition of the metric $g$ in terms of a one-parameter family of metrics on $M_{d/2}$ as follows. It is known that, given a minimizing horizontal geodesic between the $M_+$ and $M_{-}$, a $\Gamma$-invariant metric $g$ away from the singular orbits can be written as:
\[
g=dt^2+g_t,
\]
for $t\in (0, d)$, where $g_t$ is a smooth family of homogeneous metrics on $\Gamma/K=M_{d/2}$. Let $\mathfrak{g}$,  $\mathfrak{k}$ be the Lie algebras of $\Gamma$ and $K$, respectively. Let $\mathfrak{m}$ be the orthogonal complement of $\mathfrak{k}$ in $\mathfrak{g}$, with respect to a bi-invariant metric $B$ on $\mathfrak{g}$. Since $\Gamma$ is compact, then it admits such a metric. There is a natural identification of $\mathfrak{m}$ with the tangent space to a principal orbit $M_{d/2}$. Namely, for each $X\in \mathfrak{m}$,
\begin{equation*}
X_p^*:=\left.\frac{d}{dt}\right|_{t=0} \exp(t X)\cdot p
\end{equation*}
is a tangent vector at $p\in M_{d/2}$. Then $g_t$ corresponds to a $1$-parameter family of invariant inner products on $\mathfrak{m}$. 
\newline

One way to obtain the metric decomposition (\hyperref[Gamma:MetricDecomposition]{$\Gamma3$}) is when $\mathfrak{m}$ decomposes into $k$ mutually orthogonal Ad$(K)$-invariant subspaces:
\begin{equation}
    \label{PrincipalOrbitDecomposition}
    \mathfrak{m}=\mathfrak{m}_1\oplus \cdots \oplus \mathfrak{m}_k
\end{equation}
such that the metric $g_t$ can be written as:
\[
    g_t=\sum_{j=1}^k f_j^2(t)\ B|_{\mathfrak{m}_j},
\]
for some positive smooth functions $f_j$, $j=1, \dots, k$ on $(0, d)$, and satisfying some smoothness conditions at $0$ and $d$. Such conditions describe a compactification of $M_{d/2}\times (0, d)$ by adding two compact submanifolds, corresponding to the endpoints of $(0, d)$.
\newline

These types of cohomogeneity one metrics are called {\em diagonal metrics}. A decomposition in this form can be obtained in several settings:
\begin{itemize}

 \item If $\Gamma$ is a simple Lie group (i.e., if $\mathfrak{g}$ is simple), by the Schur's Lemma \cite[Theorem 4.29]{Hall15}.
 \medskip
 
    \item If $\Gamma$ is a semi-simple Lie group. It follows from the Weyl's Theorem on complete reducibility, which ensures that one obtains a decomposition (\ref{PrincipalOrbitDecomposition}) with irreducible factors, with respect to the adjoint representation; and by the Schur's Lemma. See \cite[Theorem 7.8]{Hall15}.
    \medskip

    \item If the Killing form of $\mathfrak{g}$ is negative definite. More generally, if the adjoint representation of $\mathfrak{g}$ is unitary, one also has such a decomposition. See \cite[Proposition 4.27]{Hall15}. 
\end{itemize}
\medskip

Denote $d_j:= \dim\mathfrak{m}_j$, $j=1, \dots, k$, then the volume form of $g$ is given by:
\[
dV_g=\prod_{j=1}^k f_j^{d_j}\ dt\ dV_{(B|_{\mathfrak{m}_j})},
\]
and we recover the formula in Lemma \ref{Lemma:MetricVolumeDecomposition}. 
\newline

Recall also a fundamental fact: A cohomogeneity one action can be determined through a group diagram $K\subset \{K_{+}, K_{-}\}\subset \Gamma$, provided that $K_{\pm}/K$ are spheres (see Section 6.3 from \cite{AlexBettiol}). 
\newline

We develop this theory for the following well-known example of an isometric action on the round sphere that has been used in several papers to obtain sign-changing solutions to semilinear elliptic problems (see, for instance, \cite{Ding1986,BaScWe,ClappFdz17,FdzPetean20,CSS21,ClappFernandezSaldana2021}).
\newline

\ex  Consider the sphere $(\mathbb{S}^N,g)$ with its canonical metric, which is an Einstein metric with positive scalar curvature. Let $n_1,n_2\geq 2$ be integers such that $n_1+n_2=N+1$. Set
\begin{eqnarray*}
\Gamma=O(n_1)\times O(n_2), &\quad& K=O(n_1-1)\times O(n_2-1)\\ K_{+}=O(n_1-1)\times O(n_2), &\quad& K_{-}=O(n_1)\times O(n_2-1),
\end{eqnarray*}
where $O(n)$ is the group of linear isometries of $\mathbb{R}^n$. Note that we are regarding $\Gamma$ as acting on $\mathbb{S}^{n_1-1}\times \mathbb{S}^{n_2-1}$, trivially in one component, and with the transitive action by rotations in the other one. Then we obtain two possible isotropy groups $K_{\pm}$. A similar approach considers $K$ as a subgroup of $K_{\pm}$. In those cases, the isotropy is a copy of $O(n_1-1)$ or $O(n_2-1)$, for each corresponding case.  Using that the $(n-1)$-sphere can be described as the quotient $\mathbb{S}^{n-1}\simeq O(n)/O(n-1)$, we obtain the following quotients:
\[
\begin{array}{cc}
     \Gamma/K_{+}=\mathbb{S}^{n_1-1},&  \Gamma/K_{-}=\mathbb{S}^{n_2-1},\\
     \Gamma/K=\mathbb{S}^{n_1-1}\times \mathbb{S}^{n_2-1},& K_{+}/K=\mathbb{S}^{n_2-1},\quad K_{-}/K=\mathbb{S}^{n_1-1}.
\end{array}
\]
Hence, the group diagram $K\subset \{K_{+}, K_{-}\}\subset \Gamma$ defines a cohomogeneity one action of $\Gamma=O(n_1-1)\times O(n_2-1)$ on $\mathbb{S}^N$ with singular orbits $\mathbb{S}^{n_1-1}$ and $\mathbb{S}^{n_2-1}$, and with principal orbit $\mathbb{S}^{n_1-1}\times \mathbb{S}^{n_2-1}$. The orbit space is diffeomorphic to $[0, \pi]$. Therefore, this shows that conditions \hyperref[Gamma:Cohomogeneity]{$(\Gamma1)$} and \hyperref[Gamma:DimensionOrbits]{$(\Gamma2)$} are fulfilled.
\newline

The Lie algebra $\mathfrak{g}$ of $\Gamma$ is isomorphic to $\mathfrak{so}(n_1)\oplus \mathfrak{so}(n_2)$, where $\mathfrak{so}(n)$ denotes the Lie algebra of the $n\times n$ skew-symmetric matrices. It is a simple Lie algebra of dimension $n(n-1)/2$, except for the case $\mathfrak{so}(4)$ which is semi-simple. In this last case, its decomposition into simple factors is:
\[
\mathfrak{so}(4)=\mathfrak{so}(3)\oplus \mathfrak{so}(3).
\]

As previously, denote by $\mathfrak{m}$ the ${\rm Ad}(K)$-invariant complement of $\mathfrak{k}$ in $\mathfrak{g}$, where $\mathfrak{k}$ is the Lie algebra of $K$. It is canonically identified with the tangent space at $eK$, $T_{eK}\Gamma/K\simeq \mathbb{S}^{n_1-1}\times \mathbb{S}^{n_2-1}$. Then $\mathfrak{m}$ can be decomposed into two factors $\mathfrak{p}_1$ and $\mathfrak{p}_2$ given by:
\[
\mathfrak{p}_1\simeq\mathfrak{so}(n_1)/\mathfrak{so}(n_1-1), \quad \mbox{and\quad }\mathfrak{p}_2\simeq\mathfrak{so}(n_2)/\mathfrak{so}(n_2-1),
\]
with ${\rm dim}(\mathfrak{p}_1)=n_1-1$ and ${\rm dim}(\mathfrak{p}_2)=n_2-1$. If $n_1=4$ (or $n_2=4$), then
\[
\mathfrak{p}_1\simeq\left(\mathfrak{so}(3)\oplus \mathfrak{so}(3)\right)/\mathfrak{so}(3)\simeq \mathfrak{so}(3).
\]
Therefore, in any case an invariant metric $g$ on $\mathbb{S}^N$ can be written as:
\[
g=dt^2+f_1(t)^2B|_{\mathfrak{p}_1}+f_2(t)^2B|_{\mathfrak{p}_2}
\]
where $B$ is a bi-invariant metric on $\mathfrak{g}$. We may then take:
\[
f_1(t)=\cos(t/2), \quad f_2(t)=\sin(t/2).
\]
Observe that these functions satisfy the smoothness conditions (\ref{SmoothnessCond}). Then condition \hyperref[Gamma:MetricDecomposition]{$(\Gamma3)$} is also satisfied.
\medskip

The mean curvature $h(t)$ of the principal orbit is:
\begin{eqnarray*}
h(t)=(n_1-1)\frac{f_1'(t)}{f_1(t)}+(n_2-1)\frac{f_2'(t)}{f_2(t)}&=&-\frac{(n_1-1)}{2}\frac{\sin(t/2)}{\cos(t/2)}+\frac{(n_2-1)}{2}\frac{\cos(t/2)}{\sin(t/2)}\\
&=&\frac{1}{2}\frac{(n_2-1)\cos^2(t/2)-(n_1-1)\sin^2(t/2)}{\sin(t/2)\cos(t/2)}\\
&=&\frac{2(n_1+n_2-2)\cos(t)}{\sin(t)}-\frac{2(n_2-n_1)}{\sin(t)}.
\end{eqnarray*}
Then the volume of the principal orbits along $(0, \pi)$ is:
\[
2|\mathbb{S}^{n_1-1}||\mathbb{S}^{n_2-1}| \cos^{n_1-1}(t/2)\sin^{n_2-1}(t/2)
\]
where $|\mathbb{S}^{n_i-1}|$ is the $(n_i-1)$-dimensional measure of the sphere $\mathbb{S}^{n_i-1}$, for $i=1, 2$. This setting was under consideration in \cite{ClappFernandezSaldana2021}, where the authors studied  the system (\ref{Eq:Q Systems}) on $\mathbb{R}^N$ and on $\mathbb{S}^N.$\qed
\newline 


\ex Let $(\mathbb{CP}^N, g_{FS})$ be the complex projective space with the Fubini-Study metric. First, recall that $\mathbb{CP}^N=\mathbb{S}^{2N+1}/{\rm U}(1)$, and $\mathbb{S}^{2N+1}\subset \mathbb{R}^{2N+2}$. Write $N=2+k$ for $k\in\mathbb{N}\setminus\{0\}$. Note that we may decompose $\mathbb{R}^{2N+2}\equiv\mathbb{C}^{n_1}\times \mathbb{C}^{n_2}$, where $n_1=n_2=2$ if $k=1$, and $n_1=k$, $n_2=3$ if $k\geq 2$. 

Consider the action of $\Gamma= {\rm U}(n_1)\times {\rm U}(n_2)$ on $\mathbb{C}^{n_1}\times \mathbb{C}^{n_2}$, where ${\rm U}(n_1)$ acts on $\mathbb{C}^{n_1}$ by unitary transformations and trivially on $\mathbb{C}^{n_2}$; and analogously when taking ${\rm U}(n_2)$. Thus we obtain a $\Gamma$-action on $\mathbb{CP}^N$ that can be lifted to an action on $\mathbb{S}^{2N+1}$ that commutes with the diagonal action of ${\rm U}(1)$. Since the complex projective space of dimension $n$ can be written as $\mathbb{CP}^{n}={\rm U}(n+1)/({\rm U}(n)\times {\rm U}(1))$, and the $2n+1$-sphere as $\mathbb{S}^{2n+1}={\rm U}(n+1)/{\rm U}(n)$, the groups
\begin{eqnarray*}
\Gamma={\rm U}(n_1)\times {\rm U}(n_2), &\quad& K={\rm U}(n_1-1)\times {\rm U}(n_2-1)\times {\rm U}(1)\\
K_{+}={\rm U}(n_1-1)\times {\rm U}(n_2)\times {\rm U}(1), &\quad& K_{-}={\rm U}(n_1)\times {\rm U}(n_2-1)\times {\rm U}(1)
\end{eqnarray*}
induce a cohomogeneity one action on $\mathbb{CP}^N$. The orbits are diffeomorphic to 
\[
\Gamma/K=\mathbb{CP}^{n_1-1}\times \mathbb{CP}^{n_2-1}, \quad 
\Gamma/K_{+}=\mathbb{CP}^{n_1-1}, \quad \Gamma/K_{-}=\mathbb{CP}^{n_2-1}.
\]
The Fubini-Study can be written as:
\[
g_{FS}=dt^2+g_t=dt^2+f_1g|_{\mathcal{H}} +f_2g|_{\mathcal{V}}
\]
where $g$ is the round metric on $\mathbb{S}^{2N-1}$, while $\mathcal{H}$ and $\mathcal{V}$ are the horizontal and vertical spaces of the Hopf bundle $\mathbb{S}^1\to \mathbb{S}^{2N-1}\to \mathbb{CP}^{N-1}$, 
with:
\[
f_1(t)=\sin(t), \quad f_2(t)=\sqrt{\frac{2N-2}{N}}\sin(t)\cos(t).
\]
Notice that dim$(\mathcal{H})=2N-2$ and dim$(\mathcal{V})=1$. See also \cite[Example 6.52]{AlexBettiol}.

This exhibits that condition \hyperref[Gamma:MetricDecomposition]{$(\Gamma3)$} is also satisfied.
On the other hand, by means of the Koszul formula we have, 
\[\frac{d}{dt} g_t(X, Y)=2 g_t(L_t(X), Y)\]
where $L_t$ is the shape operator on the principal orbit $\Gamma/K$, that takes a diagonal form:
\[
L_t=-\left(\begin{array}{cc} I_{2N-2} \cdot \cot(t) &0\\ 0 & 2\cot(2t)
\end{array} \right),
\]
where $I_{2N-2}$ is the identity matrix of $(2N-2)\times (2N-2)$. Thus, the mean curvature of the principal orbit $(\Gamma/K, g_t)$ is:
\[
h(t)=2 \cot (2t)+(2N-2)\cot (t).
\]
The volume form also takes a product form as in Lemma \ref{Lemma:MetricVolumeDecomposition}.
\qed
\newline

\ex The symmetric metric on $\mathbb{HP}^N$, given in \cite[Section 4]{Kraines66} can be described in a similar way as in the complex case. 

For an integer $n$, let denote by ${\rm Sp}(n)$ the compact symplectic group. Recall that $\mathbb{HP}^N=\mathbb{S}^{4N+3}/{\rm Sp}(1)$, and $\mathbb{S}^{4N+3}\subset \mathbb{R}^{4N+4}$. Write $N=4+k$ for $k\in\mathbb{N}\cup\{0\}$ and  $\mathbb{R}^{4N+4}\equiv\mathbb{H}^{n_1}\times \mathbb{H}^{n_2}$, where $n_1=n_2=3$ if $k=1$, and $n_1=k$, $n_2=5$ if $k\geq 2$. Consider the action of $\Gamma= {\rm Sp}(n_1)\times {\rm Sp}(n_2)$ on $\mathbb{H}^{n_1}\times \mathbb{H}^{n_2}$, where ${\rm Sp}(n_1)$ acts on $\mathbb{H}^{n_1}$ and trivially on $\mathbb{H}^{n_2}$; and analogously with ${\rm Sp}(n_2)$. Thus we obtain a $\Gamma$-action on $\mathbb{HP}^N$ that can be lifted to an action on $\mathbb{S}^{4N+3}$ that commutes with the action of ${\rm Sp}(1)$. Since 
 $\mathbb{HP}^{n}={\rm Sp}(n+1)/({\rm Sp}(n)\times {\rm Sp}(1))$, and $\mathbb{S}^{2n+1}={\rm Sp}(n+1)/{\rm Sp}(n)$, the groups
\begin{eqnarray*}
\Gamma= {\rm Sp}(n_1)\times {\rm Sp}(n_2), &\quad& K={\rm Sp}(n_1-1)\times {\rm Sp}(n_2-1)\times {\rm Sp}(1)\\
K_{+}={\rm Sp}(n_1-1)\times {\rm Sp}(n_2)\times {\rm Sp}(1), &\quad& K_{-}={\rm Sp}(n_1)\times {\rm Sp}(n_2-1)\times {\rm Sp}(1)
\end{eqnarray*}
induce a cohomogeneity one action on $\mathbb{HP}^N$. The orbits are diffeomorphic to 
\[
\Gamma/K=\mathbb{HP}^{n_1-1}\times \mathbb{HP}^{n_2-1}, \quad 
\Gamma/K_{+}=\mathbb{HP}^{n_1-1}, \quad \Gamma/K_{-}=\mathbb{HP}^{n_2-1}.
\]
Then the standard metric can be written as:
\[
g=dt^2+g_t=dt^2+f_1g|_{\mathcal{H}} +f_2g|_{\mathcal{V}}
\]
where $g$ is the round metric on $\mathbb{S}^{4N-1}$, while $\mathcal{H}$ and $\mathcal{V}$ are the horizontal and vertical spaces of the Hopf bundle $\mathbb{S}^3\to \mathbb{S}^{4N-1}\to \mathbb{HP}^{N-1}$. In this case, 
\[
f_1(t)=\sin(t)\cos (t), \quad f_2(t)=\cos(t),
\]
and dim$(\mathcal{H})=4N-4$ and dim$(\mathcal{V})=3$. The shape operator and the mean curvature can be computed similarly as in the previous case.
\qed
\ex The Page metric is a cohomogeneity one Einstein metric $g=dt^2+g_t$ on $\mathbb{CP}^2\#\overline{\mathbb{CP}^2}$, under the action of ${\rm U}(2)$\cite{Page1978}, with positive scalar curvature. The principal orbits are diffeomorphic to $\mathbb{S}^3$, and the 1-parameter family of invariant metrics $g_t$ on $\mathbb{S}^3$ splits through the classical Hopf fibration:
\[
g_t=f_1^2(t)g_{\mathbb{S}^1}+f_2^2(t)g_{\mathbb{S}^2}.
\]
It can be smoothly extended to the singular orbits that are diffeomorphic to $\mathbb{S}^2$. Here $g_{\mathbb{S}^1}$ and $g_{\mathbb{S}^1}$ are the canonical metrics on ${\mathbb{S}^1}$ and ${\mathbb{S}^2}$, respectively. The group diagram of the action is the same as in Example 6.8.\qed
\newline

\ex The next cases are described in \cite{Bohm98}, and follow a general scheme of construction.
Let $\Gamma$ be a compact Lie group, and let $K\subset  K_{+}=K_{-}$ be subgroups of $\Gamma$, such that $\mathbb{S}^{d_S}=K_{\pm}/K$ is a sphere of dimension $d_S\geq 1$. C. Böhm in \cite[Sections 1 and 2]{Bohm98} gives a description of the initial value problem that allows to obtain cohomogeneity one closed Einstein manifolds $(M, g)$, with positive scalar curvature, satisfying the following:
\begin{itemize}
\item The group that acts is $\Gamma$, and the two singular orbits are the same and have positive dimension. 

\item The space of $\Gamma$-invariant metrics on the principal orbits $\Gamma/K$ is two dimensional, which implies that the metric can be written as:
\[
g=dt^2+f_1(t)^2g^S+f_2^2(t)\bar{g}
\]
where $g^S$ is the canonical metric on the sphere $\mathbb{S}^{d_S}$, and $\bar{g}$ is a $\Gamma$-invariant metric on the singular orbits $\Gamma/K_{\pm}$. 
\end{itemize}

Böhm listed in \cite[Table 1]{Bohm98} known manifolds that are obtained through this path. They include $\mathbb{CP}^{N}$, $\mathbb{HP}^{N}$, flag manifolds {$F^N$} and the Cayley plane $Ca\mathbb{P}^2$.  It is worth mentioning that the underlying actions are different from those actions given in Examples 6.2 and 6.3. They are given by the standard action of unitary groups on $\mathbb{CP}^{N}$; by symplectic groups on $\mathbb{HP}^{N}$ and $F^N$; and by the $Spin(9)$ group on $Ca\mathbb{P}^2$. 
\medskip

Therefore all of these examples fulfill conditions \hyperref[Gamma:Cohomogeneity]{$(\Gamma1)$}, \hyperref[Gamma:DimensionOrbits]{$\Gamma2$} and \hyperref[Gamma:MetricDecomposition]{$(\Gamma3)$}.\qed
\newline

\ex  
 Along with the same scheme described in Example 6.5, Böhm also constructed a cohomogeneity one Einstein metric on $\mathbb{HP}^2\#\overline{\mathbb{HP}^2}$, under the action of ${\rm Sp}(2)$, and with positive scalar curvature. The principal orbits are diffeomorphic to $\mathbb{S}^7$ and the singular orbits are both diffeomorphic to $\mathbb{S}^4$. See \cite[Theorem 3.5]{Bohm98}. Here $\overline{\mathbb{HP}^2}$ denotes $\mathbb{HP}^2$ with the opposite orientation.\qed
 \newline
 
\ex The scheme described in \cite{Bohm98} is exploded to obtain more general examples. If $\Gamma/K$ is a compact, connected, isotropy irreducible homogeneous space, that is not a torus, having $1< {\rm dim}(\Gamma/K)\leq 6$ and $3\leq k+1\leq 9-{\rm dim}(\Gamma/K)$, then Böhm proved that $\mathbb{S}^{k+1}\times \Gamma/K$ admits infinitely many non-isometric cohomogeneity one Einstein metrics with positive scalar curvature \cite[Theorem 3.4]{Bohm98}. \qed
\newline

Theorem \ref{Theorem:OptimalPartitionSymmetry} also holds on manifolds with positive scalar curvanture (not necessarily Einstein manifolds), with an action satisfying  \hyperref[Gamma:Cohomogeneity]{$(\Gamma1)$}--\hyperref[Gamma:MetricDecomposition]{$(\Gamma3)$}. Among others, the Koiso-Cao soliton metric will serve to provide explicit examples of this situation.

\ex The Koiso-Cao soliton is a metric constructed on $\mathbb{CP}^2\#\overline{\mathbb{CP}^2}$. Set
\[
\Gamma=U(2), \ K=U(1), \ K_{+}=K_{-}=U(1)\times U(1)
\]
Since $K_{\pm}/K=U(1)=\mathbb{S}^1$, these groups induce a cohomogeneity one action on a manifold $M$ of dimension $4$, whose principal orbits are diffeomorphic to $\mathbb{S}^3$, and the singular orbits are both diffeomorphic to $\mathbb{S}^2$. By the decomposition \cite[Proposition 6.33]{AlexBettiol} $M$ is equivariantly diffeomorphic to the $\mathbb{S}^2$-bundle:
\[
M\simeq \mathbb{S}^3\times_{\mathbb{S}^1}\mathbb{S}^2\bigcup_{\mathbb{S}^3} \mathbb{S}^3\times_{\mathbb{S}^1}\mathbb{S}^2\simeq \mathbb{S}^3\times_{\mathbb{S}^1}\mathbb{S}^2.
\]
where $\mathbb{S}^1$ is acting on $\mathbb{S}^2=\{(z, x)\in \mathbb{C}\times \mathbb{R}|\ |z|^2+x^2=1\}$ by:
\[
\mathbb{S}^1\times \mathbb{S}^2\to \mathbb{S}^2,\quad (e^{i\theta}, (z, x))\mapsto (e^{i\theta}z , x).
\]
Thus $\mathbb{S}^3\times_{\mathbb{S}^1}\mathbb{S}^2$ is the associated bundle to the classical Hopf fibration $\mathbb{S}^1\to \mathbb{S}^3\to \mathbb{S}^2$. Furthermore, recall that the only $\mathbb{S}^2$-bundles over $\mathbb{S}^2$ are $\mathbb{S}^2\times \mathbb{S}^2$  or $\mathbb{CP}^2\#\overline{\mathbb{CP}^2}$. Therefore $M=\mathbb{CP}^2\#\overline{\mathbb{CP}^2}$. We denote the orbit space by $M/{\rm U}(2)=[\alpha, \beta]$.
\newline

Following \cite{TOR17}, consider $f_2:[\alpha, \beta]\to \mathbb{R}$ the positive smooth solution to the equation:
\begin{equation*}
2f_2f_2''+4f_2'^2-4+f_2^2(1+cf_2'^2)=0.
\end{equation*}
satisfying $f_2'(\alpha)=f_2'(\beta)=0$, and $f_2(\alpha)f_2''(\alpha)=-f_2(\beta)f_2''(\beta)=-1$. Here $c$ is the unique root of the function 
\[
\xi(x) = e^{2x} (2- 4x+ 3x^2)- 2 + x^2.
\]
See \cite[Lemma 4.1]{Cao1}. Furthermore, observe that:
\[
\xi(-2/3)=\frac{6}{e^{4/3}}-\frac{14}{9}>0,\  \mbox{and}\ \xi(-1/2)=\frac{19}{4 e}-\frac{7}{4}<0 
\]
which implies that $-1<c<-1/2$. One also may obtain $f_2(\alpha)=\sqrt{6}$ and $f_2(\beta)=\sqrt{2}$. Set $g_{\mathbb{S}^1}$ and $g_{\mathbb{S}^2}$ the round metrics on $\mathbb{S}^1$ and $\mathbb{S}^2$, respectively. It was shown in \cite[Section 2]{TOR17} that setting $f_1=-f_2f_2'$, the metric
\[
g=dt^2+f_1^2(t)g_{\mathbb{S}^1}+f_2^2(t)g_{\mathbb{S}^2},   
\]
with the above conditions on $f_2$, determines a U$(2)$-invariant Kähler-Ricci soliton on $\mathbb{CP}^2\#\overline{\mathbb{CP}^2}$. The work of X.-J. Wang and X. Zhu \cite{Wang-Zhu04} directly implies that this must be the Koiso-Cao soliton.
\newline

The shape operator $L_t$ takes the block diagonal form:
\[
L_t=\left(\begin{array}{cc} \frac{f_1'}{f_1} &0\\ 0 & \frac{f_2'}{f_2}I_{2}\end{array} \right),
\]
where $I_{2}$ is the identity matrix of dimension $2$. Thus the mean curvature $h(t)$ is given by:
\begin{equation*}
h(t)=\frac{f_1'}{f_2}+2\frac{f_2'}{f_2}.
\end{equation*}
It can be easily verified that $h$ is smooth away from the singular orbits.
\newline

The Hopf fibration may be used to describe the Koiso-Cao soliton as a local product $\mathbb{S}^1\times U\times (\alpha, \beta)$, with $U$ an open subset of $\mathbb{S}^2$. The volume of the principal orbit is $-f_2^3f_2'$. Indeed, using the conditions on $f_2$, the volume of $g$ is easily computed:
\begin{eqnarray*}
{\rm Vol}({g})&=&-2\pi^2\int_{\alpha}^{\beta}f_2^3f_2'\ dt\nonumber\\
&=&16\pi^2.
\end{eqnarray*}

Even more, the radial Ricci curvature is:
\begin{eqnarray*}
    Ric_g(\nabla f, \nabla f)=f'^2Ric\left(\frac{\partial }{\partial t}, \frac{\partial }{\partial t}\right)>0.
\end{eqnarray*} 
The scalar curvature of the Koiso-Cao soliton can be computed in terms of the function $f_2$. First the potential $f$ of the Ricci soliton is given by $
f=-\frac{cf_2^2}{2}$ (see \cite[equation (2.6)]{TOR17}). Recall that by averaging with the action in the Ricci soliton equation (\ref{eqn: RicciSoliton}), we may obtain that the scalar curvature $R_g$ is a U$(2)$-invariant function. Taking the trace in (\ref{eqn: RicciSoliton}), we have
\[
R_g-\Delta_g f=4
\]
and therefore the scalar curvature is:
\begin{equation*}
R_g=4cf_2'^2+2cf_2 f_2''+4.
\end{equation*}
By \cite[Proposition 3.1]{TOR17}, $R_g$ is a positive decreasing function in $[\alpha, \beta]$ satisfying:
\[
\max_{[\alpha, \beta]}R_g=R_g(\alpha)=4-2c>0, \quad \min_{[\alpha, \beta]}R_g= R_g(\beta)=4+2c>0.
\]\qed

\ex Besides the aforementioned examples, there is another general form to obtain a cohomogeneity one manifold satisfying conditions \hyperref[Gamma:Cohomogeneity]{$(\Gamma1)$}--\hyperref[Gamma:MetricDecomposition]{$(\Gamma3)$} and hypothesis of Theorem \ref{Theorem:OptimalPartitionSymmetry}. \medskip

A classical result by J. Milnor states that any Lie group $\Gamma$ with compact universal covering admits an Einstein metric $\hat{g}$ of positive scalar curvature \cite[Corollary 7.7]{Milnor76}. Therefore, if $(M, g)$ is a cohomogeneity one Einstein manifold (under the isometric action of $\Gamma$) of positive scalar curvature, then the Riemannian product $(M\times \Gamma, g+\hat{g})$ is also a cohomogeneity one Einstein manifold of positive scalar curvature. For this, we may take, for instance, compact simply connected Lie groups SL$(n, \mathbb{C})$, SU$(n)$, $n\geq1$; or compact Lie groups such as U$(n)$, SO$(n)$, or ${\rm Sp}(n)$. In particular, if the $\Gamma$-action on  $(M, g)$ satisfies \hyperref[Gamma:Cohomogeneity]{$(\Gamma1)$}--\hyperref[Gamma:MetricDecomposition]{$(\Gamma3)$}, then the product manifold $(M\times \Gamma, g+\hat{g})$ also fulfills these conditions.\qed
\newline

On the other hand, recall that the $Q$-curvature of a Riemannian manifold $(M, g)$ with dimension $N\geq 4$ is given by:
\begin{equation}
\label{eqn: Qv3}
Q_g=-{\bf a}\Delta_g R_g+ {\bf b}R_g^2-{\bf c}|Ric_g|^2,
\end{equation}
where $|Ric_g|_g^2:=\sum_{i, j=1}^N|R_{ij}|^2$, for $R_{ij}$ the components of the Ricci tensor, whit ${\bf a}, {\bf b}$ and ${\bf c}$ the positive dimensional constants:
\[
{\bf a}:=\frac{1}{2(N-1)}, \quad {\bf b}:=\frac{N^3-4N^2+16N-16}{8(N-1)^2(N-2)^2}, \quad {\bf c}:=\frac{2}{(N-2)^2}.
\]
\medskip

\begin{lemma}
Let ${\bf a}, {\bf b}$ and ${\bf c}$ the above defined dimensional constants. Seen as functions in $N$, the following are true:
\begin{enumerate}
\item $(2{\bf a}-{\bf c})(N)>0$ if and only if $N>4$, and $(2{\bf a}-{\bf c})(4)=-1/6$. 
\item $({\bf b}-2{\bf a}+{\bf c})(N)>0$ if and only if $N=4$ or $N=5$.

\end{enumerate}
\end{lemma}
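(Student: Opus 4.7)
The plan is purely computational: reduce each expression to a single rational function of $N$ whose denominator is manifestly positive for the relevant dimensions, and then analyze the sign of the numerator polynomial on integers $N \geq 4$.

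For part (1), I would place $2\mathbf{a} = \frac{1}{N-1}$ and $\mathbf{c} = \frac{2}{(N-2)^2}$ over the common denominator $(N-1)(N-2)^2$, which is strictly positive for $N \geq 3$, obtaining
\[
2\mathbf{a} - \mathbf{c} \;=\; \frac{(N-2)^2 - 2(N-1)}{(N-1)(N-2)^2} \;=\; \frac{N^2 - 6N + 6}{(N-1)(N-2)^2}.
\]
The quadratic $N^2 - 6N + 6$ has real roots $N = 3 \pm \sqrt{3}$, with $3 + \sqrt{3} \approx 4.73$, so on positive integers it is negative only at $N = 4$ (where its value is $-2$, giving $2\mathbf{a} - \mathbf{c} = -2/12 = -1/6$) and is positive for every integer $N \geq 5$. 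This proves both assertions in (1).

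For part (2), using the same common denominator structure, I write
\[
\mathbf{b} - 2\mathbf{a} + \mathbf{c} \;=\; \mathbf{b} - (2\mathbf{a} - \mathbf{c}) \;=\; \frac{N^3 - 4N^2 + 16N - 16 \;-\; 8(N-1)(N^2 - 6N + 6)}{8(N-1)^2(N-2)^2}.
\]
Expanding $8(N-1)(N^2 - 6N + 6) = 8N^3 - 56N^2 + 96N - 48$ and collecting terms gives the numerator $P(N) := -7N^3 + 52N^2 - 80N + 32$, over a positive denominator. Evaluating yields $P(4) = 96$, $P(5) = 57$, $P(6) = -88$, and for $N \geq 5$ one has $P'(N) = -21N^2 + 104N - 80 < 0$ (the discriminant of $P'$ is $104^2 - 4\cdot 21\cdot 80 = 4096$, with roots of $P'$ at $N = (104\pm 64)/42$, both below $5$), so $P$ is strictly decreasing on $[5,\infty)$ and therefore $P(N) < 0$ for all integers $N \geq 6$. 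Hence $\mathbf{b} - 2\mathbf{a} + \mathbf{c} > 0$ exactly when $N \in \{4,5\}$, proving (2).

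The main obstacle is essentially bookkeeping: avoiding sign errors in the expansion of $8(N-1)(N^2 - 6N + 6)$ and making sure one has a clean monotonicity argument for $P$ beyond $N = 6$, so that no further sign changes occur at larger dimensions.
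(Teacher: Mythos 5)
Your proof is correct. The paper states this lemma without supplying a proof, so there is nothing to compare it against; the direct computational route you take — reducing each expression to a single fraction over the manifestly positive denominator $(N-1)(N-2)^2$ (respectively $8(N-1)^2(N-2)^2$) and analyzing the sign of the numerator polynomial — is the natural one. I verified the arithmetic: the numerator for part (1) is indeed $N^2-6N+6$ with roots $3\pm\sqrt 3$, giving $(2\mathbf{a}-\mathbf{c})(4)=-2/12=-1/6$; for part (2) the expansion $8(N-1)(N^2-6N+6)=8N^3-56N^2+96N-48$ yields $P(N)=-7N^3+52N^2-80N+32$, with $P(4)=96$, $P(5)=57$, $P(6)=-88$, and $P'(N)=-21N^2+104N-80$ has roots $20/21$ and $4$ (discriminant $4096$), so $P$ is strictly decreasing on $[5,\infty)$, which closes the argument for integers $N\geq 4$.
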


Suppose that $(M, g)$ is a  Riemannian product $(M:=M_1\times M_2, g:=g_1+g_2)$ with $N=n_1+ n_2$. We have the following identities of its Laplacian and its Ricci and scalar curvatures: 
\begin{eqnarray*}
\Delta_g=\Delta_{g_1}+\Delta_{g_2}, &\quad&
Ric_g=Ric_{g_1}+ Ric_{g_2},\\
R_g=R_{g_1}+R_{g_2}, &\quad& |Ric_g|^2=|Ric_{g_1}|^2+|Ric_{g_1}|^2.
\end{eqnarray*}

\begin{prop}
Assume that $(M_1, g_1)$ is the Koiso-Cao soliton, and $(M_2, g_2)$ is any homogeneous Einstein manifold with  ${\rm dim}(M_2)=n_2\geq 4$ and positive scalar curvature. Therefore the $Q$-curvature of $(M, g)$ is positive.
\end{prop}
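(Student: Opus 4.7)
The strategy is to combine the product decomposition of the $Q$-curvature with the shrinking-soliton identity derived in the proof of Theorem~\ref{Theorem:Q-curvatureRicciSolitons}, and then to check positivity using the explicit Koiso-Cao data from Example 6.8 and the dimensional inequalities that hold as soon as $N=n_1+n_2=4+n_2\geq 8$.

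First, I will combine the product identities $R_g=R_{g_1}+R_{g_2}$, $|Ric_g|_g^2=|Ric_{g_1}|_{g_1}^2+|Ric_{g_2}|_{g_2}^2$ and $\Delta_g R_g=\Delta_{g_1}R_{g_1}$ with the Einstein homogeneity of $g_2$ (which gives $R_{g_2}$ constant and $|Ric_{g_2}|_{g_2}^2=R_{g_2}^2/n_2$) to rewrite \eqref{eqn: Qv3} as
\[
Q_g=-\mathbf{a}\,\Delta_{g_1}R_{g_1}+\mathbf{b}(R_{g_1}+R_{g_2})^2-\mathbf{c}\Bigl(|Ric_{g_1}|_{g_1}^2+\tfrac{R_{g_2}^2}{n_2}\Bigr),
\]
where $\mathbf{a},\mathbf{b},\mathbf{c}$ are evaluated at dimension $N$. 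Next, on the Koiso-Cao soliton ($\mu=1$, $n_1=4$) the relations $\Delta f=4-R_{g_1}$ and $\nabla R_{g_1}=2\,Ric_{g_1}(\nabla f)$, together with the contracted second Bianchi identity, yield $\Delta_{g_1}R_{g_1}=2R_{g_1}-2|Ric_{g_1}|_{g_1}^2+2\,Ric_{g_1}(\nabla f,\nabla f)$ (the same calculation that underlies Theorem~\ref{Theorem:Q-curvatureRicciSolitons}). Substituting and expanding $(R_{g_1}+R_{g_2})^2$ gives
\[
Q_g=\mathcal{K}(x_1)+2\mathbf{b}\,R_{g_1}R_{g_2}+\bigl(\mathbf{b}-\tfrac{\mathbf{c}}{n_2}\bigr)R_{g_2}^2,
\]
where $\mathcal{K}(x_1):=(2\mathbf{a}-\mathbf{c})|Ric_{g_1}|_{g_1}^2+\mathbf{b}R_{g_1}^2-2\mathbf{a}R_{g_1}-2\mathbf{a}\,Ric_{g_1}(\nabla f,\nabla f)$ depends only on $x_1\in M_1$.

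The dimensional lemma gives $2\mathbf{a}-\mathbf{c}>0$ for $N>4$, and a short algebraic check shows that $\mathbf{b}-\mathbf{c}/n_2>0$ whenever $N=4+n_2\geq 8$ (equivalently, $(N-4)(N^3-4N^2+16N-16)>16(N-1)^2$). Since $R_{g_1},R_{g_2}>0$, the last two summands above are nonnegative and it therefore suffices to establish $\mathcal{K}(x_1)>0$ at every $x_1\in M_1$, uniformly in $R_{g_2}>0$ (which may be arbitrarily small, since the conclusion is required for every admissible $g_2$). For this I will invoke the Koiso-Cao data from Example 6.8 and the soliton equation $Ric_{g_1}=g_1-\mathrm{Hess}(f)$: the scalar curvature satisfies $R_{g_1}\in[4+2c,4-2c]\subset(2,6)$; the Ricci tensor is strictly positive; the potential is $f=-cf_2^2/2$ so that $\nabla f=cf_1\,\partial_t$ vanishes precisely on the singular orbits $t=\alpha,\beta$; and $|Ric_{g_1}|_{g_1}^2=-4+2R_{g_1}+|\mathrm{Hess}(f)|_{g_1}^2\geq R_{g_1}^2/4$ by Cauchy--Schwarz for the Hessian, with equality only at $t=\alpha,\beta$.

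The pointwise verification of $\mathcal{K}>0$ then naturally splits into two cases. On the singular orbits $\nabla f=0$ and Cauchy--Schwarz is an equality, so $\mathcal{K}$ reduces to the one-variable expression $R_{g_1}\bigl[\tfrac{1}{4}(2\mathbf{a}-\mathbf{c})R_{g_1}+\mathbf{b}R_{g_1}-2\mathbf{a}\bigr]$, whose bracket is positive for $R_{g_1}\geq 4+2c>2$ and the explicit constants $\mathbf{a},\mathbf{b},\mathbf{c}$ at $N\geq 8$. On the regular set, the strict Cauchy--Schwarz gain $(2\mathbf{a}-\mathbf{c})\bigl(|Ric_{g_1}|_{g_1}^2-R_{g_1}^2/4\bigr)>0$ together with the bound $Ric_{g_1}(\nabla f,\nabla f)\leq\lambda_{\max}(Ric_{g_1})|\nabla f|_{g_1}^2\leq R_{g_1}\,c^2f_1^2$ (using $Ric_{g_1}>0$ and the explicit $f_1=-f_2f_2'$) dominate the extra negative term. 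The main obstacle will be the tightness at the singular orbits, where the estimate is delicate and relies essentially on \emph{both} dimensional improvements $2\mathbf{a}-\mathbf{c}>0$ and $\mathbf{b}-\mathbf{c}/n_2>0$; this is exactly the margin that the hypothesis $n_2\geq 4$ provides.
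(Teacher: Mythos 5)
Your proposal diverges from the paper's proof in one structural way and then hits a concrete quantitative gap.

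\textbf{The different route.} The paper does \emph{not} try to show that the $M_1$-only part of $Q_g$ is positive uniformly in $R_{g_2}$. Instead, after the same product decomposition and soliton substitution, it uses the monotonicity of $\mathbf{a}$ in $N$ (so $\mathbf{a}(4)>\mathbf{a}(N)$) and the sign change of $2\mathbf{a}-\mathbf{c}$ to dominate the $M_1$ terms by $Q_{g_1}$ evaluated with the dimension-$4$ constants, and then \emph{rescales $g_2$} so that $R_{g_2}=\tfrac{\mathbf{b}(4)}{2\mathbf{b}(N)}\max_{M_1}R_{g_1}$, which makes the cross term $2\mathbf{b}(N)R_{g_1}R_{g_2}$ absorb the $(\mathbf{b}(N)-\mathbf{b}(4))R_{g_1}^2<0$ defect. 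Positivity then follows from Theorem~\ref{thm:QKoiso-Cao_positive}. You correctly sensed that if one insists on the conclusion for \emph{every} admissible $g_2$ (including $R_{g_2}$ arbitrarily small), one must show $\mathcal{K}(x_1)>0$ pointwise, and your formula $Q_g=\mathcal{K}(x_1)+2\mathbf{b}R_{g_1}R_{g_2}+(\mathbf{b}-\mathbf{c}/n_2)R_{g_2}^2$ is correct. But this is a genuinely harder statement than what the paper proves, and your argument for it does not go through.

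\textbf{The gap.} Your handling of the singular orbit $t=\beta$ is wrong on two counts. First, the Cauchy--Schwarz for $\mathrm{Hess}(f)$ is \emph{never} an equality on the Koiso--Cao soliton: at $\beta$, $Ric_{g_1}$ has eigenvalues $1+c,1+c,1,1$ with $c\neq0$, so $\mathrm{Hess}(f)=g_1-Ric_{g_1}=\mathrm{diag}(-c,-c,0,0)$ is not proportional to $g_1$; indeed $|Ric_{g_1}(\beta)|^2-R_{g_1}(\beta)^2/4=c^2>0$ strictly. Second, even setting aside the equality claim, the bound you invoke is insufficient exactly where it matters: with $|Ric_{g_1}|^2\geq R_{g_1}^2/4$ and $Ric_{g_1}(\nabla f,\nabla f)=0$ at $\beta$,
\[
\mathcal{K}(\beta)\;\geq\;R_{g_1}(\beta)\Bigl[\tfrac14(2\mathbf{a}-\mathbf{c})R_{g_1}(\beta)+\mathbf{b}\,R_{g_1}(\beta)-2\mathbf{a}\Bigr],
\]
and for $N=8$ (so $\mathbf{a}=1/14$, $\mathbf{c}=1/18$, $\mathbf{b}=23/882$) and $R_{g_1}(\beta)=4+2c\approx2.945$, the bracket is approximately $0.0479\cdot2.945-0.143\approx-0.002<0$. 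So ``the bracket is positive for $R_{g_1}\geq4+2c>2$ and the explicit constants at $N\geq8$'' is false at the very smallest dimension your hypothesis allows, and the Cauchy--Schwarz route cannot close the argument there. What actually rescues positivity at $\beta$ is precisely the strict excess $|Ric_{g_1}(\beta)|^2-R_{g_1}(\beta)^2/4=c^2$, not the condition $n_2\geq4$ (and the inequality $\mathbf{b}-\mathbf{c}/n_2>0$ multiplies $R_{g_2}^2$, so it gives nothing in the small-$R_{g_2}$ limit you are worried about). You would need to feed in the exact singular-orbit Ricci data (as in Theorem~\ref{thm:QKoiso-Cao_positive}) and carry out an explicit, $N$-dependent verification; the proposal as written leaves that delicate step unproven and its stated justification is incorrect. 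The paper's rescaling step sidesteps all of this.
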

\begin{proof}
From hypothesis we have ${\rm dim}(M)\geq 8$. Observe that the dimensional constants ${\bf a}, {\bf b}$ and ${\bf c}$ are decreasing, as functions on $[4, \infty)$. In particular, ${\bf a}(4)>{\bf a}(N).$

Let $f$ be the Ricci potential of $(M_1, g_1)$. Since $R_{g_2}$ is constant, $\Delta_{g_1}R_{g_2}=\Delta_{g_2}R_{g_2}=0$; and $R_{g_2}^2=n_2|Ric_{g_2}|^2$. From the identity (\ref{eqn: LaplaRiccisoliton}), and Theorem \ref{thm: QcurvKoisoCao} given below, we obtain that the $Q$-curvature of $(M, g)$ satisfies:
\begin{eqnarray*}
Q_g&=&-{\bf a}(N)\Delta_gR_g+{\bf b}(N)R_g^2-{\bf c}(N) |Ric_g|^2\\
&=&-{\bf a}(N)\Delta_{g_1}R_{g_1}+{\bf b}(N)R_g^2-{\bf c}(N)|Ric_{g_1}|^2-{\bf c}(N)|Ric_{g_2}|^2\\
&=&\begin{split} 2{\bf a}(N)|Ric_{g_1}|^2-2{\bf a}(N)R_{g_1}-2{\bf a}(N)Ric_{g_1}(\nabla f, \nabla f)+{\bf b}(N) R_g^2-{\bf c}(N) |Ric_{g_1}|^2\\
-{\bf c}(N) |Ric_{g_2}|^2
\end{split}\\
&=&\begin{split}(2{\bf a} -{\bf c})(N)|Ric_{g_1}|^2-2{\bf a}(N)R_{g_1}-2{\bf a}(N)Ric_{g_1}(\nabla f, \nabla f)+{\bf b}(N) R_{g}^2\\-{\bf c}(N)|Ric_{g_2}|^2
\end{split}\\
&>&\begin{split}(2{\bf a} -{\bf c})(4)|Ric_{g_1}|^2-2{\bf a}(4)R_{g_1}-2{\bf a}(4)Ric_{g_1}(\nabla f, \nabla f)+{\bf b}(N) R_{g}^2-{\bf c}(N)|Ric_{g_2}|^2
\end{split}\\
&=&Q_{g_1}-{\bf b}(4)R_{g_1}^2+2{\bf b}(N)R_{g_1}R_{g_2}+{\bf b}(N)R_{g_1}^2+{\bf b}(N)R_{g_2}^2-\frac{{\bf c}(N)}{n_2}R_{g_2}^2.
\end{eqnarray*}
Note here that, to obtain the inequality, we are using that $(2{\bf a} -{\bf c})(4)=-1/6$, while $(2{\bf a} -{\bf c})(N)>0$ for $N>4$. Observe that for any $N\geq8$, the difference 
\begin{eqnarray*}
{\bf b}(N)-\frac{{\bf c}(N)}{n_2}={\bf b}(N)-\frac{{\bf c}(N)}{N-4}&=&\frac{N^4-8 N^3+16 N^2-48 N+48}{8 (N-4) (N-2)^2 (N-1)^2}\\
&=&\frac{ N^2(N-4)^2-48 (N-1)}{8 (N-4) (N-2)^2 (N-1)^2}
\end{eqnarray*}
is positive, since $N^2(N-4)^2>48 (N-1)$. We may rescale the metric $g_2$ so that $R_{g_2}=\frac{{\bf b}(4)}{2{\bf b}(N)}\max_{M_1}R_{g_1}$, which implies that $-{\bf b}(4)R_{g_1}^2+2{\bf b}(N)R_{g_1}R_{g_2}>0$. 
\medskip

In Theorem \ref{thm:QKoiso-Cao_positive} it is proved that $Q_{g_1}>0$. From this, we conclude that $Q_g>0$. \end{proof}
\medskip

We keep the notations of this propositoin for the next example.  
\medskip

\ex Recall that in dimension at least $6$, the coercivity of the Paneitz operator follows if both the $Q$-curvature and scalar curvature are positive, by Proposition \ref{Proposition:Coercivity}. Thus, by the previous result, the product  $(M=M_1\times M_2, g=g_1+g_2)$ of dimension $N=4+ n_2\geq 8$, has coercive Paneitz operator.
\medskip

If $\Gamma_2$ is a compact Lie group acting isometric and transitively on $(M_2, g_2)$, then U$(2)\times \Gamma_2$ acts by cohomogeneity one on $M$, and satisfies conditions \hyperref[Gamma:Cohomogeneity]{$(\Gamma1)$}--\hyperref[Gamma:MetricDecomposition]{$(\Gamma3)$}.\qed 
\newline

Apart from the previous manifolds, there are several known examples of cohomogeneity one Einstein metrics of positive scalar curvature. In the sequel of works \cite{KoSa85, KoSa88}, N. Koiso and Y. Sakane gave examples of Kähler-Einstein metrics with positive scalar curvature and arbitrary cohomogeneity. In \cite{WangZiller90}, M. Y. Wang and W. Ziller constructed Einstein manifolds with positive scalar
curvature and cohomogeneity one (or bigger), in odd-dimension. Large families of examples were given by C. Boyer, K. Galicki, and B. Mann, in \cite{BGM94} (see also the references therein). They constructed examples of compact Einstein manifolds of simply-connected compact inhomogeneous Einstein manifolds of positive scalar curvature in dimensions $4n-5$, for $n>2$.



\section{{\em Q}-curvature on Ricci solitons and coercivity of the Paneitz operator}\label{Section:Q-curvatureRicciSolitons}

Recall that a closed Riemannian manifold $(M, g)$ is said to be a Ricci soliton if there exists a function $f\in C^{\infty}(M)$ and a scalar $\mu\in\{-1, 0, 1\}$ satisfying the differential equation:
\begin{equation*}
Ric_g+\hess(f)+\mu g=0.
\end{equation*}
The Ricci soliton is called {\em shrinking}, {\em steady} or {\em expanding} if $\mu=1, 0$ or $-1$, respectively.
\newline

It is our intention to extend our results to Ricci soliton metrics, for higher order GJMS operators. The next step is to study the coercivity of the Paneitz operator. Then, in terms of Proposition \ref{Proposition:Coercivity}, it is desirable to obtain conditions on a Ricci soliton that ensure the positivity of both its $Q$-curvature and its scalar curvature. Nevertheless, we might ask for conditions that in fact could not happen. We will motivate a context where the positivity holds, in terms of general properties and rigidity results of Ricci solitons. We will subsequently consider the Koiso-Cao soliton as the leading metric where we can obtain explicit computations.
\newline

For the main properties of Ricci solitons, we refer to the survey \cite{Cao2010}, and the references therein. It is well known that closed Ricci solitons of constant scalar curvature are Einstein metrics. Even more, steady or expanding closed Ricci solitons have constant scalar curvature. With respect to the scalar curvature, it is known that Ricci soliton metrics that do not reduce as Einstein metrics must have positive scalar curvature. Thus, the class of Ricci solitons under our interest are closed shrinking Ricci solitons of positive scalar curvature. Furthermore, as it was explained in the Introduction, it is required to have radially positive Ricci curvature, otherwise the Ricci soliton might be trivial. 
\newline

For any Ricci soliton $g$ with Ricci potential $f$ and constant $\mu$, it is known that: 
\begin{equation*}
\Delta_g f=R_g+n\mu, 
\end{equation*}
and the so called {\em conservation law}:
\begin{equation}
\label{id: conlaw}
R_g+|\nabla f|^2-2\mu\cdot f=D
\end{equation}
for some constant $D$. It is also known that the Laplacian of the scalar curvature is given by:
\begin{equation}
\label{eqn: LaplaRiccisoliton}
  \Delta_g R_g=2\mu R_g-2|Ric_g|^2+2Ric_g(\nabla f, \nabla f),
\end{equation}
and that 
\[
\nabla_{\nabla f}R_g=2Ric_g(\nabla f, \nabla f).
\]
See \cite[Lemma 2.5]{PetWyl09a}.
\newline

\begin{theorem}
\label{thm: QcurvKoisoCao}
The $Q$-curvature of a shrinking Ricci soliton $(M, g)$, with Ricci potential $f$, is given by:
\[ 
Q_g=(2{\bf a}-{\bf c})|Ric_g|_g^2+{\bf b}R_g^2-2{\bf a}R-2{\bf a}\ Ric_g(\nabla f, \nabla f).
\]
\end{theorem}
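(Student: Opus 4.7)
\medskip

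The plan is to derive the formula by direct substitution, combining the general expression \eqref{eqn: Qv3} for the $Q$-curvature with the Ricci soliton identity \eqref{eqn: LaplaRiccisoliton} for $\Delta_g R_g$. No new geometric input beyond these two ingredients appears to be needed, so the argument should essentially be a one-line computation once the inputs are properly aligned.

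First I would recall the general definition of the $Q$-curvature in dimension $N\geq 4$, namely
\[
Q_g = -\mathbf{a}\,\Delta_g R_g + \mathbf{b}\,R_g^{2} - \mathbf{c}\,|Ric_g|_g^{2},
\]
with the constants $\mathbf{a},\mathbf{b},\mathbf{c}$ already fixed in the excerpt. Then, specializing the shrinking Ricci soliton identity \eqref{eqn: LaplaRiccisoliton} to $\mu=1$, I would use
\[
\Delta_g R_g = 2R_g - 2|Ric_g|_g^{2} + 2\,Ric_g(\nabla f,\nabla f).
\]

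Substituting this expression into the formula for $Q_g$ and grouping the $|Ric_g|_g^{2}$ terms gives
\[
Q_g = -\mathbf{a}\bigl[\,2R_g - 2|Ric_g|_g^{2} + 2\,Ric_g(\nabla f,\nabla f)\bigr] + \mathbf{b}\,R_g^{2} - \mathbf{c}\,|Ric_g|_g^{2},
\]
which simplifies at once to
\[
Q_g = (2\mathbf{a} - \mathbf{c})\,|Ric_g|_g^{2} + \mathbf{b}\,R_g^{2} - 2\mathbf{a}\,R_g - 2\mathbf{a}\,Ric_g(\nabla f,\nabla f),
\]
the desired identity. Since the formula is purely algebraic once \eqref{eqn: LaplaRiccisoliton} is in hand, there is no real obstacle; the only subtlety is making sure the sign conventions for $\mu$ and for the $Q$-curvature formula match those used in the paper (which they do, as $\mu = 1$ corresponds precisely to the shrinking case).
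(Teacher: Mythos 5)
Your proof is correct and follows exactly the same route as the paper: substitute the shrinking ($\mu=1$) case of the Ricci soliton identity \eqref{eqn: LaplaRiccisoliton} for $\Delta_g R_g$ into the general $Q$-curvature formula \eqref{eqn: Qv3} and collect terms. The algebra checks out and nothing further is needed.
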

\begin{proof}
By formula (\ref{eqn: LaplaRiccisoliton}), we have that if $(M, g)$ is a shrinking soliton, with $\mu=1$, 
\[
\Delta_g R_g=2R_g-2|Ric_g|_g^2+2Ric_g(\nabla f, \nabla f).
\]
If we directly substitute this into (\ref{eqn: Qv3}), on a shrinking Ricci soliton we have:
\[
Q_g=(2{\bf a}-{\bf c})|Ric_g|_g^2+{\bf b}R^2-2{\bf a}R-2{\bf a}\ Ric_g(\nabla f, \nabla f).
\]\end{proof}

Therefore, we may obtain:
\begin{prop}
Let $(M, g)$ be a shrinking Ricci soliton of dimension $N\geq 4$, with potential $f$. Suppose that it has radially positive Ricci curvature, then it has positive $Q_g$-curvature provided:
\begin{itemize}
\item For $N=4$, if $R^2>|Ric_g|_g^2+2R_g+2 Ric_g(\nabla f, \nabla f)$.

\item Or for $N>4$, if $(2{\bf a}-{\bf c})|Ric_g|_g^2>2{\bf a}\ Ric_g(\nabla f, \nabla f)>0$ and $R_g> 2{\bf a}/{\bf b}$.
\end{itemize}
\end{prop}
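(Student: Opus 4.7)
The plan is to proceed by direct substitution into the formula
\[
Q_g=(2{\bf a}-{\bf c})|Ric_g|_g^2+{\bf b}R_g^2-2{\bf a}R_g-2{\bf a}\,Ric_g(\nabla f, \nabla f)
\]
provided by Theorem \ref{thm: QcurvKoisoCao}, and to exploit the dimensional lemma stated just before the proposition, namely that $(2{\bf a}-{\bf c})(4)=-1/6$ while $(2{\bf a}-{\bf c})(N)>0$ for $N>4$. This sign change is precisely what forces the statement to split into two cases: when $N>4$ all three ``good'' terms have favorable signs and can be grouped pairwise, whereas when $N=4$ the coefficient of $|Ric_g|_g^2$ is negative and a single inequality must absorb every contribution at once.

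For $N>4$ I would rewrite
\[
Q_g=\bigl[(2{\bf a}-{\bf c})|Ric_g|_g^2-2{\bf a}\,Ric_g(\nabla f,\nabla f)\bigr]+R_g\bigl({\bf b}R_g-2{\bf a}\bigr).
\]
The first bracket is positive by the hypothesis $(2{\bf a}-{\bf c})|Ric_g|_g^2>2{\bf a}\,Ric_g(\nabla f,\nabla f)>0$, which is also where the radial positivity assumption $Ric_g(\nabla f,\nabla f)>0$ is used (and which, as noted in the remark after Theorem \ref{Theorem:Q-curvatureRicciSolitons}, prevents triviality of the soliton). The second summand is positive because $R_g>2{\bf a}/{\bf b}>0$ forces ${\bf b}R_g-2{\bf a}>0$ and $R_g>0$.

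For $N=4$ I would evaluate the dimensional constants explicitly: ${\bf a}(4)={\bf b}(4)=\tfrac16$ and ${\bf c}(4)=\tfrac12$, so that $2{\bf a}-{\bf c}=-\tfrac16$. Substitution yields
\[
Q_g=\tfrac{1}{6}\bigl[R_g^2-|Ric_g|_g^2-2R_g-2\,Ric_g(\nabla f,\nabla f)\bigr],
\]
and the stated hypothesis $R_g^2>|Ric_g|_g^2+2R_g+2\,Ric_g(\nabla f,\nabla f)$ gives $Q_g>0$ at once. There is no real obstacle: the whole proof is an algebraic manipulation of Theorem \ref{thm: QcurvKoisoCao}, the only subtlety being the need to track the sign of $2{\bf a}-{\bf c}$ as a function of $N$ in order to arrange the terms into manifestly positive groupings.
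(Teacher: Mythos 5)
Your proof is correct and follows exactly the route the paper intends: substitute into the formula
$Q_g=(2{\bf a}-{\bf c})|Ric_g|_g^2+{\bf b}R_g^2-2{\bf a}R_g-2{\bf a}\,Ric_g(\nabla f,\nabla f)$
from Theorem \ref{thm: QcurvKoisoCao}, use the sign of $2{\bf a}-{\bf c}$ (negative at $N=4$, positive for $N>4$) to split into cases, and group terms so each group is manifestly positive under the stated hypotheses. The paper itself declares the proposition a ``direct consequence of the previous results'' without writing this out, so your argument is just the honest expansion of that claim; the only thing worth flagging is that the numerical check ${\bf a}(4)={\bf b}(4)=\tfrac16$, ${\bf c}(4)=\tfrac12$ is exactly what reduces the $N=4$ case to $Q_g=\tfrac16\bigl[R_g^2-|Ric_g|_g^2-2R_g-2\,Ric_g(\nabla f,\nabla f)\bigr]$, matching the paper's computation of $Q_g$ for the Koiso--Cao soliton.
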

\medskip

\begin{proof}[Proof of Theorem \ref{Theorem:Q-curvatureRicciSolitons}]
It is a direct consequence of the previous results.
\end{proof}
\bigskip

Now we will compute the $Q$-curvature of the Koiso-Cao soliton. Recall that this soliton can be given in terms of the smooth positive solution to the equation:
\begin{equation}
\label{eqn:SolR}
2f_2f_2''+4f_2'^2-4+f_2^2(1+cf_2'^2)=0
\end{equation} 
satisfying:
\[
f_2'(\alpha)=f_2'(\beta)=0,\quad f_2(\alpha)f_2''(\alpha)=-f_2(\beta)f_2''(\beta)=-1,\quad f_2(\alpha)=\sqrt{6},\ f_2(\beta)=\sqrt{2}.
\]
From \ref{eqn:SolR}, we find that the conservation law (\ref{id: conlaw}) is equivalent to:
\begin{equation}
\label{id: conlawKC}
f'^2=2f-R_g+4+4c,
\end{equation}
where $f=-\frac{cf_2^2}{2}$ is the Ricci potential of the soliton.
\newline

\begin{lemma}
    The solution to (\ref{eqn:SolR}), $f_2:[\alpha, \beta]\to \mathbb{R}$, satisfies $f_2'^2<1/2$. 
\end{lemma}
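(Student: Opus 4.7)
The plan is to bound the nonnegative function $v := f_2'^2$ at its maximum on $[\alpha,\beta]$, then to convert that bound into a condition on the scalar curvature. First, since $v$ vanishes at both endpoints and $f_2$ is non-constant (because $f_2(\alpha) = \sqrt{6} \neq \sqrt{2} = f_2(\beta)$), the maximum of $v$ is attained at an interior point $t_0 \in (\alpha,\beta)$ with $v(t_0) > 0$, and in particular $f_2'(t_0) \neq 0$. The first-order condition $0 = v'(t_0) = 2 f_2'(t_0) f_2''(t_0)$ therefore forces $f_2''(t_0) = 0$, and substituting this into \eqref{eqn:SolR} produces the closed form $v(t_0) = (4 - f_2(t_0)^2)/(4 + c f_2(t_0)^2)$.

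Next, I would combine the scalar curvature formula from Example 6.8 with the ODE \eqref{eqn:SolR} to eliminate $f_2 f_2''$ and obtain the identity $R_g = 4 + 4c - c f_2^2 - c^2 f_2^2 f_2'^2$ valid on all of $[\alpha,\beta]$. Evaluating at $t_0$ and plugging in the closed form for $v(t_0)$, a clean cancellation yields the compact expression $R_g(t_0) = 16(1+c)/(4 + c f_2(t_0)^2)$. I then invoke the strict monotonicity of $R_g$ stated in Example 6.8 (which rests on \cite[Proposition 3.1]{TOR17}) to get $R_g(t_0) > R_g(\beta) = 4 + 2c$. Clearing denominators (legitimate because $4 + c f_2(t_0)^2 > 0$, since $c \in (-1,-1/2)$ and $f_2^2 \leq 6$) and then dividing by $c < 0$ with the accompanying flip of inequality gives $f_2(t_0)^2 > 4/(2+c)$. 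Feeding this back into the closed form for $v(t_0)$ yields $v(t_0) < 1/2$, so $f_2'^2 < 1/2$ throughout $[\alpha,\beta]$.

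The main obstacle is not algebraic---once $f_2''(t_0) = 0$ kills the second-order term the manipulations are routine---but conceptual: the two closed-form identities for $v(t_0)$ and $R_g(t_0)$ alone only yield the tautology $v(t_0) < 1/2 \iff f_2(t_0)^2 > 4/(2+c)$, and the strict monotonicity of $R_g$ is precisely the external input that breaks this tie and delivers the strict bound.
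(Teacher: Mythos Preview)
Your argument is correct in spirit and uses the same two ingredients as the paper---the vanishing $f_2''(t_0)=0$ at an interior maximum of $f_2'^2$, and the strict monotonicity of $R_g$ giving $R_g(t_0)>R_g(\beta)=4+2c$---but you take an unnecessary detour. The paper plugs $f_2''(t_0)=0$ directly into the scalar-curvature formula $R_g=4cf_2'^2+2cf_2f_2''+4$ to get $R_g(t_0)=4c\,v(t_0)+4$, so that $R_g(t_0)>4+2c$ yields $4c\,v(t_0)>2c$ and hence $v(t_0)<1/2$ in one line. Your route instead eliminates $f_2f_2''$ globally via the ODE, expresses both $v(t_0)$ and $R_g(t_0)$ in terms of $F:=f_2(t_0)^2$, and then feeds the resulting bound on $F$ back into $v(t_0)$; this is correct but circuitous, since at $t_0$ the term $f_2f_2''$ is already zero and needs no elimination.

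One small gap: your justification that $4+cF>0$ from ``$c\in(-1,-1/2)$ and $f_2^2\le 6$'' fails as stated (e.g.\ $c=-0.9$, $F=6$ gives $4+cF<0$). You need the sharper bound $c>-2/3$, which is in fact what the computation in Example~6.8 establishes (from $\xi(-2/3)>0$ and $\xi(-1/2)<0$); with $c>-2/3$ and $F\le 6$ one indeed has $4+cF>0$, and the rest of your algebra goes through.
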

\begin{proof}
The maximum value of $f_2'^2$ on $[\alpha, \beta]$ is achieved if and only if $f_2'=0$ or $f_2''=0$; both cases cannot happen simultaneously. Consider those points where $f_2''=0$. Thus, if $\tau\in[\alpha, \beta]$ is a critical point of $f_2'^2$, then:
\begin{eqnarray*}
R_g(\beta)=4+2c>R_g(\tau)&=&4cf_2'^2(\tau)+2cf_2(\tau)
f_2''(\tau)+4\\
&=&4cf_2'^2(\tau)+4.
\end{eqnarray*}
Therefore $4cf_2'^2>2c$, and since $c<-1/2$, the bound follows.\end{proof}

\bigskip


\begin{theorem}
\label{thm:QKoiso-Cao_positive}
The Koiso-Cao soliton has positive $Q$-curvature.
\end{theorem}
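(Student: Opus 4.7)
The plan is to invoke Theorem \ref{Theorem:Q-curvatureRicciSolitons} with $N=4$, which reduces the claim to verifying two conditions on the Koiso-Cao soliton: (i) radial positivity $\mathrm{Ric}_g(\nabla f,\nabla f)>0$, and (ii) the pointwise inequality
\[
R_g^2 > |\mathrm{Ric}_g|_g^2 + 2R_g + 2\,\mathrm{Ric}_g(\nabla f,\nabla f).
\]
Condition (i) is essentially free: as a nontrivial shrinking Kähler--Ricci soliton, Koiso-Cao has strictly positive Ricci curvature (Cao's theorem), and the Remark following Theorem \ref{Theorem:Q-curvatureRicciSolitons} already observes $\mathrm{Ric}_g(\nabla f,\nabla f)=(f')^2\,\mathrm{Ric}_g(\partial_t,\partial_t)>0$ wherever $f'\neq 0$ (and it vanishes at the singular orbits, where (ii) is to be checked separately anyway). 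So the substantive task is to verify (ii) everywhere on $[\alpha,\beta]$.

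To turn (ii) into a workable inequality I would use the Ricci-soliton equation $\mathrm{Ric}_g = g - \mathrm{Hess}(f)$ (with $f=-cf_2^2/2$) together with the warped-product structure of $g$ recorded in Example~6.8. A direct computation of $\mathrm{Hess}(f)$ in the orthonormal frame adapted to the cohomogeneity-one splitting gives that $\mathrm{Ric}_g$ is diagonal with eigenvalues $(p,p,q,q)$, where
\[
p = 1 + c\bigl(f_2'^2 + f_2 f_2''\bigr), \qquad q = 1 + c f_2'^2,
\]
the pairing of equal eigenvalues reflecting the Kähler symmetry coming from the Hopf fibration. Substituting into (ii) and using $R_g=2(p+q)$, $|\mathrm{Ric}_g|_g^2=2(p^2+q^2)$, and $\mathrm{Ric}_g(\nabla f,\nabla f)=c^2 f_2^2 f_2'^2\,p$ yields, after a short computation, the equivalent form
\[
3Q_g = p^2 + 4pq + q^2 - 2(p+q) - c^2 f_2^2 f_2'^2\, p.
\]
I would then use the ODE (\ref{eqn:SolR}) to eliminate $f_2 f_2''$ in favor of $u:=f_2'^2$ and $v:=f_2^2$, obtaining $p = 1+2c-cu-cv(1+cu)/2$ and a polynomial expression for $3Q_g$ in $(u,v,c)$.

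The verification of positivity then proceeds in two steps. At the singular orbits ($u=0$, $v=6$ or $v=2$) the formula collapses to $3Q_g(\alpha)=c^2-4c+2$ and $3Q_g(\beta)=c^2+4c+2$; both are positive precisely when $|c|<2-\sqrt 2$, i.e.\ when $c>\sqrt 2-2$. A direct numerical evaluation of $\xi(\sqrt 2-2)$ using the expansion in the introduction of this section shows $\xi(\sqrt 2-2)>0$, and combined with $\xi(-1/2)<0$ this sharpens the a priori estimate $-1<c<-1/2$ to $c\in(\sqrt 2-2,\,-1/2)$, yielding positivity at the singular orbits. For the interior, the bound $u<1/2$ proved in the preceding lemma, together with $v\in[2,6]$ and the one-parameter relation between $u$ and $v$ coming from the ODE, gives the remaining estimate.

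The main obstacle will be step (ii) in the interior: the boundary check at $t=\beta$ is already delicate (the value of $c$ lies barely inside the admissible range $c>\sqrt 2-2$, so $3Q_g(\beta)$ is small but positive), and it is not obvious a priori that $3Q_g$ remains positive along the full trajectory. I anticipate the clean way forward is to view $3Q_g$ as a function of $t\in[\alpha,\beta]$ along the soliton, compute $\frac{d}{dt}(3Q_g)$ using the ODE, and show that any interior critical point $t_0$ satisfies either $f_2''(t_0)=0$ or $f_2'(t_0)=0$; on each such locus the expression simplifies in a way amenable to the explicit bounds above. If a fully algebraic argument proves too heavy, a hybrid analytic-numerical verification using the sharpened range of $c$ and the monotonicity of $R_g$ (established in \cite{TOR17}) should close the gap.
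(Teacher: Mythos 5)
Your setup and boundary computations agree with the paper's: you invoke the same formula $6Q_g = R_g^2 - |\mathrm{Ric}_g|_g^2 - 2R_g - 2\,\mathrm{Ric}_g(\nabla f,\nabla f)$, compute the same Ricci eigenvalues $p=R_{11}=1+c(f_2f_2''+f_2'^2)$ and $q=R_{33}=1+cf_2'^2$, and your boundary values $3Q_g(\alpha)=c^2-4c+2$ and $3Q_g(\beta)=c^2+4c+2$ match the paper's $6Q_g(\alpha)=2(1-c)^2+4(1-c)-2$ and $6Q_g(\beta)=2(1+c)^2+4(1+c)-2$ exactly. You also correctly identify the real delicacy at $\beta$: positivity of $c^2+4c+2$ needs $c>\sqrt 2-2\approx -0.586$, which is strictly finer than the a priori bound $-1<c<-1/2$ stated in the paper (even the sharper bracket $-2/3<c<-1/2$ obtained from $\xi$), so an additional localization of $c$ is genuinely required; the paper handles this implicitly by numerically evaluating $6Q_g(\beta)\approx 0.336$.

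The gap is the interior. What you offer there is a program, not an argument: ``view $3Q_g$ as a function of $t$, study critical points, and if the algebra is too heavy fall back to a hybrid analytic-numerical check.'' The paper closes the interior with a different, self-contained chain of estimates that never analyzes critical points of $Q_g$. Starting from
\[
6Q_g = R_{33}(R_g-4) + R_{11}\bigl(R_g + 4cf_2'^2 - 2f'^2\bigr),
\]
it substitutes the conservation law $f'^2 = 2f - R_g + 4 + 4c$ to rewrite the second factor as $R_g + 4cf_2'^2 + 4cf_2'^2 f$, and then applies the uniform bounds $R_g\ge 4+2c$, $R_{33}\le 1$, $f_2^2\le 6$ (so $4cf_2'^2 f = -2c^2 f_2^2 f_2'^2 \ge -12c^2 f_2'^2$), and $f_2'^2 < 1/2$ (your Lemma). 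The first summand is bounded below by $2c$; the second is estimated using the displayed inequalities. This is a pointwise, global-in-$t$ estimate, and it leans on the monotonicity of $R_g$ (from \cite{TOR17}) only through the single lower bound $R_g\ge 4+2c$. If you want to execute your critical-point plan instead, note that an interior critical point of $Q_g\circ r$ has either $f_2'=0$ (which only happens at the endpoints) or $f_2''=0$, and on $\{f_2''=0\}$ the ODE forces an explicit relation between $u=f_2'^2$ and $v=f_2^2$; but you would still have to verify positivity along that locus, which is comparable in effort to the paper's direct estimate. So either route is workable, but as written your proposal defers the decisive step, while the paper's argument for the interior rests on the conservation law plus the four bounds above and not on tracking critical points.
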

\begin{proof}
Following equations (2.7)--(2.9) from \cite[Section 2]{TOR17}, the non-zero components of the Ricci tensor are:
\begin{eqnarray*}
    R_{11}&=&R_{22}=Ric\left(\frac{\partial }{\partial t}, \frac{\partial }{\partial t}\right)=1+c(f_2f_2''+f_2'^2),\\ R_{33}&=&R_{44}=Ric\left(\frac{Y}{f_2}, \frac{Y}{f_2}\right)=1+cf_2'^2.
\end{eqnarray*}
Here $Y$ denotes the SU$(2)-$left invariant vector field given by $
 Y(v,  w)=( w, -v),$ where $(v, w)$ are coordinates in $\mathbb{C}^2$. The radial Ricci curvature is given by:
\begin{eqnarray*}
    Ric_g(\nabla f, \nabla f)=f'^2Ric\left(\frac{\partial }{\partial t}, \frac{\partial }{\partial t}\right)=f'^2R_{11}.
 \end{eqnarray*}
 Since $R_g=2R_{11}+2R_{33}$, and $|Ric_g|_g^2=2R_{11}^2+2R_{33}^2$, the $Q$-curvature of the Koiso-Cao soliton is given by:
\begin{eqnarray*}
    6Q_g&=&R_g^2-|Ric_g|^2-2R_g-2Ric_g(\nabla f, \nabla f)\\    &=&2|Ric_g|^2+8R_{11}R_{33}-|Ric_g|_g^2-2R_g-2f'^2R_{11}\\
    &=&|Ric_g|_g^2+8R_{11}R_{33}-2R_g-2f'^2R_{11}.
\end{eqnarray*}
Furthermore, using the conservaton law (\ref{id: conlawKC}), we have:
\begin{eqnarray*}
    6Q_g &=&2 R_{11}(1+c (f_2f_2''+f_2'^2))+2R_{33}(1+c f_2'^2)+8R_{11}R_{33}-2R_g-2f'^2R_{11}\\
    &=&R_g(1+c f_2'^2)+R_{11}(2cf_2f_2''+8R_{33}-4-2f'^2)-4R_{33}\\    &=&R_gR_{33}-4R_{33}+R_{11}(2cf_2f_2''+8R_{33}-4-2f'^2)\\
&=&R_{33}(R_g-4)+R_{11}(R_g+4cf_2'^2-2f'^2)\\
&=&R_{33}(R_g-4)+R_{11}(R_g+4cf_2'^2+4cf_2'^2f)
\end{eqnarray*}
On the other hand, note that:
\[
R_{33}(R_g-4)\geq 2cR_{33}\geq2c,
\]    
Therefore we have:
\begin{eqnarray*}
    6Q_g    &\geq& 2c+R_{11}(R_g+4cf_2'^2+4cf_2'^2f)\\
    &\geq&2c+R_{11}(4+2c+4cf_2'^2-12c^2f_2'^2)>0\end{eqnarray*}
Furthermore, by evaluating at $\alpha$ and $\beta$, we obtain:
\[
6Q_g(\alpha)=2 (1-c)^2+4 (1-c)-2\approx 8.77772, \quad 6Q_g(\beta)=2 (1+c)^2+4 (1+c)-2\approx 0.335809.
\]
\end{proof}
\begin{figure}[h]
\caption{$Q$-curvature of the Koiso-Cao soliton}
\centering
\includegraphics[scale=0.85]{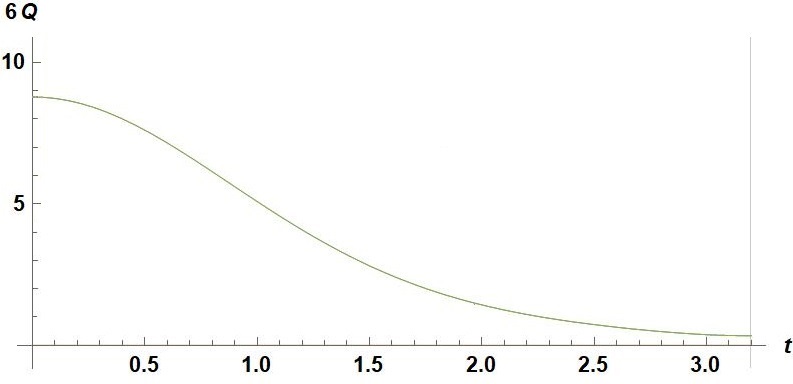}
\end{figure}

\section*{Acknowledgments}
 The authors want to thank Mónica Clapp for suggesting us to provide explicit examples of cohomogeneity Einstein manifolds having positive scalar curvature.  We also thank Matthew Gursky for pointed us out reference \cite{UhlenbeckViaclovsky2000} and for his useful comments about interior regularity for higher-order conformal operators. Finally, we want to thank Paul Yang for useful conversations that contribute to the proof of the symmetric unique continuation principle given in Lemma \ref{Lemma:UniqueContinuation}.

\bibliographystyle{plainurl}


\end{document}